\documentclass[a4paper, 11pt, twoside]{amsart}
\usepackage[top=3cm, bottom=3cm, left=2.5cm, right=2.5cm]{geometry}
\usepackage[pdftex]{graphicx}


\usepackage[utf8]{inputenc}
\usepackage[T1]{fontenc}
\usepackage[english]{babel}
\usepackage[noadjust]{cite}
\usepackage{amsthm,amssymb,epsfig,mathtools}
\usepackage{mathrsfs}  
\usepackage{bbm}        
\usepackage{esint}     
\usepackage{dsfont}
\usepackage{color}
\usepackage{enumerate}
\usepackage{url}
\usepackage{algorithm2e}
\usepackage{xcolor}

\usepackage{calc}
\usepackage[inline]{enumitem}
\usepackage[normalem]{ulem}
\usepackage{url}

\usepackage[bookmarks, bookmarksnumbered, pdfstartview={XYZ null null 1.00}]{hyperref}

\usepackage{tikz}
\usetikzlibrary{shapes, arrows, arrows.meta, decorations.markings}

\usepackage{todonotes}
\usepackage[ulem=normalem,draft]{changes}
\usepackage{cancel}

\makeindex

\numberwithin{equation}{section}

\newcommand{\theoname}{Theorem}
\newcommand{\lemmname}{Lemma}
\newcommand{\coroname}{Corollary}
\newcommand{\propname}{Proposition}
\newcommand{\definame}{Definition}

\newcommand{\remkname}{Remark}
\newcommand{\explname}{Example}

\newcommand{\R}{\mathbb{R}}

\theoremstyle{plain}
\newtheorem{theorem}{\theoname}[section]
\newtheorem{lemma}[theorem]{\lemmname}

\newtheorem{proposition}[theorem]{\propname}

\theoremstyle{definition}
\newtheorem{definition}[theorem]{\definame}
\newtheorem{remark}[theorem]{\remkname}
\newtheorem{example}[theorem]{\explname}

\newlist{hypothesis}{enumerate}{1}
\setlist[hypothesis]{label={\textup{(H\arabic*)}}, ref={(H\arabic*)}, leftmargin=*, widest*=10}

  {\list{}{\leftmargin=.5in\rightmargin=.5in}  \item[]  }%
  {\endlist}


\def\dd{{\rm d}}
\newcommand{\eqdef}{\ensuremath{\stackrel{\mbox{\upshape\tiny def.}}{=}}}

\newcommand{\norm}[1]{\left\lVert#1\right\rVert}
\newcommand{\inner}[1]{\left\langle#1\right\rangle}

\def\1B{{\bf  1}}

\def\diag{{\mathop{\rm diag}}}
\newcommand\diam{\mathop{\rm diam}}
\def\dist{{\rm dist}}

\def\dom{\mathop{{\rm dom}}}

\newcommand\Lip{\mathop{\rm Lip}}

\def\supp{\mathop{\rm supp}}

\def\id{{\mathop{\rm id}}}

\def\divv{{\mathop{\rm div}}}


\newcommand{\cvweak}[2]{\xrightharpoonup[#1]{#2}}
\newcommand{\cvstrong}[2]{\xrightarrow[#1]{#2}}


\def\inf{\mathop{\rm inf}}
\def\sup{\mathop{\rm sup}}

\def\min{\mathop{\rm min}}
\def\max{\mathop{\rm max}}

\def\argmin{\mathop{\rm argmin}}
\def\argmax{\mathop{\rm argmax}}

\newcommand{\mres}{\mathbin{\vrule height 1.6ex depth 0pt width
		0.13ex\vrule height 0.13ex depth 0pt width 1.3ex}}




\DeclareMathOperator{\Pac}{\mathscr{P}_{\mathrm{ac}}}
\def\varrhovarphi{\varrho_\varphi}
\DeclareMathOperator{\Var}{Var}
\newcommand\BL{\mathop{\rm BL}}
\def\Cvx{{\rm Cvx}}

\title[Quantitative Stability for BL and moment measures]{Quantitative Stability for the Brascamp-Lieb inequality and moment measures}

\author{João Miguel Machado}
\address{Lagrange Mathematical and Computational Center\\
103 rue de Grenelle\\
Paris, 75007, France}
\email{joao-miguel.machado@ceremade.dauphine.fr}

\author{João P. G. Ramos}
\address{Instituto Nacional de Matem\'atica Pura e Aplicada \\ Estrada Dona Castorina 110, Horto - Rio de Janeiro, RJ - Brazil}
\email{joao.ramos@impa.br}

\begin{document}

\begin{abstract} 

We develop a quantitative stability theory for moment measures based on a new sharp uniform stability principle for the Brascamp–Lieb variance inequality in terms of the $L^1$-distance. Our results yield structural stability estimates for solutions of the moment-measure problem that are uniform over a natural class of convex functions, thereby addressing several questions that have been open in this direction.

A key novelty of our approach is that the Brascamp–Lieb stability bound is not only sharp in its stability exponent, but also uniform across a broad class of convex potentials. This uniformity is absent from previous results in the literature and, beyond its intrinsic mathematical interest, it is the mechanism that allows stability of the Brascamp–Lieb inequality to transfer to nonlinear variational problems such as the moment-measure problem. We moreover show that the $L^1-$nature of this stability estimate is \emph{sharp}, in the sense that such a uniform estimate cannot hold in any $L^p$-metric, with $p >1$.

\bigskip

\noindent\textbf{Keywords.} Brascamp-Lieb inequality, moment measures, optimal transport, stability, log-concave measures

\medskip

\noindent\textbf{2020 Mathematics Subject Classification.} 
39B82, 26D10, 49Q22

\end{abstract}

\maketitle
\tableofcontents

\section{Introduction}\label{sec.introduction}

In recent years, the fields of calculus of variations and functional inequalities have become deeply interwined. If from one hand, many of the inequalities used in analysis and probability have deep variational interpretations, functional and geometric inequalities are fundamental in the study of many variational problems. To name a few exemples we can cite the use of Sobolev type inequalities in the stability of matter~\cite{lieb2010stability,lewin2022theorie}, the use of isoperimetric type inequalities in the study of shape optimization problems~\cite{lam2025stability}, and more recently the use of Brascamp-Lieb~\cite{brascamp1976extensions} and Prékopa-Leindler inequalities~\cite{prekopa1971logarithmic,prekopa_1973,leindler_1973} has become fundamental on the quantitative stability theory of optimal transport~\cite{delalande2023quantitative,letrouit2024gluing,letrouit2025lectures}. 

In this paper we address the quantitative stability of the moment-measure problem using sharp functional inequalities as our main tool -- as in, for instance,~\cite{brascamp1976extensions,cordero2015moment,santambrogio2016dealing}. Indeed, that connection between moment measures and the Brascamp--Lieb inequality has not only been made rigorous several times over (see for instance~\cite{klartag2014logarithmically} where a regularity theory for moment measures is combined with a suitable Brascamp-Lieb inequality to obtain bounds on a Poincaré constant~\cite{klartag2025isoperimetric}), but they also have a common denominator in \textit{optimal transport theory}; see~\cite{santambrogio2015optimal,villani2009optimal,ambrosio2021lectures}.

More specifically, the Brascamp-Lieb inequality is not only ubiquitous in the modern stability theory of optimal transport, but it can also be derived with linearization arguments of transportation type inequalities, as done for example in~\cite{cordero2017transport}. On the other hand, in~\cite{santambrogio2016dealing} it has been showcased how solve the moment measure problem by minimizing functionals coming from optimal transport. Indeed, we highlight, for instance, the classical work \cite{bobkov2000brunn} where a proof of Brascamp-Lieb's variance inequality with a linearization argument using Prékopa-Leindler is provided, as well as~\cite{cordero2015moment} (or Section~\ref{subsec.general_moment_measures} below), where a variational approach for moment measures is demonstrated, consisting on the maximization of a functional which is proven to be concave with the same functional inequality. 

By employing the newly developed theory of quantitative stability for the Prékopa-Leindler inequality in a series of recent works~\cite{boroczky2023quantitative,figalli2024improved,boroczky2025isoperimetric,figalli2025sharp}, we are able to obtain a \emph{sharp} stability version of the Brascamp-Lieb variance inequality, which is then used in order to produce novel explicit stability estimates for moment measures. The main features of our quantitative result is a stability constant that is completely independent of the convex potential and the fact that it holds regardless of strong convexity assumptions, allowing even for flat geometries.

\subsection{Stability of the Brascamp-Lieb variance inequality} In order to describe our main contributions, let us first recall the Brascamp-Lieb variance inequality \cite{brascamp1976extensions,bobkov2000brunn}: given a convex function $\varphi : \mathbb{R}^d \to \mathbb{R}$, let $\displaystyle \varrho \propto e^{- \varphi}$ be its associated Gibbs measure. Then for every sufficiently smooth function $f$, the following quantity 
\begin{equation}\label{eq.brascamp_lieb}
    \delta_{\BL}(f) 
    \eqdef 
    \int_{\mathbb{R}^d} 
    \inner{(D^2\varphi)^{-1}\nabla f, \nabla f}\dd \varrho
    - 
    \Var_{\varrho}(f) \ge 0
\end{equation}
is non-negative. It is well known that the optimal functions for such inequality, which make $\delta_{\BL}(f)$ vanish, are the affine functions on the geometry induced by the log-concave measure $\varrho$, \textit{i.e.} $f(x) = a\cdot\nabla \varphi(x) + b$; see for instance~\cite{cordero2017transport} or the derivation at the start of Section~\ref{sec.stability_Brascamp_Lieb}. Given the characterization of the optimizers of a functional inequality, the question of stability consists of: under which topology can we estimate the distance of $f$ to the manifold of optimizers? 

Results of this type have increasingly gained importance over the past two decades. Indeed, the question of stability has been extensively studied in the case of the isoperimetric inequality \cite{figalli_stability_2010}, as well as in other geometric inequalities such as the Faber-Krahn inequality \cite{brasco2015faber}, and, more recently, in~\cite{figalli_stability_2010,figalli2017quantitative, figalli2023sharp,figalli2024sharp,van2021sharp,van2023sharp} the Brunn-Minkowski inequality, in~\cite{dolbeault2016stability,dolbeault2025sharp,balogh2025sharp} for Sobolev and log-Sobolev inequalities and relations with non-linear evolution equations, in~\cite{boroczky2023quantitative, figalli2025sharp, figalli2024improved} for the Prékopa-Leindler and Borell-Brascamp-Lieb inequalities. 

In that vein, our first contribution is a \emph{sharp, uniform $L^1-$based stability estimate} for the Brascamp-Lieb variance inequality. In order to state it, we define the finite dimensional manifold of optimal functions 
\begin{equation}\label{eq.manifold_opt}
    \mathcal{O}_{\BL}
    \eqdef 
    \{
      f: \mathbb{R}^d \to \mathbb{R}: 
      f(x) = a\cdot\nabla\varphi(x) + b, \text{ for some }  
      a \in \mathbb{R}^d, \ b \in \mathbb{R}
    \}. 
\end{equation}
When $\varphi$ has a non-trivial domain, we set $\nabla \varphi(x) = 0$ for $x \in \mathbb{R}^d\setminus \dom \varphi$. As a result, we understand $\mathcal{O}_{\BL}$ as a subset of $L^1(\varrho)$.

Whenever $\varphi$ is sufficiently smooth, namely \textit{essentially continuous} see Definition~\ref{def.essentially_continuous} below, if $f \in \mathcal{O}_{\BL}$ then the parameter $b$ must be given by $\mathbb{E}_{\varrhovarphi}f$ since 
\[
    \mathbb{E}_{\varrhovarphi}f 
    = 
    \int_{\mathbb{R}^d} f \dd\varrhovarphi 
    = 
    a\cdot 
    \int_{\mathbb{R}^d} \nabla \varphi(x) e^{-\varphi}\dd x + b 
    = 
    a\cdot 
    \int_{\mathbb{R}^d} \nabla (e^{-\varphi})\dd x + b 
    = b.
\]
Our first main result is then an estimate of the distance of a function $f$ to the manifold of optimizers
\begin{equation}
    \text{dist}_{L^1(\varrhovarphi)}(f, \mathcal{O}_{\BL}) 
    \eqdef 
    \inf_{g \in \mathcal{O}_{\BL}} 
    \norm{f - g}_{L^1(\varrhovarphi)}
    = 
    \inf_{a \in \mathbb{R}^d} 
    \norm{f - (a\cdot\nabla\varphi + \mathbb{E}_{\varrhovarphi}f)}_{L^1(\varrhovarphi)},
\end{equation}
in terms of the deficit of the Brascamp-Lieb inequality. 

As described below, our result is valid regardless of the strong convexity of $\varphi$, this favorable geometry was the main ingredient in previous quantitative results for this inequality in the literature, see the discussion below for more details. Instead, we replace strong convexity with the geometry of the moment measure associated with $\varphi$ defined as 
\[
    \mu_\varphi \eqdef {(\nabla \varphi)}_\sharp \varrho_\varphi. 
\]
Our proof shows that as long as $\mu_\varphi$ is not concentrated in a hyperplane, we obtain information on the distance of a general function to the set $\mathcal{O}_{\BL}$. This non-concentration is equivalent to the non-negativity of the following functional 
\[
    \mu \mapsto 
    \Theta(\mu) \eqdef 
    \inf_{\theta \in \mathbb{S}^{d-1}} 
    \int_{\mathbb{R}^d} |\inner{\theta, x}|\dd \mu(x).
\]
This same functional will later play a crucial role in the stability of moment measures. 

In addition, the geometry of moment measures not only is capable of accounting for flat geometries, that is convex potential with vanishing Hessians, but also gives a stability constant that is universal for this large class of convex functions whose moment measure is not concentrated in a hyperplane. 

\begin{theorem}\label{thm.brascamp_lieb_quantitative_intro}
    There exists a universal constant $C_d$ depending only on the ambient dimension such that for any convex function $\varphi : \mathbb{R}^d \to \mathbb{R}\cup\{+\infty\}$ such that
    \[
        0 < \int_{\mathbb{R}^d} e^{-\varphi} < +\infty 
        \text{ and }
        \Theta(\mu_\varphi) > 0,
    \]
    we have that
    \[
        \dist_{L^1(\varrhovarphi)}(f, \mathcal{O}_{\BL})
        \le 
        C_d {\delta_{\BL}(f)}^{1/2}
    \]
    for all locally Lipschitz functions $f \in L^2(\varrhovarphi)$.   {Moreover, the inequality above is sharp, in the sense that the factor $\delta_{\BL}(f)^{1/2}$ \emph{cannot} be replaced by $\delta_{\BL}(f)^{\alpha}$ for any $\alpha > 1/2.$}
\end{theorem}

Regarding the scope of the class of convex functions that this result applies, we highlight that from the theory o moment measures discussed below, any essentially continuous and convex $\varphi$ will satisfy the condition $\Theta(\mu_\varphi) > 0$, but it also covers for log-concave densities supported on convex bodies presenting jumps at the boundary of their domain.

As the Brascamp-Lieb inequality is dimension-independent, the first efforts in the literature into improving it were focused on next order terms that would exploit the ambient dimension~\cite{harge2008reinforcement,bolley2018dimensional}. These results are equivalent with bounding the deficit from below with a dimensional term which vanishes in the manifold of optimizers for this inequality. A similar improvement was also obtained in~\cite{cordero2017transport}, by measuring the distance to the manifold of optimal functions with a $L^2$ term in a geometry that compensates for the curvature induced by the potential $\varphi$. This is very natural to expect for the Brascamp-Lieb inequality as it is closely related to the spectral gap of the diffusion operator induced by $\nabla \varphi$ that is central to the \textit{carré du champ method}~\cite{bakry1985diffusions,bakry2013analysis,arnaudon2018intertwinings}, see also Section~\ref{sec.stability_Brascamp_Lieb}. We refer the reader to~\cite{lam2025stability} for a particular version for the Gaussian Poincaré inequality with relations to uncertainty principles. 

The present work, on the other hand, is not only the first that proposes a quantitative stability result for the Brascamp-Lieb variance inequality in full generality with a constant \emph{independent} of $\varphi$. In particular, the independence of our stability results on the convex function turns out to be \emph{essential} in order to deduce several novel results about the stability of moment measures, as we shall see below. In addition, we crucially show show in Theorem~\ref{thm.constant_depends_pot_Lp} that this behavior is only true for the $L^1$-stability above, in the sense that any stability result for the Brascamp-Lieb inequality formulated with an $L^p$ distance to the set of optimizers must have a stability constant dependent of the convex potential. This shows that our $L^1$ result is therefore sharp with respect to the dependence of the stability constant on the convex potential. Our approach consists of constructing a one-parametric family of potentials showing that if the $L^1$ distance were to be replaced by any other $L^p$ distance for $1 < p < +\infty$, then the infimum 
\[
    \inf_{
            \substack{
                f \in L^2(\varrho_\varphi)\\ 
                \varphi \in \Cvx_d 
            }
        }
        \frac{{\delta_{\BL}(f, \varrho_\varphi)}^{1/2}}
        {\dist_{L^p(\varrho_\varphi)}(f;\mathcal{O}_{\BL}(\varphi))} = 0,
\]
where $\Cvx_d$ is a suitable space of convex functions, which is in fact much more restrictive than the class of potentials for which Theorem~\ref{thm.brascamp_lieb_quantitative_intro} holds. The proof of this identity consists of constructing a sequence of strongly convex potentials whose curvature (strong convexity parameter) vanishes asymptotically and approaches the value $0$ in the infimum above. This showcases how the this behavior of the stability constant is closely connected with the ability to deal with convex potentials having flat geometries.

Our method of proof starts with the approach of Bobkov and Ledoux \cite{bobkov2000brunn}. A main new tool in our proof is then the usage of the newly developed \emph{sharp stability estimates} for the Pr\'ekopa-Leindler inequality stemming from the work of A. Figalli, P. van Hintum and M. Tiba \cite{figalli2025sharp}. Nonetheless, the method of proof highlighted above only works a priori for a class of slightly more regular convex functions, which satisfy a lower Hessian bound. For such classes, we are able to exploit the fact that the moment measure associated with the potential $\varphi$ has full-dimensional support in order to deduce a non-degeneracy condition, which allows us to use a limiting argument to conclude in that case. 

On the other hand, in the general case where no such uniform result is available, we need an approximation result to finish the proof. Such a result employs, again, the non-degeneracy of the support of the moment measures at hand in a crucial way, which highlights how deeply our result is connected with the moment measure problem.

\subsection{Stabiliy for moment measures} We now turn to our main application: quantitative stability for the moment measure problem. Indeed, here we have a two-fold usage of stability estimates: using the Prékopa-Leindler inequality, we obtain stability of the Gibbs measures in the moment measures representation, while the quantitative version of Brascamp-Lieb yields stability of the dual potentials.

More concretely: given a convex function $\varphi$, we recall the notation of its associated Gibbs probability measure
\begin{equation}\label{eq.gibbs_measure}
  \varrho_\varphi \eqdef \frac{e^{-\varphi}}{\int_{\mathbb{R}^d}e^{-\varphi}}. 
\end{equation}
On the other hand, given $\psi$ also convex, which we will view informally as in the dual space of variables of $\varphi$ by frequently considering $\psi = \varphi^*$, we define the \emph{moment measure} associated with $\psi$ as 
\begin{equation}\label{eq.moment_measure_def}
  \mu_\psi
  \eqdef 
  {(\nabla \psi)}_\sharp \varrho_\psi. 
\end{equation} 

The moment measure problem consists of the inverse question: under what conditions on a probability measure $\mu \in \mathscr{P}_1(\mathbb{R^d})$, the space of Radon probability distributions with finite first moments, is there a convex function $\psi$ such that $\mu_\psi = \mu$? In~\cite{cordero2015moment,santambrogio2016dealing} it was proven that $\mu$ is centered at the origin and is not concentrated in a hyperplane of $\mathbb{R}^d$ if, and only if, there exists a essentially continuous (see Definition~\ref{def.essentially_continuous}) convex potential $\psi_\mu$ realizing this representation for $\mu$. In this case, $\psi_\mu$ is unique up to translations. Using the Prékopa-Leindler inequality, it was shown in~\cite{cordero2015moment} that the moment measure of $\psi_\mu$ is $\mu$ if and only if its Legendre transform $\varphi_\mu \eqdef \psi_\mu^*$ is a maximizer of 
\begin{equation}\label{eq.functional_intro} 
    \mathscr{J}_\mu(\varphi) 
    \eqdef \log \int_{\mathbb{R}^d} e^{-\varphi^*}\dd x - 
    \int_{\mathbb{R}^d} \varphi \dd \mu. 
\end{equation}

The same result was recovered in~\cite{santambrogio2016dealing} using optimal transport, but this time for a dual perspective: a measure $\varrho$ is the Gibbs measure in~\eqref{eq.moment_measure_def} if and only if it minimizes
\begin{equation}\label{eq.functional_sant_intro} 
    \mathscr{E}_\mu(\varrho) 
    \eqdef 
    \int_{\mathbb{R}^d}\log\varrho(x) \dd\varrho(x) 
    + 
    \mathcal{T}(\varrho,\mu),
\end{equation}
where $\mathcal{T}$ is the maximal correlation formulation of the quadratic optimal transport problem. The reader is referred to Sections~\ref{subsec.optimal_transport} and~\ref{subsec.general_moment_measures} for more details on optimal transport theory and for general results on moment measures. 
  
The quantitative stability question that we wish to answer then becomes: given two measures $\mu,\nu$,  quantify the distance between either $\varphi_\mu, \varphi_\nu$ or $\varrho_\mu,\varrho_\nu$. Since the variational characterization described above is necessary and sufficient, one can only expect to answer this question as an estimate on the distance to the optimal sets defined as
\begin{equation}\label{eq.set_optimalEJ}
    \mathcal{M}_\mu \eqdef \argmin \mathscr{E}_\mu, \quad 
    \mathcal{N}_\mu \eqdef \argmax \mathscr{J}_\mu,
\end{equation}
which can be characterized as follows: fixed some reference $\varrho_0 \in \mathcal{M}_\mu$ and $\varphi_0 \in \mathcal{N}_\mu$, the manifolds of optimizers can be expressed as
\begin{equation}
    \begin{aligned}
        \mathcal{M}_\mu 
        &= 
        \{
            \varrho_{0}(\cdot - x_0): 
            x_0 \in \mathbb{R}^d
        \}, \\
        \mathcal{N}_\mu 
        &= 
        \{
            \varphi_0 + (a\cdot x + b): 
            a \in \mathbb{R}^d, \ 
            b \in \mathbb{R}
        \}.
    \end{aligned}
\end{equation}
On one hand, the structure of $\mathcal{M}_\mu$ expresses the fact that moment measures are invariant w.r.t.~translations of their associated convex potential $\psi_\mu$. The structure of $\mathcal{N}_\mu$ is explained by the fact that adding an affine function to a convex potential translates its Legendre transform, that is $(\varphi_\mu + a\cdot x)^* = \psi_\mu(\cdot - a)$; the addition of the constant $b$ are then canceled out by renormalizing the respective Gibbs measures.

We start with the following Theorem, which is a backbone to the stability results that will follow.

\begin{theorem}\label{lemma.stab_momentmeasures_backbone_intro}
     There exists a universal constant $C_d$ depending only on the ambient dimension such that the following holds.  Given a Radon measure $\mu \in \mathscr{P}_1(\mathbb{R}^d)$ whose barycenter lies that the origin and $\dim \supp \mu = d$, let $\bar \varphi$ be a maximizer of $\mathscr{J}_\mu$. Then for any convex function $\varphi : \mathbb{R}^d \to \mathbb{R}\cup \{+\infty\}$ it holds that
    \begin{equation}\label{eq.backbone_stability_Gibbs_intro}
        \dist_{L^1(\mathbb{R}^d)}\left(
            \varrho_{\varphi^*},
            \mathcal{M}_\mu
        \right)
        \le C_d 
        {\left(
            \mathscr{J}_{\mu}(\bar \varphi)
            - 
            \mathscr{J}_{\mu}(\varphi)
        \right)}^{1/2}.
    \end{equation}
    
    In addition, assuming that $\mu$ has finite second moment, there exists $\lambda \in [1/4,1/2]$ such that, setting $v \eqdef \bar \varphi - \varphi$ and $\varphi_\lambda \eqdef \bar \varphi + \lambda v$, it holds that
    \begin{equation}\label{eq.backbone_stability_potentials_intro}
       \dist_{L^1(\mu_{\lambda})}\left(
            \varphi,
            \mathcal{N}_\mu
        \right)
        \le C_d \left(
            \mathscr{J}_{\mu}(\bar \varphi)
            - 
            \mathscr{J}_{\mu}(\varphi)
        \right)^{1/2},
    \end{equation}
    where $\mu_{\lambda} = \mu_{\varphi_\lambda^*}$ is the moment measure associated with the interpolation $(\varphi_\lambda)^*$. 
\end{theorem}

This result may be regarded as a Polyak-Łojasiewicz type inequality for the functional $\mathscr{J}_\mu$, as estimate~\eqref{eq.backbone_stability_potentials_intro} gives a way of using this functional to measure the distance of any $\varphi$ to the class of maximizers of $\mathscr{J}_\mu$. Estimate~\eqref{eq.backbone_stability_Gibbs_intro} relates the invariance of the moment measure representation with respect to~translations to the equality cases in the Prékopa-Leindler inequality, estimating the distance of $\varrho_{\varphi^*}$ to the set of minimizers of $\mathscr{E}_\mu$. 
On the other hand,~\eqref{eq.backbone_stability_potentials_intro} is proved by establishing a connection between the equality case of the Brascamp-Lieb inequality and the invariance of the functional $\mathscr{J}_\mu$ with respect to~the addition of affine functions, being therefore related to the stability of the Brascamp-Lieb inequality. In Example~\ref{ex.sharp_exponentPL}, we also discuss how the exponent $1/2$ is sharp. 

In fact, the second variation of the functional $\mathscr{J}_\mu$ is shown to be associated with the deficit of the Brascamp-Lieb inequality in Lemma~\ref{lemma.first_second_variation}. There is a very delicate literature around even the first variation of this functional, see the discussion from~\cite{rotem2022surface,rotem2023anisotropic}and the references therein. For our purposes we only require variations along convex interpolations, but even in this case rigorous on the second variation are unknown to the authors. We manage to establish a second variation formula when one of the end points is strongly convex, which is enough to prove~\eqref{eq.backbone_stability_potentials_intro} via an approximation argument that is also used in the proof of the quantitative stability of the Brascamp-Lieb inequality itself, further strengthening the connection between these two problems.

In spite of the fact that Theorem \ref{lemma.stab_momentmeasures_backbone_intro} provides us, by itself, with a novel quantitative way to show that almost maximizers of the functionals $\mathscr{J}_\mu$ and $\mathscr{E}_\mu$ must be close to the original potentials, we then exploit these properties to study the quantitative stability for the moment measure problem. 

\subsection{Applications} We illustrate the scope of our stability theory in three settings: compact domains, quadratic regularization, and the whole space, highlighting how Theorems \ref{thm.brascamp_lieb_quantitative_intro} and \ref{lemma.stab_momentmeasures_backbone_intro} can be utilized in order to deduce many further novel results about the stability of moment measures. These results are, in a sense, similar in spirit to the results of \cite{delalande2023quantitative} with respect to the stability of Brenier maps.

\subsubsection*{Stability in a compact domain}
Using Theorem~\ref{lemma.stab_momentmeasures_backbone_intro}, we are able to obtain several quantitative stability results for moment measures in different contexts. When the moment measure and the associated Gibbs measure in~\eqref{eq.moment_measure_def} are restricted to a compact domain we obtain a strong stability result controlling the $L^1$ distances between $\varrho_\mu, \varrho_\nu$ and $\varphi_\mu, \varphi_\nu$ with the $1$-Wasserstein distance of $\mu,\nu$; this is the content of the next result.

\begin{theorem}\label{thm.stability_compact_case_intro}
    Let $\Omega$ be a convex and compact subset of $\mathbb{R}^d$. Given two Radon measures $\mu, \nu \in \mathscr{P}(\Omega)$ whose barycenters lie at the origin and $\dim \supp \mu = \dim \supp \nu = d$, let $\varphi_\mu, \varphi_\nu$ be maximizers of $\mathscr{J}_{\mu,\Omega}, \mathscr{J}_{\nu,\Omega}$ respectively. Then, there exists a universal constant $C_{d,\Omega}$ depending only on the dimension and on the diameter of $\Omega$ such that
    \begin{equation}\label{eq.stability_Gibbs_compact_intro}
        \inf_{x_0 \in \mathbb{R}^d} 
        \norm{
            \varrho_{\varphi_\mu^*}(\cdot) - \varrho_{\varphi_\nu^*}(\cdot + x_0) 
        }_{L^1(\mathbb{R}^d)}
        \le C_{d,\Omega} {W_1(\mu,\nu)}^{1/2}
    \end{equation}
    and
    \begin{equation}\label{eq.stability_potentials_compact_intro}
        \inf_{a \in \mathbb{R}^d, \ b \in \mathbb{R}} 
        \norm{
            (\varphi_\mu - \varphi_\nu) - (a\cdot x + b) 
        }_{L^1(\mu_{\lambda})}
        \le C_{d,\Omega} {W_1(\mu,\nu)}^{1/2},
    \end{equation}
    where $\mu_\lambda = \mu_{\psi_\lambda}$ is the moment measure of $\psi_\lambda = ((1-\lambda)\varphi_\mu + \lambda\varphi_\nu)^*$ for some $\lambda \in (0,1)$.
\end{theorem}

In spite of the fact that compactness allows for a fairly simpler mathematical structure, diverging from the objects described by the existence theorem by~\cite{cordero2015moment,santambrogio2016dealing}, we remark that such a result means a remarkable control \emph{especially for numerical applications}, as we are able to control a strong norm with a distance which metrizes a fairly weak convergence of Radon measures; see Section~\ref{subsec.optimal_transport} for more details on these topologies. 

\subsubsection*{Rates of convergence for quadratic regularization}
The classical definition of moment measures from~\eqref{eq.moment_measure_def} has strong motivations from convex and toric geometry~\cite{berman2013real}, but variations thereof have recently found several applications to sampling and generative models~\cite{vesseron2025sample}. For such applications, the essential property of moment measures is their capacity of representing a given measure via the pushforward of a log-concave distribution through the gradient of the same convex potential. On the other hand, for such applications in sampling it is particularly interesting for the log-concave measure to have a strongly convex potential. 

Although this cannot be expected in general for the moment measure representation, this property can be enforced with the \textit{quadratically regularized moment measures} recently proposed in~\cite{delalande2025regularized}. That is, given $\mu \in \mathscr{P}_1(\mathbb{R}^d)$ there is a convex potential $\psi_\alpha$ such that 
\begin{equation}\label{eq.reg_moms_intro}
    \mu = {(\nabla \psi_\alpha)}_\sharp\varrho_\alpha, \text{ where }
    \varrho_\alpha \propto e^{-(\psi_\alpha + \frac{\alpha}{2}|\cdot|^2)};
\end{equation} 
see also Section~\ref{subsec.general_moment_measures}. 

By introducing the quadratic regularization, the quantitative stability investigated in~\cite{delalande2025regularized} turns out to become significantly simpler for a few reasons: first, we mention that the uniqueness modulo translations is eliminated to give way to straight-out uniqueness. This way, the quantitative stability can be understood without the need for the introduction of a distance to a certain manifold of optimizers. Secondly, adding quadratic regularization to the functional from~\eqref{eq.functional_sant_intro} makes it strongly geodesically convex in Wasserstein space. On the other hand, the stability result proven in~\cite{delalande2025regularized} is formulated with the $2$-Wasserstein distance. 

For these reasons, both of theoretical and of applied nature, obtaining explicit rates of convergence with respect to the regularization parameter is of great interest. This is achieved once again by using Theorem~\ref{lemma.stab_momentmeasures_backbone_intro}. 
\begin{theorem}\label{thm.conv_regularization_intro}
    Fix a Radon  $\mu \in \mathscr{P}_1(\mathbb{R}^d)$ whose barycenter lies at the origin and such that $\dim\supp\mu = d$. Then there exists a constant $C_{d,\mu}$ depending on the ambient dimension and on $\mu$ such that 
    \begin{equation}
        \dist_{L^1(\mathbb{R}^d)}\left(
            \varrho_\alpha,
            \mathcal{M}_{\mu}
        \right)
        \le C_{d,\mu} \alpha^{1/2}, 
    \end{equation}
    for $\varrho_\alpha$ given in~\eqref{eq.reg_moms_intro}. 

    In addition, set $\varphi_\alpha \eqdef \psi_\alpha^*$, where $\psi_\alpha$ is given in~\eqref{eq.reg_moms_intro}. Then letting $\varphi_\mu$ be the dual potential associated with the classical moment measure representation, for each $\alpha$ there exists $\lambda \in (0,1/2)$ such that, setting $v\eqdef \varphi_\alpha - \varphi_\mu$ and $\varphi_t \eqdef \varphi_\mu + \lambda v$, it holds that
    \begin{equation}
        \dist_{L^1(\mu_{\lambda})}\left(
            \varphi_\alpha,
            \mathcal{N}_{\mu}
        \right)
        \le C_{d,\mu} \alpha^{1/2},
    \end{equation}
    where $\mu_{\lambda} = \mu_{\varphi_\lambda^*}$ is the moment measure associated with the interpolation $(\varphi_\lambda)^*$. 
\end{theorem}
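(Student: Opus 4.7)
The plan is to deduce Theorem~\ref{thm.conv_regularization_intro} from the backbone stability result Theorem~\ref{lemma.stab_momentmeasures_backbone_intro} by showing that the deficit $\mathscr{J}_\mu(\bar\varphi) - \mathscr{J}_\mu(\psi_\alpha)$ is of order $\alpha$. Taking square roots in the two conclusions of Theorem~\ref{lemma.stab_momentmeasures_backbone_intro} applied at $\varphi=\psi_\alpha$ then immediately produces the $\alpha^{1/2}$ rates claimed in the statement.

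The first step is to identify $\psi_\alpha$ as the correct test function. By construction, $\psi_\alpha$ is convex and its Legendre conjugate satisfies $\psi_\alpha^{*} = \varphi_\alpha + \tfrac{\alpha}{2}|\cdot|^2$ up to an additive constant, so $\varrho_{\psi_\alpha^{*}} = \varrho_\alpha$ as probability measures. Consequently, the first conclusion of Theorem~\ref{lemma.stab_momentmeasures_backbone_intro} literally reads $\dist_{L^1(\mathbb{R}^d)}(\varrho_\alpha,\mathcal{M}_\mu) \le C_d(\mathscr{J}_\mu(\bar\varphi) - \mathscr{J}_\mu(\psi_\alpha))^{1/2}$, while its second conclusion bounds $\dist_{L^1(\mu_\lambda)}(\psi_\alpha,\mathcal{N}_\mu)$ by the same quantity; combined with the renaming $\varphi_\alpha\eqdef\psi_\alpha$ of the statement, these match the two assertions of Theorem~\ref{thm.conv_regularization_intro} verbatim.

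The quantitative heart is thus the $O(\alpha)$ bound on the deficit. The variational construction of the quadratically regularized moment measure in~\cite{delalande2025regularized} realizes $\psi_\alpha$ as the maximizer of a perturbed functional $\mathscr{J}_\mu^\alpha$ obtained from $\mathscr{J}_\mu$ by incorporating the quadratic shift $\tfrac{\alpha}{2}|\cdot|^2$ in the log-partition integral. A first-order expansion in $\alpha$ of this modification yields an estimate of the form $|\mathscr{J}_\mu^\alpha(\eta) - \mathscr{J}_\mu(\eta)| \le C(\eta)\,\alpha$ for any convex $\eta$ such that $\varrho_{\eta^{*}}$ has a finite second moment. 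Chaining the optimality of $\bar\varphi$ for $\mathscr{J}_\mu$ with that of $\psi_\alpha$ for $\mathscr{J}_\mu^\alpha$ and applying this estimate at $\eta\in\{\bar\varphi,\psi_\alpha\}$ produces $0 \le \mathscr{J}_\mu(\bar\varphi) - \mathscr{J}_\mu(\psi_\alpha) \le C_{d,\mu}\,\alpha$.

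The main obstacle, and the origin of the $\mu$-dependence of $C_{d,\mu}$, is ensuring that the constant $C(\eta)$ can be chosen \emph{uniformly} over $\eta\in\{\bar\varphi\}\cup\{\psi_\alpha : \alpha\in(0,1]\}$. This requires a priori second-moment bounds on the Gibbs measures $\varrho_{\bar\varphi^{*}}$ and $\varrho_\alpha$ that do not blow up as $\alpha\to 0$, which is precisely where the hypotheses $\dim\supp\mu = d$ and $\mu\in\mathscr{P}_1(\mathbb{R}^d)$ enter: they prevent the regularized potentials from flattening out or concentrating on a lower-dimensional subspace, and govern the dependence of the final constant on $\mu$ through geometric quantities associated with $\supp\mu$.
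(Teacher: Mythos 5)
Your overall strategy is correct and follows the same two-step structure as the paper: plug the regularized maximizer into Theorem~\ref{lemma.stab_momentmeasures_backbone_intro} and show the deficit $\mathscr{J}_\mu(\bar\varphi)-\mathscr{J}_\mu(\varphi_\alpha)$ is $O(\alpha)$. Where you diverge is in the mechanism for that $O(\alpha)$ bound. The paper passes through strong duality: it writes $\mathscr{J}_\mu(\bar\varphi)-\mathscr{J}_{\mu,\alpha}(\varphi_\alpha)=\mathscr{E}_{\mu,\alpha}(\varrho_\alpha)-\mathscr{E}_\mu(\varrho)\le\mathscr{E}_{\mu,\alpha}(\varrho)-\mathscr{E}_\mu(\varrho)=\tfrac{\alpha}{2}M_2(\varrho)$, an exact identity in the primal requiring no approximation. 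You propose to stay entirely in the dual and bound $\mathscr{J}_\mu(\eta)-\mathscr{J}_{\mu,\alpha}(\eta)$ by a ``first-order expansion''; this can be made rigorous via Jensen, since $\mathscr{J}_\mu(\eta)-\mathscr{J}_{\mu,\alpha}(\eta)=-\log\int e^{-\tfrac{\alpha}{2}|x|^2}\,\dd\varrho_{\eta^*}\le\tfrac{\alpha}{2}M_2(\varrho_{\eta^*})$. Both routes are sound; the paper's avoids the Jensen step at the cost of invoking the duality identity in Theorem~\ref{thm.characterization_moment_measures}.

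One point to streamline, though: your concern in the last paragraph about uniformity of $C(\eta)$ over the family $\{\bar\varphi\}\cup\{\psi_\alpha:\alpha\in(0,1]\}$ is unnecessary. The cleanest chain only ever applies the approximation estimate at the single function $\eta=\bar\varphi$:
\[
\mathscr{J}_\mu(\bar\varphi)-\mathscr{J}_\mu(\varphi_\alpha)
\le
\mathscr{J}_\mu(\bar\varphi)-\mathscr{J}_{\mu,\alpha}(\varphi_\alpha)
\le
\mathscr{J}_\mu(\bar\varphi)-\mathscr{J}_{\mu,\alpha}(\bar\varphi)
\le
\tfrac{\alpha}{2}M_2(\varrho),
\]
where the first step uses $\mathscr{J}_{\mu,\alpha}\le\mathscr{J}_\mu$ pointwise (so the term $\mathscr{J}_{\mu,\alpha}(\varphi_\alpha)-\mathscr{J}_\mu(\varphi_\alpha)\le 0$ is simply dropped) and the second uses optimality of $\varphi_\alpha$. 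Thus no bound on $M_2(\varrho_\alpha)$ is needed. The constant $C_{d,\mu}$ depends on $\mu$ only through $M_2(\varrho_\mu)$, which is finite simply because $\varrho_\mu$ is log-concave --- exactly as the paper notes after the statement. The hypotheses $\dim\supp\mu=d$ and $\mu$ centered in $\mathscr{P}_1$ are there to guarantee that the moment measure representation exists (via~\cite{cordero2015moment,santambrogio2016dealing}), not to prevent any blow-up of constants as $\alpha\to 0$.
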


\subsubsection*{Stability in $\mathbb{R}^d$}
To tackle the stability question in $\mathbb{R}^d$ we can in principle combine the quantitative stability of regularized moment measures arguments based on strong geodesic convexity from~\cite{delalande2025regularized} with our explicit rates of convergence~\ref{thm.conv_regularization_intro}. But, in order to achieve this, we have to make quantitative the assumption that $\mu$ is not supported on a hyperplane, otherwise such a result would allow for the construction of a $\psi$ yielding~\eqref{eq.moment_measure_def} for such a singular $\mu$, contradicting the existence theory of~\cite{cordero2015moment,santambrogio2016dealing}. See Proposition~\ref{prop.constant_depends_vartheta} for more details on this construction. 

Our approach is then to either impose a \emph{uniform lower bound} on the geometric quantity that was already used in the stability of Brascamp-Lieb inequality
\[
    \Theta(\mu) \eqdef 
    \inf_{\theta \in \mathbb{S}^{d-1}} 
    \int_{\mathbb{R}^d} |\theta\cdot y|\dd \mu(y) \ge \vartheta,
\]
or to control the Hessian of the log-density of $\mu$. This is done by defining the two classes 
\begin{align*}
    \mathcal{K}_{\vartheta} 
    &\eqdef 
      \left\{
        \mu \in \mathscr{P}(\Omega) : 
        \begin{array}{c}
            \mu \text{ is centered, } \\ 
            \displaystyle
            \Theta(\mu) \ge \vartheta
        \end{array} 
      \right\} \text{ for a compact $\Omega$,}\\
    \mathcal{K}_\Lambda 
    &\eqdef 
    \left\{
        \mu \in \Pac(\mathbb{R}^d) 
        : 
        \begin{array}{c}
            \mu \propto e^{-V} \text{ is centered, } \\ 
            \displaystyle
            D^2V \le \Lambda \id
        \end{array}
    \right\}.
\end{align*}
The results we obtain can be summarized as follows, where the reader is referred to Section~\ref{subsec.stabilityRd} for more information.

\begin{theorem}\label{thm.stability_moment_measures_Rd_intro}
    Take $\mathcal{K}$ to be either $\mathcal{K}_\vartheta$ or $\mathcal{K}_\Lambda$. Then for each $p>2$ there exists a constant $C$ such that, for all $\mu, \nu \in \mathcal{K}$ it holds thats
    \begin{equation}\label{eq.stability_moment_measures_Rd_intro}
        \dist_{W_2}
        \left(
            \mathcal{M}_\mu,
            \mathcal{M}_\nu
        \right)
        \le 
        C {W_2(\mu, \nu)}^{\frac{p-2}{6p-4}}, 
    \end{equation}
    where $C$ grows linearly in $p$.  
\end{theorem}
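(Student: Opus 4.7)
The plan is to interpolate through the quadratically regularized moment measures $\varrho_\alpha^{\mu}$ and $\varrho_\alpha^{\nu}$ from~\eqref{eq.reg_moms_intro}, with a regularization parameter $\alpha > 0$ to be optimized at the very end. For fixed $\alpha$, Theorem~\ref{thm.conv_regularization_intro} applied to $\mu$ and $\nu$ yields
\[
\dist_{L^1(\mathbb{R}^d)}(\varrho_\alpha^\mu, \mathcal{M}_\mu) \le C_{\mathcal{K}}\, \alpha^{1/2}, \qquad \dist_{L^1(\mathbb{R}^d)}(\varrho_\alpha^\nu, \mathcal{M}_\nu) \le C_{\mathcal{K}}\, \alpha^{1/2},
\]
where the constant is \emph{uniform} over $\mathcal{K}$. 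This is exactly where the assumptions $\Theta(\mu) \ge \vartheta$ (resp.~$D^2 V \le \Lambda\Id$) enter, by forcing a quantitative version of the constant $C_{d,\mu}$ appearing in Theorem~\ref{thm.conv_regularization_intro} along the lines of Proposition~\ref{prop.constant_depends_vartheta}.

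I would then upgrade these $L^1$ bounds to $W_2$ bounds via a moment interpolation
\[
    W_2(f, g)^2 \le C_p \bigl(M_p(f) + M_p(g)\bigr)^{2/p}\, \|f - g\|_{L^1(\mathbb{R}^d)}^{(p-2)/p},
\]
valid for every $p>2$ with $C_p$ growing linearly in $p$. In each class $\mathcal{K}$ uniform $L^p$ moment bounds on $\varrho_\alpha^\mu$ are at our disposal (compact support in $\mathcal{K}_\vartheta$; subgaussian tails in $\mathcal{K}_\Lambda$ coming from the $\alpha$-strongly convex regularized potential $\varphi_\alpha + \tfrac{\alpha}{2}|\cdot|^2$), which produce
\[
    \dist_{W_2}(\varrho_\alpha^\mu, \mathcal{M}_\mu) \le C_p\, \alpha^{(p-2)/(4p)},
\]
and analogously for $\nu$.

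The complementary estimate is the quantitative stability for regularized moment measures from~\cite{delalande2025regularized}, which is driven by the $\alpha$-geodesic convexity in $(\mathscr{P}(\mathbb{R}^d), W_2)$ of the regularized counterpart of $\mathscr{E}_\mu$. Writing this convexity inequality at $\varrho_\alpha^\nu$ with reference point the unique minimizer $\varrho_\alpha^\mu$, and controlling the variation of the transport term via $|\mathcal{T}(\varrho, \mu) - \mathcal{T}(\varrho, \nu)| \lesssim M_2(\varrho)^{1/2}\, W_2(\mu, \nu)$, one arrives at
\[
    W_2(\varrho_\alpha^\mu, \varrho_\alpha^\nu) \le \frac{C}{\alpha^{1/2}}\, W_2(\mu, \nu)^{1/2}.
\]
Combining with the triangle inequality
\[
    \dist_{W_2}(\mathcal{M}_\mu, \mathcal{M}_\nu) \le 2\,C_p\, \alpha^{(p-2)/(4p)} + \frac{C}{\alpha^{1/2}}\, W_2(\mu, \nu)^{1/2},
\]
and optimizing in $\alpha$ by balancing the two summands at $\alpha \sim W_2(\mu, \nu)^{2p/(3p-2)}$ produces the claimed exponent $(p-2)/(6p-4)$ with an overall constant linear in $p$.

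The main obstacle I anticipate is the \emph{uniformity of all the constants} across the whole class $\mathcal{K}$: the constant of Theorem~\ref{thm.conv_regularization_intro}, the $p$-th moment bounds on $\varrho_\alpha^\mu$ at every scale of $\alpha$, and the stability constant from Delalande's argument must each be shown to depend on $\mu$ only through the ambient dimension and the class parameter ($\vartheta$ or $\Lambda$). Compactness and Proposition~\ref{prop.constant_depends_vartheta} handle $\mathcal{K}_\vartheta$ in a rather direct way; in $\mathcal{K}_\Lambda$ one must instead exploit the subgaussian concentration granted by the quadratic regularization to keep all the constants $\alpha$-independent, which I expect to be the most technical ingredient.
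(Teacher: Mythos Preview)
Your overall architecture matches the paper's: triangle inequality through the regularized measures, Delalande--Farinelli's $\alpha$-strong convexity for the middle term, Theorem~\ref{thm.conv_regularization_intro} plus an $L^1$-to-$W_2$ interpolation for the outer terms, and optimization in $\alpha$. The exponent computation is also correct. The gap is precisely where you locate it---in the uniform moment bounds---but your proposed mechanisms for obtaining them do not work.

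For $\mathcal{K}_\vartheta$: the Gibbs measures $\varrho_\alpha^\mu$ are \emph{not} compactly supported; it is $\mu$ that lives in $\Omega$, while $\varrho_\alpha^\mu$ is a log-concave density on all of $\mathbb{R}^d$. What the compactness of $\Omega$ buys you is that $\nabla\varphi_\alpha^*$ (the Brenier map to $\mu$) takes values in $\Omega$, so $\varphi_\alpha^*$ is $\diam(\Omega)$-Lipschitz. This alone does not control moments; one must combine it with the lower bound $\Theta(\mu)\ge\vartheta$ through Klartag's growth lemma (Lemma~\ref{lemma.klartag_growth_convex}), which then yields a linear lower bound on $\varphi_\alpha^*$ and hence uniform $p$-moments. (Proposition~\ref{prop.constant_depends_vartheta} is the \emph{negative} statement that the constant cannot be taken independent of $\vartheta$, so it is not the tool you want here.)

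For $\mathcal{K}_\Lambda$: relying on the $\alpha$-strong convexity of $\varphi_\alpha^*+\tfrac{\alpha}{2}|\cdot|^2$ gives subgaussian tails with variance proxy $\alpha^{-1}$, so $M_p(\varrho_\alpha^\mu)^{1/p}$ scales like $\alpha^{-1/2}$. Feeding this into your interpolation inequality turns the term $\alpha^{(p-2)/(4p)}$ into something no better than a constant, and the optimization collapses. The paper's resolution is a genuinely new ingredient: a bootstrap via Caffarelli's contraction theorem (Theorem~\ref{thm.regularity_KLambda}) shows that $\varphi_\alpha^*$ is $\Lambda^{-1/3}$-strongly convex \emph{uniformly in $\alpha$}. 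The idea is that Caffarelli applied once gives $\Lip(\nabla\varphi_\alpha)\le(\Lambda/\alpha)^{1/2}$, hence $\varphi_\alpha^*$ is $(\alpha/\Lambda)^{1/2}$-strongly convex, which improves the strong convexity of the regularized potential; iterating, the exponents converge and the $\alpha$-dependence disappears. Without this, the $\mathcal{K}_\Lambda$ case does not go through.
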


In both cases, our approach is to control the moments of order $p$ of the Gibbs measures $\varrho_\mu$, uniformly for all elements $\mu$ of the classes  $\mathcal{K}_\vartheta$ and $\mathcal{K}_\Lambda$. With this finer information we can come back to the strong geodesic convexity arguments for regularized moment measures from~\cite{delalande2025regularized} and obtain an optimal dependence on $\alpha$ in the quantitative stability results in this case. For the first class $\mathcal{K}_\vartheta$, this can be done with a Lemma~\ref{lemma.klartag_growth_convex} due to Klartag. For the class $\mathcal{K}_\Lambda$, similar controls on the $p$-moments can be obtained with a regularity result of moment measures, namely
\begin{equation}\label{eq.strong_convexity_moms}
    \mu \propto e^{-V} = {\left(\nabla\psi\right)}_\sharp e^{-\psi}, \text{ such that }
    D^2V \le \Lambda \id, 
    \text{ then $\psi$ is $\Lambda^{-1}$-strongly convex.}
\end{equation}

The proof of~\eqref{eq.strong_convexity_moms} is done in Theorem~\ref{thm.regularity_KLambda}, and is based on a bootstrap argument using Caffarelli's contraction theorem~\cite{caffarelli1992regularity,caffarelli2000monotonicity}. It states that if $\mu$ is as in~\eqref{eq.strong_convexity_moms} and $\nu \propto e^{-W}$ with $D^2 W \ge \lambda \id$, then the optimal transportation map $T$ from $\mu$ to $\nu$ is globally Lipschitz with $\Lip T \le \sqrt{\Lambda/\lambda}$. Since we cannot know in principle that $\psi$ is strongly convex, what we do instead is consider the quadratic regularized moment measure presentation of $\mu$. In this case, the Gibbs measure has indeed a strongly convex potential, with a modulus of convexity $\alpha$. This modulus of convexity can be iteratively increased with successive applications of the contraction theorem, which yields $\Lambda^{-1}$ at the end. Besides the application to the stability of moment measures, we believe that this result might have important applications in sampling; see the discussion after the proof of Theorem~\ref{thm.regularity_KLambda}.

\subsection*{LLM usage}

Some words about the usage of Large Language Models are in order. Indeed, the first version of this manuscript was elaborated essentially completely without the assistance of LLMs, except for the correction of a few typos and misprints. In this updated version, however, LLMs played a significant role: first and foremost, the example yielding the sharpness of the $L^1$ norm in Theorem \ref{thm.brascamp_lieb_quantitative_intro} was achieved by an interaction of the second named author with GPT 5.4. The proof of such a counterexample was later rewritten and formalized into Lean in the $L^2$ case (see \href{https://github.com/joaopgramos95/CounterExamplePotentialBrascampLieb.git}{https://github.com/joaopgramos95/CounterExamplePotentialBrascampLieb.git}) with the aid of Claude Opus 4.7. Other than that, the final wording and insertion of such arguments into the text was heavily modified and edited by the authors. We take full responsbility for this paper's contents.

\section*{Acknowledgments}
During the final preparation steps of the present work we have discovered that other colleagues were simultaneously working on similar questions on the stability of moment measures~\cite{bonnet2026semidiscrete}, obtaining results similar to ours in terms of stability in the specific case of the moment measure problem, but with a particular focus on semi-discrete approximations and applications in Kähler—Ricci solitons. We wish to thank them, specially Guillaume Bonnet, for clarifying discussions and sharing their beautiful preliminary results with us. 

The first author wishes to thank Quentin Mérigot and Filippo Santambrogio for stimulating discussions about moment measures. He also warmly thanks the support of the Lagrange Mathematical and Computational Research Center. Finally, both authors would like to thank Alessio Figalli and Károly Böröczky for insightful comments and suggestions on preliminary versions of this work. 

\section{Some tools on optimal transport and functional inequalities}\label{sec.tools}  

\subsection{A primer on optimal transport and Wasserstein distances}\label{subsec.optimal_transport}

The Wasserstein distances are defined via the value function of the optimal transport problem as 
\begin{equation}\label{eq.Wasserstein_distances}
    W_p^p(\mu,\nu) 
    \eqdef 
    \min_{\gamma \in \Pi(\mu,\nu)} 
    \int_{\mathbb{R}^d\times\mathbb{R}^d} 
    |x-y|^p\dd \gamma(x,y), 
\end{equation}
where $\Pi(\mu,\nu)$ the set of probability measures over $\mathbb{R}^d\times\mathbb{R}^d$ whose marginals are respectively $\mu$ and $\nu$, the so called set o transportation plans. This quantity is finite if and only if $\mu$ and $\nu$ have finite $p$-moments, $M_p(\mu),M_p(\nu) < +\infty$, where 
\begin{equation}\label{eq.p_moment}
    M_p(\mu) \eqdef \int_{\mathbb{R}^d} |x|^p\dd \mu. 
\end{equation}
The reader is referred to~\cite{santambrogio2015optimal,villani2009optimal} for a complete introduction to optimal transport and Wasserstein distances, in what follows we introduce its properties that will be useful in the sequel.

We are particularly interested in the case $p = 2$, where the optimal transport problem~\ref{eq.Wasserstein_distances} has a close relation with convex analysis. This can be easily seen by introducing the \textit{maximal correlation formulation}
\begin{equation}\label{eq.max_correlation}
  \mathcal{T}(\mu, \nu) 
  = 
  \sup_{\gamma \in \Pi(\mu, \nu)} 
  \int_{\mathbb{R}^d\times\mathbb{R}^d}\inner{x,y}\dd \gamma
  = 
  \inf_{\varphi \text{ convex }} 
  \int_{\mathbb{R}^d} \varphi\dd \mu
  + 
  \int_{\mathbb{R}^d} \varphi^* \dd \nu,
\end{equation} 
in such a way that 
\[
    \frac{1}{2}W_2^2(\mu,\nu) 
    = 
    \frac{1}{2}M_2(\mu) + \frac{1}{2}M_2(\nu) 
    - 
    \mathcal{T}(\mu, \nu). 
\]
The infimum on the RHS is attained by a convex function $\varphi$, and Brenier's Theorem states that the optimal transportation plan is unique and given by $\gamma = (\mathrm{id}, \nabla\varphi)_\sharp \mu$, whenever $\mu$ is absolutely continuous w.r.t.~the Lebesgue measure. 

\subsubsection*{Topological properties of Wasserstein distances}
A very important property of Wasserstein distances is that they metrize the narrow topology of probability measures. A sequence of Radon probability measures ${\left(\mu_n\right)}_{n \in \mathbb{N}}$ converges narrowly to $\mu$ if for all $f \in \mathscr{C}_b(\mathbb{R}^d)$, continuous and bounded function, it holds that
\[
    \int_{\mathbb{R}^d}f\dd\mu_n 
    \cvstrong{n \to \infty}{}
    \int_{\mathbb{R}^d}f\dd\mu,
\]
and we write $\mu_n \cvweak{n \to \infty}{}\mu$. A very important property is that 
\begin{equation}
    W_p(\mu_n,\mu) \cvstrong{n \to \infty}{}0 
    \text{ if and only if } 
    \begin{cases}
        \mu_n \cvweak{n \to \infty}{}\mu,& \\ 
        M_p(\mu_n) \cvstrong{n \to \infty}{} M_p(\mu).  
    \end{cases}
\end{equation}
In addition, the space of Radon probability measures with finite $p$-moments $\mathscr{P}_p(\mathbb{R}^d)$ becomes itself complete and separable when endowed with the topology of $W_p$.

This is conceptually very important for the contributions of this work since many of the quantitative stability results regarding moment measure representations from Section~\ref{sec.stability_moment_measures} are stated with respect to the Wasserstein distance $W_2$. This allows to compare very singular objects with a weak topology. On the other hand, if we know that two probability measures are also in $L^1(\mathbb{R}^d)$ and have finite $p$-moments, then we can interpolate the Wasserstein distance with the $L^1$ distance via the following inequality. 
\begin{lemma}\label{lemma.bound_wasserstein_momentsL1}
    Let $\mu,\nu$ be two probability densities in $L^1(\mathbb{R}^d)$ with finite $p$-moments, for some $p > 1$. Then, for all $1 \le q < p$ there exists a constant $C_{p,q}$ such that
    \begin{equation}
        W_q(\mu,\nu) 
        \le 
        C_{p,q} {(M_p(\mu) + M_p(\nu))}^{1/p} 
        \norm{\mu - \nu}_{L^1}^{1/q - 1/p}. 
    \end{equation}
\end{lemma}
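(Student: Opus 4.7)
The plan is to construct an explicit (sub-optimal) transport plan between $\mu$ and $\nu$ out of the Hahn-type decomposition of $\mu - \nu$, and then interpolate the resulting cost between the $L^1$ scale and the $p$-moment scale by H\"older's inequality.

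More concretely, I would first decompose $\mu - \nu = (\mu-\nu)^+ - (\nu-\mu)^+$ pointwise; since both $\mu$ and $\nu$ integrate to one, the two nonnegative pieces have equal total mass $\alpha \eqdef \tfrac{1}{2}\|\mu-\nu\|_{L^1}$. Assuming $\alpha > 0$ (otherwise the inequality is trivial), define
\[
    \gamma
    \eqdef
    (\mathrm{id}, \mathrm{id})_\sharp (\mu \wedge \nu)
    +
    \alpha^{-1}\, (\mu-\nu)^+ \otimes (\nu-\mu)^+,
\]
keeping the common mass $\mu\wedge\nu$ fixed along the diagonal and moving the remainder through the product coupling. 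A direct check shows that the first marginal equals $(\mu\wedge\nu) + (\mu-\nu)^+ = \mu$ and symmetrically for $\nu$, hence $\gamma \in \Pi(\mu,\nu)$, and therefore $W_q^q(\mu,\nu) \le \int |x-y|^q\,\dd\gamma$.

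The second step is to bound the cost of this plan. The diagonal part contributes nothing; for the off-diagonal tensor product I would use $|x-y|^q \le 2^{q-1}(|x|^q + |y|^q)$ (valid since $q \ge 1$) and integrate one variable at a time, so that each of the two resulting single integrals supplies a factor $\alpha$ cancelling the prefactor $\alpha^{-1}$. This leaves
\[
    W_q^q(\mu,\nu)
    \le
    2^{q-1} \int_{\mathbb{R}^d} |x|^q\, |\mu-\nu|(x) \,\dd x.
\]
Since $q < p$, I would then apply H\"older's inequality with conjugate exponents $p/q$ and $p/(p-q)$ to the integrand $|x|^q\cdot |\mu-\nu|^{q/p}\cdot |\mu-\nu|^{(p-q)/p}$, getting
\[
    \int |x|^q\, |\mu-\nu|
    \le
    \Bigl(\int |x|^p |\mu-\nu|\Bigr)^{q/p}
    \|\mu-\nu\|_{L^1}^{(p-q)/p}
    \le
    (M_p(\mu)+M_p(\nu))^{q/p}\, \|\mu-\nu\|_{L^1}^{(p-q)/p}.
\]
Taking the $q$-th root and recalling $(p-q)/(pq) = 1/q - 1/p$ yields the claimed inequality with $C_{p,q} = 2^{1-1/q}$.

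No serious obstacle is expected: the argument is essentially bookkeeping. The only care I would need is to verify the marginals of $\gamma$ rigorously (which becomes transparent once $\mu\wedge\nu$ is written as $\mu - (\mu-\nu)^+$) and to handle the degenerate case $\alpha = 0$, where $\mu = \nu$ almost everywhere and both sides vanish.
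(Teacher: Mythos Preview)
Your argument is correct. The coupling you build---diagonal transport on the common mass $\mu\wedge\nu$ plus the product coupling on the residuals---is exactly the one the paper constructs. The difference lies only in how the off-diagonal cost is estimated: the paper truncates at a radius $R$, bounds the contribution on $B_R\times B_R$ by $(2R)^q\alpha$ and the complement by $2^{q-1}R^{q-p}(M_p(\mu)+M_p(\nu))$ via the tail inequality $|x|^q\le R^{q-p}|x|^p$, and then optimizes over $R$. Your route via H\"older with exponents $p/q$ and $p/(p-q)$ is shorter, avoids the auxiliary parameter entirely, and yields the explicit constant $C_{p,q}=2^{1-1/q}$; the paper's approach, by contrast, leaves the constant implicit in the optimization step. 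Both methods are standard interpolation devices and produce the same exponents, so the distinction is one of presentation rather than substance.
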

\begin{proof}
    Suppose that $\mu,\nu$ have densities $f,g$, set $\varepsilon \eqdef \norm{f - g}_{L^1(\mathbb{R}^d)}$, and define the common density and the residuals $f_r,g_r$ as 
    \[
        h(x) 
        \eqdef 
        \min\{f(x), g(x)\}
        \ 
        f_r\eqdef f - h,
        \ 
        g_r \eqdef g - h.
    \]
    As a result, we observe that 
    \[
        \int_{\mathbb{R}^d} f_r(x) \dd x 
        +
        \int_{\mathbb{R}^d} g_r(x) \dd x 
        = 
        \varepsilon, 
        \quad 
        \int_{\mathbb{R}^d} f_r(x) \dd x 
        -
        \int_{\mathbb{R}^d} g_r(x) \dd x 
        = 0. 
    \]
    so that $\int_{\mathbb{R}^d} f_r(x) \dd x = \int_{\mathbb{R}^d} g_r(x) \dd x = \varepsilon/2$. 

    Now, we can construct a transportation plan $\gamma \in \Pi(\mu,\nu)$ as follows: we first transport the common part of both densities via the identity map, and to transport the residuals we use the product measure. More precisely, we set $\alpha \eqdef \varepsilon/2$ and 
    \[
        \gamma 
        \eqdef 
        {(\mathrm{id}, \mathrm{id})}_\sharp h 
        + 
        \frac{1}{\alpha} f_r(x)g_r(y) \dd x \dd y.    
    \]
    We then have 
    \begin{align*}
        W_q^q(\mu,\nu) 
        \le 
        \int_{\mathbb{R}^d\times\mathbb{R}^d} |x-y|^q \dd \gamma(x,y) 
        = 
        \frac{1}{\alpha}
        \underbrace{
            \int_{\mathbb{R}^d\times\mathbb{R}^d} |x-y|^q f_r(x)g_r(y) \dd x \dd y
        }_{\eqdef I}.
    \end{align*}
    To estimate the RHS, we split the integral into two parts as 
    \begin{align*}
        I
        &= I_1 + I_2,\\  
        &= \int_{B_R\times B_R} |x-y|^q f_r(x)g_r(y) \dd x \dd y
        + 
        \int_{(B_R\times B_R)^c} |x-y|^q f_r(x)g_r(y) \dd x \dd y.
    \end{align*}
    The first term can be easily estimated as 
    \[
        I_1 
        \le
        (2R)^q \alpha^2, 
    \]
    while the second can be estimated as with the $p$-moments, for any $p>q$ by noticing that for $|x|>R$ we have $|x|^q \le R^{q-p}|x|^p$, so that
    \begin{align*}
        I_2 
        &\le 
        2^{q-1} \alpha
        \int_{|x| \ge R} |x|^q f_r(x) \dd x 
        +
        2^{q-1} \alpha
        \int_{|y| \ge R} |y|^q g_r(y) \dd y \\ 
        &\le 
        2^{q-1} R^{q-p} \alpha
        \left( M_p(\mu) + M_p(\nu)\right). 
    \end{align*}
    Combining both estimates, we obtain
    \[
        W_q^q(\mu,\nu) 
        \le 
        (2R)^q \alpha +  2^{q-1} R^{q-p}
        \left( M_p(\mu) + M_p(\nu)\right). 
    \]
    Optimizing in $R > 0$ gives the desired result.
\end{proof}

\subsubsection*{Geodesic convexity in Wasserstein spaces}
Given two probability measures $\mu_0, \mu_1 \in \mathscr{P}_p(\mathbb{R}^d)$, any optimal transportation plan $\gamma$ yields a natural interpolation between them given by
\[
    \mu_t \eqdef 
    {\pi_t}_\sharp \gamma, \text{ where } \pi_t(x,y) = (1-t)x + ty, \quad t \in [0,1].
\] 
It turns out that this interpolation is a constant speed geodesic in the metric space $(\mathscr{P}_p(\mathbb{R}^d), W_p)$, see~\cite{ambrosio2021lectures,santambrogio2015optimal,villani2009optimal} for details.

This allows us to define the notion of geodesic convexity for functionals defined over Wasserstein spaces. A functional $\mathscr{F} : \mathscr{P}_p(\mathbb{R}^d) \to \mathbb{R}\cup\{+\infty\}$ is said to be geodesically convex if for every $\mu_0, \mu_1 \in \mathscr{P}_p(\mathbb{R}^d)$ and any geodesic interpolation $\mu_t$ between them the function $t \mapsto \mathscr{F}(\mu_t)$ is convex in the classical sense. It is strictly geodesically convex if the same function is strictly convex. 

The $2$-Wasserstein distance is not itself geodesically convex - this is a feature of the geometry of the Wasserstein space, see~\cite[Chapter 9]{Ambrosio2008GigliSavare}. In fact, it satisfies the opposite inequality. In~\cite[Propositon~3.3]{santambrogio2016dealing}, Santambrogio exploited this fact to show the geodesic convexity of the maximal correlation functional $\varrho \mapsto \mathcal{T}(\varrho,\mu)$. This fact is at the heart of the study of moment measures via optimal transport, as well as the geodesic convexity of the entropy functional defined as 
\begin{equation}\label{eq.entropy}
  H(\varrho) 
  \eqdef 
  \begin{cases}
    \displaystyle
    \int_{\mathbb{R}^d}\log\varrho \dd\varrho,& \text{ if } \varrho \ll \mathscr{L}^d,\\ 
    +\infty,& \text{ otherwise,}
  \end{cases}
\end{equation}
which follows from McCann's criterion~\cite{mccann1997convexity}\footnote{
     And seems to be equivalent ot the Prékopa-Leindler inequality~\cite{prekopa1971logarithmic,leindler_1973}.
}.

Another important example of geodesically convex functionals are the $p$-moments, $\mu \mapsto M_p(\mu)$, this follows directly from the convexity of $x \mapsto |x|^p$, see~\cite[Chapter~7]{santambrogio2015optimal}. In fact, it is easy to see that the second moment $M_2$ is even \textit{strongly} geodesically convex, that is, for any geodesic interpolation $\mu_t$ between $\mu_0$ and $\mu_1$ it holds that
\begin{equation}\label{eq.strong_convexity_second_moment}
    M_2(\mu_t) 
    \le 
    (1-t)M_2(\mu_0) + t M_2(\mu_1) - \frac{1}{2}t(1-t)W_2^2(\mu_0,\mu_1).
\end{equation}
This property is particularly useful to derive quantitative stability results for minimizers, as done in~\cite{delalande2025regularized} and will be exploited here.

\subsection{General facts on moment measures}\label{subsec.general_moment_measures}

In~\cite{cordero2015moment}, Cordero-Eurasquin and Klartag gave a complete characterization of the measures $\mu$ which admit a meaningful moment measure representation. Their analysis goes through the functionals defined as in \eqref{eq.functional_intro}

They identified that if the potential $\psi$ in~\eqref{eq.moment_measure_def} does not satisfy certain regularity properties, its moment measure can loose a lot of information. They shed some light on this issue with the following example: take $y \in \mathbb{R}^d$, a convex body $C\subset \mathbb{R}^d$ and consider 
\[
  \psi(x) \eqdef 
  \begin{cases}
    x\cdot y,& x \in C\\ 
    +\infty,& \text{ otherwise}. 
  \end{cases}
\]
Then one can readily check that $\mu_\psi = \delta_y$, which does not give much information on the measure $\delta_y$, and certainly is not amenable to a quantitative stability result since one can drastically change the convex set $C$ and obtain the same moment measure. For this reason they focus on characterizing measures that admit a moment measure representation with an \textit{essentially continuous} and convex function. 

\begin{definition}\label{def.essentially_continuous}
    A convex function $\psi : \mathbb{R}^d \to \mathbb{R}\cup\{+\infty\}$ is said to be essentially continuous if it is lower semi-continuous and the set of points where it is discontinuous has zero $\mathscr{H}^{d-1}$ measure. 
\end{definition}

\begin{lemma}[\cite{cordero2015moment}]\label{lem.moms_essentially_continuous}
    Let $\psi$ be a convex and essentially continuous function such that $0 < \int_{\mathbb{R}^d} e^{-\psi} < +\infty$, and such that $0 < \int_{\mathbb{R}^d} e^{-\psi} < +\infty$. Then, its moment measure $\mu_\psi$ is centered and its support spans $\mathbb{R}^d$. In particular, 
    \[
        \inf_{\theta \in \mathbb{S}^{d-1}} 
        \int_{\mathbb{R}^d} |x\cdot \theta| \dd \mu_\psi(x) > 0.
    \]
\end{lemma}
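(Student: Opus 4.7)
The proof breaks into three parts: centering of $\mu_\psi$, the spanning property of $\supp \mu_\psi$, and the resulting positivity of the infimum. The key tools are integration by parts, convexity of $\psi$, and the essential continuity assumption, which together allow us to control boundary contributions from the effective domain $\Omega \eqdef \{\psi < +\infty\}$.

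I would first verify that $\mu_\psi$ is centered by a direct integration-by-parts computation. With $Z \eqdef \int_{\mathbb{R}^d} e^{-\psi}$, one has
\[
    \int_{\mathbb{R}^d} x\, \dd\mu_\psi(x)
    = \frac{1}{Z}\int_{\mathbb{R}^d} \nabla\psi(x)\, e^{-\psi(x)}\, \dd x
    = -\frac{1}{Z}\int_{\mathbb{R}^d} \nabla\bigl(e^{-\psi(x)}\bigr)\, \dd x,
\]
and I would approximate $\psi$ by mollification and truncate to large balls, using the integrability $Z < +\infty$ to produce the decay of $e^{-\psi}$ at infinity in directions where $\psi$ grows, and using essential continuity to ensure that the flux across $\partial \Omega$ contributes nothing in the limit.

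For the spanning step, I argue by contradiction. If $\supp \mu_\psi$ were contained in some affine hyperplane, centering forces this hyperplane to pass through the origin; hence it equals $\theta^\perp$ for some $\theta \in \mathbb{S}^{d-1}$. Then $\int |x\cdot\theta|\,\dd\mu_\psi = 0$, and rewriting via the pushforward definition yields $\nabla\psi \cdot \theta = 0$ Lebesgue-almost everywhere on $\intt(\Omega)$. By convexity of $\psi$, this forces $\psi$ to be constant along each line $\{x + t\theta : t \in \mathbb{R}\}\cap\intt(\Omega)$; essential continuity then upgrades this to the statement that $\Omega$ is, up to an $\mathscr{H}^{d-1}$-negligible set of discontinuities, a cylinder along $\theta$. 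Integrating $e^{-\psi}$ along the $\theta$ fibers yields $Z = +\infty$, contradicting the hypothesis.

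Finally, positivity of the infimum follows because $\theta \mapsto \int |x \cdot \theta|\, \dd\mu_\psi$ is continuous on the compact sphere $\mathbb{S}^{d-1}$ by dominated convergence (using $\mu_\psi \in \mathscr{P}_1(\mathbb{R}^d)$, a consequence of $\int |\nabla\psi|\, e^{-\psi} < +\infty$), and strictly positive at every $\theta$ by the previous step, so it attains its positive minimum. I expect the main obstacle to be the passage from $\nabla\psi\cdot\theta = 0$ a.e.\ to the cylindrical structure of $\Omega$: essential continuity is precisely the hypothesis that rules out the degenerate example mentioned just before the lemma (a linear function truncated on a convex body), and its careful use is the heart of the argument.
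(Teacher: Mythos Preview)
The paper does not supply its own proof of this lemma: it is simply stated with a citation to \cite{cordero2015moment}, so there is no in-paper argument to compare against. Your sketch is a reasonable outline of the argument one finds in that reference (Propositions~1 and~2 there), with the same three-step structure: integration by parts for centering, a contradiction argument for the spanning property, and continuity/compactness for the infimum.

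One point worth tightening in your sketch is the passage from ``$\nabla\psi\cdot\theta = 0$ a.e.\ on $\intt(\Omega)$'' to ``$\Omega$ is essentially a cylinder along $\theta$.'' Constancy of $\psi$ along $\theta$-segments inside $\intt(\Omega)$ is immediate from convexity, but deducing that each such segment is in fact a full line (so that the fiber integral of $e^{-\psi}$ diverges) uses essential continuity in a precise way: if some fiber hit $\partial\Omega$ at a point where $\psi$ stays finite, lower semicontinuity would force $\psi$ to be finite there while the neighboring interior values are a fixed constant, and this discontinuity set would have positive $\mathscr{H}^{d-1}$ measure. You allude to this, but in a full write-up this is exactly the step that needs the care you anticipate. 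The integrability $\int |\nabla\psi|\,e^{-\psi} < \infty$ that you invoke for the last step is a standard property of log-concave densities with finite mass and is also established in \cite{cordero2015moment}.
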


  {Since the latter condition will be of particular importance for us, we highlight its underlying object: we define the \emph{direction absolute first moment} of $\mu_\psi$ in the direction $\theta \in \mathbb{S}^{d-1}$ as 
\[
\Theta(\mu_\psi) \eqdef 
\inf_{\theta \in \mathbb{S}^{d-1}} 
\int_{\R^d} |x \cdot \theta| \, \dd \mu_\psi
\]
which is guaranteed to be a positive constant by Lemma \ref{lem.moms_essentially_continuous} above. 
}

Regarding the lemma above, we note that, alternatively, Santambrogio proposed an optimal transport proof of the same result by exploring the \textit{geodesic convexity} of the following functional 
\begin{equation}
    \min_{\varrho \in \mathscr{P}_1(\mathbb{R}^d)} 
    \mathscr{E}_\mu(\varrho)
    \eqdef 
    H(\varrho) + \mathcal{T}(\varrho, \mu),  
\end{equation}
where $H$ corresponds to the negative entropy functional~\eqref{eq.entropy} and $\mathcal{T}$ denotes Brenier's formulation of the quadratic optimal transport problem, or maximimal correlation problem~\eqref{eq.max_correlation}. Santambrogio relates both problems and recovers the characterization of moment measures with minimizers of $\mathscr{E}_\mu$, which are log-concave probability densities whose potential is essentially continuous, thus recovering the results of  Cordero-Eurasquin and Klartag. 

More recently, Delalande and Farineli~\cite{delalande2025regularized} studied the quantitative stability of \textit{regularized moment measures}, which they define via the unique minimizer of the following functional, which consists of a second moment regularized version of $\mathscr{E}_\mu$: 
\begin{equation}
    \mathscr{E}_{\mu,\alpha}(\varrho) 
    \eqdef 
    H(\varrho) + \mathcal{T}(\varrho, \mu) + \frac{\alpha}{2}M_2(\varrho). 
\end{equation}
Minimizers of the regularized version are now unique and of the following form: there exists $\psi_\alpha$ convex such that
\[
  \varrho_\alpha 
  = 
  e^{-(\psi_\alpha + \frac{\alpha}{2}|x|^2)} \text{ and }
  {(\nabla \psi_\alpha)}_\sharp e^{-(\psi_\alpha + \frac{\alpha}{2}|x|^2)} = \mu. 
\]

A major interest of introducing regularized moment measures is computational; indeed regularizing the functional $\mathscr{E}_\mu$ with the second moment term $M_2(\varrho)$ makes it $\alpha$-strongly geodesicaly convex in the Wasserstein geometry while preserving the most important feature of the representation via moment measures, that is being the push-forward of a Gibbs measure through (a perturbation of) its potential's gradient. For these reasons, its not surprising that the arguments in~\cite{delalande2025regularized} are based on strong geodesic convexity and actually do not require the dual formulation of $\mathscr{E}_{\mu,\alpha}$. 

To fully exploit the sharp stability version of Prékopa-Leindler's inequality, we shall require the connection of both primal and dual formulations. For this reason we introduce the regularized dual functional defined for any $\alpha \ge 0$ as
\begin{equation}
    \mathscr{J}_{\mu,\alpha}(\varphi) 
    \eqdef 
    \log
    \int_{\mathbb{R}^d} e^{-(\varphi^* + \frac{\alpha}{2}|x|^2)} \dd x 
    - 
    \int_{\mathbb{R}^d} \varphi \dd \mu.
\end{equation}
In the following theorem, we summarize the contributions from the literature and show a strong-duality type result with a small adaptation of a remark from Santambrogio in~\cite{santambrogio2016dealing}, but that will be useful in the sequel. More importantly, we identify a way to map minimizers of $\mathscr{E}_{\mu,\alpha}$ into maximizers of $\mathscr{J}_{\mu,\alpha}$ and vice-versa. 
\begin{theorem}[\cite{cordero2015moment,santambrogio2016dealing,delalande2025regularized}]\label{thm.characterization_moment_measures}
    Let $\mu$ be a Radon probability measure in $\mathscr{P}_1(\mathbb{R}^d)$ whose barycenter lies at the origin and such that $\dim \supp \mu = d$. The following assertions hold:
    \begin{itemize}
        \item For every $\alpha \ge 0$ there is strong duality: 
        \begin{equation}\label{eq.strong_duality}
            \sup \mathscr{J}_{\mu,\alpha} = - \inf \mathscr{E}_{\mu,\alpha}.
        \end{equation}
        \item 
        $\mathscr{E}_{\mu,\alpha}$ admits a minimizer of the form 
        \begin{equation}
            \varrho_\alpha = 
            e^{-(\psi_\alpha + \frac{\alpha}{2}|x|^2)} \text{ and }
            {(\nabla \psi_\alpha)}_\sharp \varrho_\alpha = \mu,
        \end{equation}
        where $\psi_\alpha$ is an essentially continuous and convex function, so that $\nabla \psi_\alpha$ is the unique Brenier map from $\varrho_\alpha$ to $\mu$.

        If $\alpha > 0$ this minimizer is unique; and if $\alpha = 0$ it is unique up to translations. In any case, if $\varrho_\alpha$ is a minimizer, defining
        \begin{equation}
            \varphi_\alpha \eqdef \psi_\alpha^* \text{ for } 
            \psi_\alpha \eqdef -\log\varrho_\alpha - \frac{\alpha}{2}|x|^2, 
        \end{equation}
        it follows that $\varphi_\alpha$ is a maximizer of $\mathscr{J}_{\mu,\alpha}$.
        \item $\mathscr{J}_{\mu,\alpha}$ admits a maximizer $\varphi_\alpha$, whose Legendre transform $\psi_\alpha \eqdef \varphi_\alpha^*$ is an essentially continuous and convex function, and gives the regularized moment measure representation for $\mu$ 
        \begin{equation}
            {(\nabla \psi_\alpha)}_\sharp e^{-(\psi_\alpha + \frac{\alpha}{2}|x|^2)} = \mu
        \end{equation}

        If $\alpha > 0$, the maximizer is unique; whereas if $\alpha = 0$ it is unique up to the addition of an affine function. 

        In any case, if $\varphi_\alpha$ is a maximizer, defining $\psi_\alpha = \varphi_\alpha^*$ it follows that
        \begin{equation}
            \varrho_\alpha \eqdef e^{-(\psi_\alpha + \frac{\alpha}{2}|x|^2)} 
        \end{equation}
        is a minimizer of $\mathscr{E}_{\mu,\alpha}$.
    \end{itemize}
\end{theorem}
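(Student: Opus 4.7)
The plan is to decompose the statement into three tasks: establishing the strong duality identity \eqref{eq.strong_duality}; assembling existence, uniqueness and optimality conditions for minimizers of $\mathscr{E}_{\mu,\alpha}$; and exhibiting the explicit Legendre correspondence that maps minimizers of $\mathscr{E}_{\mu,\alpha}$ into maximizers of $\mathscr{J}_{\mu,\alpha}$ and back. The existence and uniqueness theory is already available in \cite{cordero2015moment,santambrogio2016dealing,delalande2025regularized}, so the genuine novelty lies in the duality bridge and its consequences. Concerning strong duality, inserting the dual expression \eqref{eq.max_correlation} for $\mathcal{T}(\varrho,\mu)$ into $\mathscr{E}_{\mu,\alpha}$ and exchanging the two infima (both are minimizations) yields, for each fixed lsc convex $\varphi$, an inner problem
\[
    \inf_{\varrho} \left[\int\log\varrho\,\dd\varrho + \int W\,\dd\varrho \right], \qquad W \eqdef \varphi + \tfrac{\alpha}{2}|x|^2,
\]
whose value is $-\log\int e^{-W}\,\dd x$ by the Gibbs variational principle, saturated iff $\varrho\propto e^{-W}$. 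Relabeling $\tilde\varphi \eqdef \varphi^*$ and restricting to lsc convex functions (so that $\tilde\varphi^{*} = \varphi$) rewrites the outer problem as $-\sup_{\tilde\varphi}\mathscr{J}_{\mu,\alpha}(\tilde\varphi)$, which is exactly \eqref{eq.strong_duality}.

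Next, a minimizer $\varrho_\alpha$ of $\mathscr{E}_{\mu,\alpha}$ is produced by the direct method in $(\mathscr{P}_1(\mathbb{R}^d),W_1)$: lower semicontinuity of each term is classical, while coercivity follows from the hypothesis $\dim\supp\mu = d$ via Lemma~\ref{lem.moms_essentially_continuous}, which produces a strictly positive lower bound on $\mathcal{T}(\cdot,\mu)$ under translations, complemented, when $\alpha>0$, by the rigidity of $\tfrac{\alpha}{2}M_2$. Geodesic convexity of $\mathscr{E}_{\mu,\alpha}$ is assembled from McCann's criterion for $H$, Proposition~3.3 of \cite{santambrogio2016dealing} for $\mathcal{T}(\cdot,\mu)$, and the strong geodesic convexity \eqref{eq.strong_convexity_second_moment} of $M_2$; hence $\mathscr{E}_{\mu,\alpha}$ is strictly geodesically convex for $\alpha>0$, yielding uniqueness, while for $\alpha=0$ the invariance of $\mathscr{E}_{\mu,0}$ under translations of $\varrho$ (using that $\mu$ is centered) forces uniqueness only modulo this symmetry. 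Saturation of Gibbs' inequality in Step~1 with $\varphi = \psi_\alpha$ the Kantorovich potential for $\mathcal{T}(\varrho_\alpha,\mu)$ then forces $\varrho_\alpha = e^{-(\psi_\alpha+\frac{\alpha}{2}|x|^2)}$ with correct normalization, and Brenier's theorem identifies $\nabla\psi_\alpha$ as the optimal transport sending $\varrho_\alpha$ to $\mu$. Defining $\varphi_\alpha\eqdef\psi_\alpha^*$ and reading the saturation through the chain of equalities in Step~1, $\varphi_\alpha$ realizes $\sup\mathscr{J}_{\mu,\alpha}$.

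The reverse direction is symmetric: starting from a maximizer $\varphi_\alpha$ of $\mathscr{J}_{\mu,\alpha}$, set $\psi_\alpha\eqdef\varphi_\alpha^*$ and $\varrho_\alpha \eqdef e^{-(\psi_\alpha+\frac{\alpha}{2}|x|^2)}$; running the duality computation of Step~1 backward shows that $\varrho_\alpha$ saturates every inequality and hence minimizes $\mathscr{E}_{\mu,\alpha}$, while computing the first variation of $\mathscr{J}_{\mu,\alpha}$ at $\varphi_\alpha$ and invoking Brenier yields ${(\nabla\psi_\alpha)}_\sharp\varrho_\alpha = \mu$. Essential continuity of $\psi_\alpha$ at a maximizer is forced as in \cite{cordero2015moment}: any jump discontinuity across a hyperplane would admit a local perturbation strictly increasing $\mathscr{J}_{\mu,\alpha}$, contradicting maximality. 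The delicate point throughout is the unregularized case $\alpha=0$, where strict geodesic convexity fails along the translation symmetry and coercivity only holds modulo it; for that case we do not reprove existence from scratch but directly invoke the Prékopa-Leindler-based argument of \cite{cordero2015moment} or the optimal-transport strategy of \cite{santambrogio2016dealing}. The genuinely new ingredient is the duality bridge, whose clean Legendre correspondence between primal and dual optimizers is the structural fact needed in Section~\ref{sec.stability_moment_measures} to convert sharp Prékopa-Leindler stability into stability statements for moment measure representations.
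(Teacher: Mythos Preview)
Your proposal is correct and follows essentially the same route as the paper: both derive strong duality by inserting the dual formula \eqref{eq.max_correlation} for $\mathcal{T}$ and solving the inner problem in $\varrho$ via the Gibbs variational principle (the paper phrases this as computing $\mathscr{E}_{\mu,\alpha}^*(f)$ and specializing to $f=0$), and both defer existence and uniqueness to \cite{cordero2015moment,santambrogio2016dealing,delalande2025regularized}. The only cosmetic difference is in the primal--dual correspondence: the paper shows $\mathscr{J}_{\mu,\alpha}(\varphi_\alpha) = -\mathscr{E}_{\mu,\alpha}(\varrho_\alpha)$ by a direct computation using Fenchel's identity $\varphi_\alpha(\nabla\varphi_\alpha^*(x)) = \langle x,\nabla\varphi_\alpha^*(x)\rangle - \varphi_\alpha^*(x)$ and the pushforward relation, whereas you obtain the same conclusion by reading saturation through the chain of inequalities in the duality argument---these are equivalent.
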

\begin{proof}
    First let us prove the strong duality type-result. It was already proven in the case $\alpha = 0$ in~\cite{santambrogio2016dealing}, we prove it here for general $\alpha$ for completeness and for the reader's convenience. 

    First, we compute the Legendre transform of $\mathscr{E}_{\mu,\alpha}$. It is defined as a functional over the space of convex functions as 
    \begin{align*}
        \mathscr{E}_{\mu,\alpha}^*(f) 
        &\eqdef 
        \sup_{ \rho \in \mathscr{P}(\mathbb{R}^d)} 
        \inner{f, \rho} - H(\rho) 
        - \frac{\alpha}{2} \int_{\mathbb{R}^d}|x|^2\dd \rho
        - \inf_{\varphi \text{ convex }} 
        \int_{\mathbb{R}^d} \varphi^* \dd \rho +
        \int_{\mathbb{R}^d} \varphi \dd \mu\\ 
        &= 
        \sup_{\varphi \text{ convex }} 
        -  \int_{\mathbb{R}^d} \varphi \dd \mu 
        + 
        \sup_{ \rho \in \Pac(\mathbb{R}^d)} 
        \inner{f - \varphi^* - \frac{\alpha}{2}|x|^2, \rho} 
        - \int_{\mathbb{R}^d}\log\rho\dd\rho
    \end{align*}
    The supremum on $\rho$ is attained, and the maximizer is characterized by the following Euler-Lagrange equations; see~\cite{santambrogio2015optimal}: $\rho > 0$ a.e. over $\mathbb{R}^d$ and 
    \[
        \log \rho + \varphi^* + \frac{\alpha}{2}|x|^2 - f \equiv \text{cte},
    \]
    so that $\rho \propto e^{-( \varphi^* + \frac{\alpha}{2}|x|^2 - f)}$. A straightforward computation then gives 
    \[
        \mathscr{E}_{\mu,\alpha}^*(f) 
        = 
        \sup_{\varphi \text{ convex }} 
        \log
        \int_{\mathbb{R}^d} e^{-( \varphi^* + \frac{\alpha}{2}|x|^2 - f)} \dd x 
        -
        \int_{\mathbb{R}^d} \varphi \dd \mu. 
    \]

    As a result, taking $f \equiv 0$ we get that 
    \[
        \sup_{\varphi \text{ convex }} \mathscr{J}_{\mu,\alpha}(\varphi) 
        = 
        -
        \inf_{\varrho \in \mathscr{P}_1(\mathbb{R}^d)} 
        \mathscr{E}_{\mu,\alpha}(\varrho). 
    \]

    The existence of minimizers for $\mathscr{E}_{\mu,\alpha}$ and maximizers for $\mathscr{J}_{\mu,\alpha}$ is completely equivalent, as we will see in the sequel. However, the proof of existence for $\mathscr{E}_{\mu,\alpha}$ and general $\alpha\ge 0$ has been done in~\cite{delalande2025regularized}, along with the claimed characterization of minimizers and the relation with moment measures. Therefore, to be economic the reader referred to this work for a proof of the second assertion, we only comment that the uniqueness in the case $\alpha > 0$ stems from the strong geodesic convexity of the energy, while the functional $\mathscr{E}_\mu$ is invariant with respect to translations, \cite[Prop.~3.1]{santambrogio2016dealing}. 

    Now given a minimizer $\varrho_\alpha$, for any $\alpha \ge 0$, consider $\varrho_\alpha$ as defined above and let us compute $\mathscr{J}_{\mu, \alpha}(\varrho_\alpha)$ as follows 
    \begin{align*}
        \mathscr{J}_{\mu, \alpha}(\varphi_\alpha) 
        &= 
        \log
        \int_{\mathbb{R}^d} 
        e^{-(\varphi_\alpha^* +\frac{\alpha}{2}|x|^2)}\dd x 
        - 
        \int_{\mathbb{R}^d}
        \varphi_\alpha \dd \mu\\ 
        &= 
        \underbrace{
            \log 
            \int_{\mathbb{R}^d} 
            e^{\log \varrho_\alpha}\dd x 
        }_{ = 0} 
        - 
        \int_{\mathbb{R}^d}
        \varphi_\alpha \dd \mu.
    \end{align*}
    Now using the regularized moment measure representation for $\mu$ in terms of $\psi_\alpha = \varphi_\alpha^*$ and Fenchel's identity $\varphi_\alpha(\nabla \varphi^*_\alpha(x)) = \inner{x, \nabla \varphi^*_\alpha(x)} - \varphi_\alpha^*(x)$, it holds that 
    \begin{align*}
        \mathscr{J}_{\mu, \alpha}(\varphi_\alpha) 
        &= 
        - 
        \int_{\mathbb{R}^d}
        \varphi_\alpha(\nabla \varphi^*_\alpha(x))
        e^{-(\varphi_\alpha^*(x) + \frac{\alpha}{2}|x|^2)}\dd x \\ 
        &= 
        - 
        \underbrace{
            \int_{\mathbb{R}^d} 
            \inner{x, \nabla \varphi^*_\alpha(x)} \dd \varrho_\alpha
        }_{ = \mathcal{T}(\varrho_\alpha, \mu) }
        +
        \underbrace{
             \int_{\mathbb{R}^d}  
             \left(
                \varphi^*_\alpha(x) + \frac{\alpha}{2}|x|^2
             \right)\dd \varrho_\alpha
        }_{ = - H(\varrho_\alpha)}
        - 
        \frac{\alpha}{2}
        \int_{\mathbb{R}^d}|x|^2\dd \varrho_\alpha\\ 
        &= 
        -\mathscr{E}_{\mu,\alpha}(\varrho_\alpha).
    \end{align*}

    As a result, we finally obtain that 
    \[
        \sup \mathscr{J}_{\mu, \alpha} \ge \mathscr{J}_{\mu, \alpha}(\psi_\alpha) 
        = 
        -\mathscr{E}_{\mu,\alpha}(\varrho_\alpha)
        = 
        -\inf \mathscr{E}_{\mu,\alpha}
        = 
        \sup \mathscr{J}_{\mu, \alpha},
    \]
    so that $\varphi_\alpha$ must be a maximizer of $\mathscr{J}_{\mu,\alpha}$. 
    
    Doing the same reasoning, but on the inverse, starting an arbitrary maximizer of $\mathscr{J}_{\mu,\alpha}$, we can construct a log-concave probability measure which minimizes $\mathscr{E}_{\mu,\alpha}$, but since those are unique thanks to the strong geodesic convexity, or at least unique up to a translation in the case $\alpha = 0$, we obtain that the maximizers of $\mathscr{J}_{\mu,\alpha}$ must also be unique, or unique up to adding an affine function in the case $\alpha = 0$. 
\end{proof}

It can also be shown that the Gibbs measure associated with the moment measure (resp. regularized moment measure) representation of a given $\mu$ is a minimizer for $\mathscr{E}_\mu$ (resp. $\mathscr{E}_{\mu,\alpha}$), see~\cite[Prop.~5.1]{santambrogio2016dealing} for classical moment measures,~\cite[Prop.~4.2]{delalande2025regularized} and~\cite[Thm.~8]{cordero2015moment} for the corresponding statement for the dual functional $\mathscr{J}_\mu$.

The moment measure representation can be seen as a stationarity condition for $\mathscr{J}_\mu$ and its maximazation can be studied through its concavity. Indeed, it is shown in~\cite[Theorem 8 and Proposition 9]{cordero2015moment} that the moment measure $\mu_{\varphi^*}$ belong to the subdifferential (in the sense of concave functions) of the log-concave volume functional $\mathscr{J}$ at $\varphi$ if and only if $\varphi^*$ is essentially continuous. In a series of works~\cite{rotem2022surface,rotem2023anisotropic,falah2026functional} the first variation of the functional $\mathscr{J}$ has become clearer; in fact performing variations w.r.t. an arbitrary convex function might induce variations on the domain, but these can only be perceived if $\varrho_{\varphi^*}$ has a positive jump on the boundary of $\dom \varphi^*$, which happens only on a set of $\mathscr{H}^{d-1}$-negligible measure if $\varphi^*$ is essentially continuous. 

The second variation is an even more delicate matter. In the sequel we provide an argument for the derivative of variations induced by convex interpolations assuming that one end-point is strongly convex. To our surprise, the second variation can be formulated in terms of the deficit of the Brascamp-Lieb inequality. This is only a first step to understanding the second variation in full generality, as the strong convexity assumption is quite restrictive, but it is enough for our quantitative stability results derived from the stability of Brascamp-Lieb.

\begin{lemma}\label{lemma.first_second_variation}
    The first and second variations of $\mathscr{J}_\mu$ are computed as follows: 
    given two essentially continuous and convex functions $\varphi_0,\varphi_1$ such that for $i = 0,1$ we have 
    \[
        0 < \int_{\mathbb{R}^d} e^{\varphi_i^*} < + \infty. 
    \]
    Setting $v \eqdef \varphi_1 - \varphi_0$, $\varphi_t \eqdef \varphi_0 + tv$, and the Gibbs probability measure $\displaystyle \varrho_{\varphi_t^*,\alpha} \propto e^{-(\varphi^*_t + \frac{\alpha}{2}|x|^2)}$ and its regularized moment measure $\mu_{\varphi_t^*,\alpha} = {(\nabla \varphi_t^*)}_\sharp \varrho_{\varphi_t^*,\alpha}$ then the following hold
    \begin{enumerate}
        \item the function $t \mapsto \mathscr{J}_{\mu, \alpha}(\varphi_t)$ is concave, and therefore the time derivatives $\frac{\dd}{\dd t}
       \mathscr{J}_{\mu, \alpha}(\varphi_t)$ and $\frac{\dd^2}{\dd t^2}
       \mathscr{J}_{\mu,\alpha}(\varphi_t)$ exist for a.e. $t \in (0,1)$.
       \item For every $t \in (0,1)$ such that the first time derivative exists, we have 
        \begin{align*}
            \frac{\dd}{\dd t}
            \mathscr{J}_{\mu, \alpha}(\varphi_t) 
            = 
            \int_{\mathbb{R}^d} v \dd (\mu_{\varphi_t^*,\alpha} -\mu).
        \end{align*}
       \item Assuming in addition that $\varphi_1$ is $\kappa$-strongly convex, for every $t$ where the second derivative exists it holds that 
       \begin{align*}
            \frac{\dd^2}{\dd t^2}
            \mathscr{J}_{\mu,\alpha}(\varphi_t)
            =
            \Var_{\varrho_{\varphi^*_{t},\alpha}}(w) 
            - 
            \int_{\mathbb{R}^d} 
            \inner{
                {(D^2\varphi_t^*)}^{-1} \nabla w, \nabla w 
                }
            \dd \varrho_{\varphi^*_{t},\alpha},
        \end{align*}
        where $w(x) \eqdef v(\nabla \varphi^*(x))$. 
    \end{enumerate}
\end{lemma}
\begin{proof}
    The concavity follows from the Prékopa-Leindler inequality, as proven in~\cite{cordero2015moment}. As a result, $t \mapsto \mathscr{J}_{\mu, \alpha}(\varphi_t)$ is automatically twice differentiable a.e., and at every point where the first time derivative exists, it can be computed with the right derivative. Therefore, from~\cite[Theorem~1.5]{rotem2023anisotropic}, since both functions are essentially continuous it holds that 
    \[
        \left. 
            \log \int_{\mathbb{R}^d} e^{-\varphi_t^* + \frac{\alpha}{2}|x|^2}\dd x
        \right|_{t = 0^+}
        = 
        \int_{\mathbb{R}^d} v \dd \mu_{\varphi_t^*,\alpha},
    \]    
    so the first characterization follows. 

    This result stems from the first derivative in time for $\varphi_t^*(x)$ from~\cite[Proposition~2.1]{rotem2022surface}
    \[
        \frac{\dd}{\dd t} \varphi_t^*(x) = -v(\nabla \varphi_t^*(x)),
    \]
    whenever $v$ is lower semi-continuous. Here $v = \varphi_1 - \varphi_0$, hence it is lower semi-continuous in the domain of $\varphi_0$. 

    Now, assume that $\varphi_1$ is $\kappa$-strongly convex, so that $\varphi_t$ is $t\kappa$-strongly convex. Let $y_t$ be optimal for the supremum defining $\varphi_t^*(x)$, from the strong convexity of $\varphi_t$ it is uniquely defined and  
    \begin{equation}
        y_t(x) \eqdef \argmax_{y \in \mathbb{R}^d} \inner{y,x} - \varphi_t(y), 
        \text{ so that }
        y_t = {\nabla \varphi_t}^{-1}(x) = \nabla \varphi_t^*(x),
    \end{equation}

    On the other hand, $(t,x) \mapsto F(t,x) \eqdef \varphi_t^*(x)$ is jointly convex, hence in particular it is a.e. twice differentiable. We now have
    \[
        \frac{\dd^2}{\dd t^2} \varphi_t^*(x) 
        = 
        -\frac{\dd}{\dd t} v(\nabla \varphi_t^*(x)) 
        = 
        -\nabla v(\nabla \varphi_t^*(x)) \cdot \partial_t y_t(x). 
    \]
    Taking derivatives w.r.t.~$t$ on the optimality condition defining $y_t$, that is $x = \nabla \varphi_t(y_t)$, we get
    \[
        \partial_t y_t = -D^2\varphi_t(y_t)^{-1}\nabla v(y_t).  
    \]
    As a result, assuming temporary enough regularity, we can then compute
    \begin{align}
        \frac{\dd^2}{\dd t^2}
       \mathscr{J}_{\mu, \alpha}(\varphi_t) 
       &= 
       - \frac{Z_t'}{Z_t} \int_{\mathbb{R}^d} v(\nabla\varphi_t^*) \dd\varrho_{\varphi_t^*,\alpha}
       + 
       \int_{\mathbb{R}^d} 
       \left( v(\nabla\varphi_t^*)^2 - \frac{\partial^2 \varphi_t^*}{\partial  t^2} \right)
       \dd \varrho_{\varphi_t^*,\alpha} \\ 
       \label{eq.second_variation_formula}
       &= 
       \Var_{\varrho_{\varphi_t^*,\alpha}}(w) 
       -
       \int_{\mathbb{R}^d} 
       \inner{
        {\left(D^2\varphi_t(\nabla \varphi_t^*)\right)} \nabla v \circ \nabla \varphi_t^*,
        \nabla v\circ \nabla \varphi_t^*
        } 
       \dd \varrho_{\varphi_t^*,\alpha}.
    \end{align}

    To make the above computation rigorous, it suffices to check that $\partial_t^2 \varphi_t^* e^{-(\varphi_t^* + \frac{\alpha}{2}|\cdot|^2)} \in L^1(\mathbb{R}^d)$ to use differentiation under the integral sign. For this, notice that from convexity of $F(t,x)$, we have that $\partial_{tt} F(t,x) \ge 0$. Next, consider the function 
    \[
        \Phi(x) \eqdef \max\{\varphi_0(x), \varphi_1(x)\}, 
    \]
    so that $\Phi^* \le \varphi_t^*$ and $e^{-\Phi^*}$ is integrable. Now fixing an interval $I = (a,b) \subset [0,1]$ we have that 
    \begin{align*}
        \int_{a}^{b} 
        \int_{\mathbb{R}^d} \partial_{tt} F(t,x) e^{-(\varphi_t^* + \frac{\alpha}{2}|\cdot|^2)} \dd x \dd t
        &\le 
        \int_{a}^{b} 
        \int_{\mathbb{R}^d} \partial_{tt} F(t,x) e^{-\Phi^*(x)} \dd x \dd t \\ 
        &
        =  
        \int_{\mathbb{R}^d} 
        e^{-\Phi^*(x)} \int_{a}^{b} \partial_{tt} F(t,x) \dd t \dd x \\ 
        &\le 
        \int_{\mathbb{R}^d} 
        e^{-\Phi^*(x)} \left(
            \partial_t F(b,x) - \partial_t F(a,x)
        \right) \dd x \\ 
        &\le 
         \int_{\mathbb{R}^d} 
        \left(
            |w_b(x)| + |w_a(x)|
        \right) e^{-\Phi^*(x)} \dd x,
    \end{align*} 
    where we have used the fact that the second derivative $\partial_{tt} F(t,x)$ is composed of an absolutely continuous part and an atomic measure with positive jumps~\cite{ambrosio2000functions,evans2015measure}. On the other hand, as the log-concave measure $e^{-\Phi(x)}$ has finite and positive total mass, all its moments are finite. 

    Hence to conclude, it suffices to show that 
    \begin{equation}\label{eq.growth_wt}
        |w_t(x)| \le C_I(1 + |x|^2). 
    \end{equation}
    To this end, we will show that 
    \begin{align}
        \label{eq.growth_yt}
        |y_t(x)| &\le C_I(1 + |x|), \\ 
        \label{eq.growth_varphit}
        |\varphi_t(x)| \le C_I(1 + |x|^2),
        \quad 
        |\varphi_0(x)| &\le C_I(1 + |x|^2),
        \quad \text{and}
        |\varphi_1(x)| \le C_I(1 + |x|^2). 
    \end{align}
    First, recall that from the convexity of $\varphi_0$ and the strong convexity of $\varphi_1$ it holds for some $p_0, p_1$, and $c_0,c_1$ that
    \[
        \varphi_0(y) \ge \inner{p_0,y} - c_0, \quad 
        \varphi_1(y) \ge \frac{\kappa}{2}|y|^2 + \inner{p_1,y} - c_1,
        \text{ and hence }
        \varphi_t(y) \ge \frac{t\kappa}{2}|y|^2 + \inner{\bar p,y} - \bar c. 
    \]
    So to prove~\eqref{eq.growth_yt}, fix some $\bar y$, so that the fact that $y_t(x)$ is optimal gives 
    \[
        \inner{x,y_t(x)} - \varphi_t(y_t(x)) \geq
        \inner{x,\bar y} - \varphi_t(\bar y). 
    \]
    As a result, we have 
    \[
        \varphi_t(y_t(x)) 
        \le \inner{x, y_t - \bar y} + \varphi_t(\bar y)
        \le |x||y_t| + C_I(1 + |x|). 
    \]
    Combining this with the strong convexity of $\varphi_t$ above we have that 
    \begin{align*}
        \frac{t\kappa}{2}|y_t|^2 
        &\le C_I(1 + |x|)|y_t| + C_I(1 + |x|) 
        &\le  \frac{t\kappa}{4}|y_t|^2 + 
        C_I(1 + |x|^2). 
    \end{align*}
    This shows~\eqref{eq.growth_yt} as desired. 

    For~\eqref{eq.growth_varphit}, we have from the Fenchel inequality that 
    \[
        \varphi_t^*(x) \ge \inner{x,\bar y} - \varphi_t(\bar y) 
        \ge - C_I(1 + |x|). 
    \]
    But from the equality case of Fenchel inequality it also holds that 
    \[
        \varphi_t(y_t) = 
        \inner{x,y_t} - \varphi_t^*(x) 
        \le 
        \inner{x,y_t} + C_I(1 + |x|)
        \le 
        C_I(1 + |x|^2). 
    \]
    For the opposite inequality, it suffices to recall that $\varphi_t(y_t) \ge - C_I(1 + |x|)$. We obtain the same estimates for $\varphi_i(y_t)$ for $i=0,1$ similarly. But more importantly, recalling that $w_t(x) = \varphi_1(y_t) - \varphi_0(y_t)$, from this we obtain~\eqref{eq.growth_wt} and the differentiation under the integral sign holds. 

    Finally, we remark that the gradient of quantity $w$ can be written as
    \[
        \nabla w(x)
        =
        \nabla(v(y_t)) 
        = 
        D^2\varphi_t^*(x)(\nabla v) \circ y_t.
    \]
    Therefore, using that $D^2\varphi_t(y_t)^{-1} = D^2\varphi_t^*(x)$, the previous computation gives 
    \[
        \frac{\dd^2}{\dd t^2} \varphi_t^*(x) 
        =
        \inner{{\left(D^2\varphi_t^*\right)}^{-1} \nabla w,
        \nabla w}, 
    \]
    which can be used in~\eqref{eq.second_variation_formula} above to conclude. 
\end{proof}

For $\alpha = 0$, the above computation for the second variation of $\mathscr{J}_\mu$ gives precisely minus the deficit of the Brascamp-Lieb inequality. In the case that $\alpha > 0$ we need to be a bit careful to perform a Taylor expansion on the Hessian and get the same deficit plus lower order terms in $\alpha$. Either way, these computations give a clear intuition on why Cordero-Eurasquin and Klartag's characterization should hold, but it does not provide the essential continuity of the potential, which is at the heart of their statement. 

\begin{remark}\label{remark.comparison_withCE}
    More importantly to our purposes, it becomes clear that having access to a sharp quantitative stability version of Brascamp-Lieb's inequality shall give stability results for the potentials of moment measure representations. As discussed in the introduction, there are a few next order expansions of the Brascamp-Lieb inequality in the literature~\cite{cordero2017transport,bolley2018dimensional,harge2008reinforcement}. However, not only our Theorem~\ref{thm.stability_moment_measures_Rd_Lambda} is more adapted to our purposes as we show that the deficit $\delta_{\BL}$ controls a distance, but also we do it with a multiplicative constant $C_d$ that is independent of the measure $\varrhovarphi$. This last feature is crucial for the applications to moment measures, as we shall see below.
\end{remark}

\subsection{State of the art on the Prékopa-Leindler inequality}
The original version of the Prékopa-Leindler inequality was initially proposed independently by Prékopa in~\cite{prekopa1971logarithmic,prekopa_1973} and Leindler~\cite{leindler_1973} and can be stated as follows: given a triplet of measurable functions $f,g,h : \mathbb{R}^d \to \mathbb{R}_+$ such that for a given parameter $s \in (0,1)$ the so-called \textit{Prékopa condition} holds 
\begin{equation}\label{eq.Prekopa_condition}
    f(x)^sg(y)^{1-s}
    \le h(s x + (1-s) y)
    \text{ for all } x,y \in \mathbb{R}^d,
\end{equation}
then we have the following bound
\begin{equation}\label{eq.PLinequality}
    {
    \left(
        \int_{\mathbb{R}^d}f(x)\dd x
    \right)
    }^{s}
    {
    \left(
        \int_{\mathbb{R}^d}g(x)\dd x
    \right)
    }^{1-s}
    \le 
    \int_{\mathbb{R}^d}h(x)\dd x. 
\end{equation}
This inequality is a functional generalization of the Brunn-Minkowski inequality, for which many stability results have been derived, see for instance the seminal work~\cite{figalli_stability_2010}, where stability version of such geometric inequalities have been derived.   

A stability version for the Prékopa-Leindler inequality quantifies how functions nearly satisfying the equality case must be close to an optimal log-concave profile. The result takes the following form: if $h, f, g: \mathbb{R}^d \to \mathbb{R}_+$ satisfy Prékopa condition~\eqref{eq.Prekopa_condition} and~\eqref{eq.PLinequality} is almost satisfied in the sense that 
\begin{equation}\label{eq.PL_eps_equality}
    \int_{\mathbb{R}^d} h(x)\dd x \leq (1 + \varepsilon) \left( \int_{\mathbb{R}^d} f(x)\dd x \right)^s \left( \int_{\mathbb{R}^d} g(x)\dd x \right)^{1 - s}.
\end{equation}
for some $\varepsilon>0$, there exists a log-concave function $\tilde{h}: \mathbb{R}^d \to \mathbb{R}_+$ and parameters $a > 0$, $x_0 \in \mathbb{R}^d$ such that the following proximity estimates hold:
\begin{equation}\label{eq.sharpPL_consequences}
    \begin{aligned}
        \int_{\mathbb{R}^d} |f(x) - a^{-s} \tilde{h}(x - s x_0)|  dx &\leq C(\tau) \varepsilon^{\alpha_n(\tau)} \int_{\mathbb{R}^d} f(x)  dx, \\
        \int_{\mathbb{R}^d} |g(x) - a^{1-s} \tilde{h}(x + (1-s)x_0)|  dx &\leq C(\tau) \varepsilon^{\alpha_n(\tau)} \int_{\mathbb{R}^d} g(x)  dx, \\
        \int_{\mathbb{R}^d} |h(x) - \tilde{h}(x)|  dx &\leq C(\tau) \varepsilon^{\alpha_n(\tau)} \int_{\mathbb{R}^d} h(x)  dx,
    \end{aligned}
\end{equation}
where  
\[
    \tau \eqdef \min(s, 1-s), \quad 
    a \eqdef 
    \frac{
        \int_{\mathbb{R}^d} f
    }{
        \int_{\mathbb{R}^d} g
    }
\]
$C(\tau)$ is a constant that depends on $\tau$, and $\alpha_n(\tau) > 0$ is a computable exponent that typically depends on both the dimension $n$ and the parameter $\tau$.

Obtaining sharper versions of this inequality is currently a very active area of research. On one side, the sharp exponent $\alpha$ is expected to be $1/2$, as is the case, for instance, for the isoperimetric and Brunn-Minkowski inequalities~\cite{figalli_stability_2010}. Indeed, this exponent has been obtained in~\cite{figalli2025sharp}, but at the cost of a sub-obtimal constant $C(\tau)$ that does not behave properly as $\tau \to 0^+$. On the other hand, in~\cite{figalli2024improved}, a quantitative stability version has been shown with $\alpha = 1/2$ and $C \propto \tau^{-1/2}$ in the 1D or radial in $\mathbb{R}^d$, when the involved functions are log-concave. 

In the present work, we will only need a stability version of Prékopa-Leindler for the case $\tau = 1/2$. Therefore, the $\tau-$dependence of the constant $C(\tau)$ is not particularly important for us, and we may hence use some of the results from~\cite{figalli2025sharp} directly, which we collect in the following Lemma. 

\begin{lemma}\label{lemma.sharp_stabilityPL}
    Let $f,g,h:\mathbb{R}^d \to \mathbb{R}_+$ be measurable functions satisfying Prékopa's condition~\eqref{eq.Prekopa_condition} with $s = 1/2$. If there is $\varepsilon > 0$ such that~\eqref{eq.PL_eps_equality} holds, there exists $x_0 \in \mathbb{R}^d$ and a dimensional constant $C_d$ such that 
    \[
        \int_{\mathbb{R}^d} 
        \left|
            \frac{f(x)}{\int_{\mathbb{R}^d} f} 
            - 
            \frac{g(x+x_0)}{\int_{\mathbb{R}^d} g}
        \right|\dd x 
        \le C_d \varepsilon^{1/2}.
    \]
\end{lemma}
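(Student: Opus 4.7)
The plan is to extract this normalized two-function $L^1$-stability statement from the three-function stability estimates in \eqref{eq.sharpPL_consequences}, which are available with the sharp exponent $\alpha_d(\tau) = 1/2$ from \cite{figalli2025sharp} in the case $\tau = s = 1/2$. Since only the symmetric value $s=1/2$ is needed, the $\tau$-dependence of the constant $C(\tau)$ in \cite{figalli2025sharp} plays no role, and we can simply absorb it into a dimensional constant $C_d$.

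First, I would apply the main stability theorem of \cite{figalli2025sharp} with $s = 1/2$, producing a log-concave function $\tilde h$, a translation vector $x_0 \in \mathbb{R}^d$, and the specific constant $a = \int f / \int g$ for which the three bounds in \eqref{eq.sharpPL_consequences} hold. Dividing the first bound by $\int f$ and the second by $\int g$, and denoting the normalized versions by $F \eqdef f/\int f$, $G \eqdef g/\int g$, and $H \eqdef \tilde h/\int \tilde h$, yields
\begin{equation*}
    \int_{\mathbb{R}^d} |F(x) - \alpha_1 H(x - x_0/2)|\dd x \le C_d \varepsilon^{1/2},
    \qquad
    \int_{\mathbb{R}^d} |G(x) - \alpha_2 H(x + x_0/2)|\dd x \le C_d \varepsilon^{1/2},
\end{equation*}
where the scaling constants are $\alpha_1 = a^{-1/2}\int \tilde h / \int f$ and $\alpha_2 = a^{1/2}\int \tilde h / \int g$.

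Next, since $\int F = \int G = \int H = 1$, integrating the two displays above and using the triangle inequality immediately gives $|1 - \alpha_1| \le C_d\varepsilon^{1/2}$ and $|1 - \alpha_2| \le C_d \varepsilon^{1/2}$. Substituting back and absorbing the $O(\varepsilon^{1/2})$ error caused by replacing $\alpha_1, \alpha_2$ by $1$ (using that translated copies of $H$ integrate to one), we obtain
\begin{equation*}
    \int_{\mathbb{R}^d} |F(x) - H(x - x_0/2)|\dd x \le C_d \varepsilon^{1/2},
    \qquad
    \int_{\mathbb{R}^d} |G(x) - H(x + x_0/2)|\dd x \le C_d \varepsilon^{1/2}.
\end{equation*}
Performing the change of variables $x \mapsto x + x_0$ in the second bound eliminates the reference function $H$: by the triangle inequality
\begin{equation*}
    \int_{\mathbb{R}^d} |F(x) - G(x + x_0)|\dd x
    \le
    \int_{\mathbb{R}^d}|F(x) - H(x - x_0/2)|\dd x
    +
    \int_{\mathbb{R}^d}|G(y) - H(y + x_0/2)|\dd y
    \le
    C_d\varepsilon^{1/2},
\end{equation*}
which is the desired conclusion after relabeling $x_0$.

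Since we are directly invoking the sharp result of \cite{figalli2025sharp}, there is no genuine obstacle; the only technical point is the bookkeeping of the normalization constants $\alpha_1, \alpha_2$ and the choice of shift, both of which are handled by an application of the triangle inequality together with the fact that $F$, $G$, and $H$ are all probability densities.
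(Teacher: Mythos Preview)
Your proof is correct and follows essentially the same route as the paper: invoke the sharp three-function stability of \cite{figalli2025sharp} at $s=1/2$, normalize, and combine the two resulting estimates via a change of variables and the triangle inequality. The only cosmetic difference is that the paper observes that after dividing by $\int f$ and $\int g$ the coefficients of $\tilde h$ already agree (both equal $(\int f\int g)^{-1/2}$), so it bypasses your intermediate step of estimating $|1-\alpha_1|$ and $|1-\alpha_2|$; your version is slightly more explicit but arrives at the same bound.
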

\begin{proof}
    Recall $a \eqdef \frac{\int_{\mathbb{R}^d} f}{\int_{\mathbb{R}^d} g}$, so from~\cite{figalli2025sharp}, estimates~\eqref{eq.sharpPL_consequences} hold with $\alpha = 1/2$ and a constant $C(\tau) = C(1/2) = C_d$, that depends only on the dimension. So there is a log-concave function $\tilde h$ such that 
    \begin{equation*}
        \begin{aligned}
            \int_{\mathbb{R}^d} 
            \left|
                \frac{f(x)}{\int_{\mathbb{R}^d} f} 
                - \frac{\tilde{h}(x -  \frac{1}{2} x_0)}{
                {\left(\int_{\mathbb{R}^d} f
                \int_{\mathbb{R}^d} g\right)}^{1/2}
                } 
            \right|  \dd x &\leq C_d \varepsilon^{1/2}, \\
            \int_{\mathbb{R}^d} 
            \left| 
                \frac{g(x)}{\int_{\mathbb{R}^d} g} 
                - \frac{\tilde{h}(x + \frac{1}{2} x_0)}{
                {\left(\int_{\mathbb{R}^d} f
                \int_{\mathbb{R}^d} g\right)}^{1/2}
                } 
                \right|  \dd x &\leq C_d \varepsilon^{1/2}.
        \end{aligned}
    \end{equation*}
    A change of variables on the inequality concerning $g$ and an easy triangular inequelity give the desired bound. 
\end{proof}

\section{Sharp Quantitative Stability for Brascamp-Lieb inequality}\label{sec.stability_Brascamp_Lieb}
In this section we are focused on the quantitative stability of the Brascamp-Lieb inequality, as stated in Theorem \ref{thm.brascamp_lieb_quantitative_intro}. For the reader's convenience, we present a proof of this inequality proposed by Helffer~\cite{helffer1998remarks}, with the advantage that the equality cases appear naturally from the computations. This method of proof can now be reinterpreted as an application of the \textit{carré du champ} method~\cite{bakry1985diffusions,bakry2013analysis}, which is based on the geometry of the following elliptic operator 
\[
    Lu \eqdef \Delta u - \nabla \varphi\cdot\nabla u.
\]
This method is based on the so called $\Gamma$-calculus, which revolves around the following commutator operators
\begin{align*}
    \Gamma(f,g) 
    &\eqdef \frac{1}{2}\left(
        L(fg) - fLg - gLf 
    \right) = \inner{\nabla f, \nabla g},\\
    \Gamma_2(f,g) 
    &\eqdef \frac{1}{2}\left(
        L\Gamma(f,g) - \Gamma(f,Lg) - \Gamma(g,Lf) 
    \right) 
    = 
    D^2f\colon D^2g + \inner{D^2\varphi \nabla f, \nabla g}.
\end{align*}
It is then a straight-forward computation to check the integration by parts formula 
\begin{equation}
    \int_{\mathbb{R}^d}\Gamma(f,g)\dd \varrhovarphi = 
    \int_{\mathbb{R}^d}\inner{f,g}\dd \varrhovarphi = 
    - \int_{\mathbb{R}^d}fLg \dd \varrhovarphi = 
    - \int_{\mathbb{R}^d}gLf \dd \varrhovarphi.
\end{equation}

\begin{lemma}[\cite{brascamp1976extensions,helffer1998remarks}]
\label{lemma.brascamp_lieb_equality}
    Inequality~\eqref{eq.brascamp_lieb} holds, and we have equality if, and only if, there exists $a \in \mathbb{R}^d$ such that
    \[
        f(x) = a\cdot \nabla \varphi(x) + \mathbb{E}_{\varrhovarphi}f.
    \]
\end{lemma}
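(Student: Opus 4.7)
The plan is to run the classical $L^2$/Hörmander argument (equivalent to the Bakry–Émery approach sketched just above) based on the Poisson equation for $L$ coupled with the Bochner identity $\int (Lu)^2\dd\varrhovarphi = \int \Gamma_2(u,u)\dd\varrhovarphi$ from $\Gamma$-calculus. Without loss of generality I may subtract $\mathbb{E}_{\varrhovarphi}f$ and assume this mean vanishes. I would first treat the case $f\in\mathscr{C}^\infty_c(\mathbb{R}^d)$ with $\varphi$ smooth and strongly convex, $D^2\varphi\succeq\varepsilon\id$, solving $-Lu=f$ in $L^2(\varrhovarphi)$ by standard elliptic theory, and then recover the general essentially-continuous convex $\varphi$ via the regularization $\varphi_\varepsilon\eqdef \varphi\,\square\,\tfrac{\varepsilon}{2}|\cdot|^2+\tfrac{\varepsilon}{2}|\cdot|^2$ together with density in $f$ and a passage to the limit.

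With $u$ in hand, the integration-by-parts identity recalled just before the lemma gives $\Var_{\varrhovarphi}(f)=\int\inner{\nabla f,\nabla u}\dd\varrhovarphi$, and Cauchy–Schwarz in the pointwise quadratic form $D^2\varphi$ yields
\begin{equation*}
    \Var_{\varrhovarphi}(f)^2 \le
    \left(\int_{\mathbb{R}^d}\inner{(D^2\varphi)^{-1}\nabla f,\nabla f}\dd\varrhovarphi\right)
    \left(\int_{\mathbb{R}^d}\inner{D^2\varphi\,\nabla u,\nabla u}\dd\varrhovarphi\right).
\end{equation*}
The second factor is in turn dominated by $\Var_{\varrhovarphi}(f)$ via the Bochner identity: since $\Gamma_2(u,u) = D^2 u\colon D^2 u + \inner{D^2\varphi\,\nabla u,\nabla u}$,
\begin{equation*}
    \int_{\mathbb{R}^d}\inner{D^2\varphi\,\nabla u,\nabla u}\dd\varrhovarphi
    \le \int_{\mathbb{R}^d}\Gamma_2(u,u)\dd\varrhovarphi
    = \int_{\mathbb{R}^d}(Lu)^2\dd\varrhovarphi
    = \int_{\mathbb{R}^d} f^2\dd\varrhovarphi = \Var_{\varrhovarphi}(f),
\end{equation*}
and dividing by $\Var_{\varrhovarphi}(f)$ delivers $\delta_{\BL}(f)\ge 0$.

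For the equality case, $\delta_{\BL}(f)=0$ forces both the Cauchy–Schwarz step and the Bochner estimate to be saturated simultaneously. Bochner equality demands $D^2 u\equiv 0$ on the support of $\varrhovarphi$; since this support is the convex (hence connected) set $\{\varphi<+\infty\}$, it follows that $u(x)=a\cdot x + c$ for some $a\in\mathbb{R}^d$, $c\in\mathbb{R}$, and the Poisson equation then yields $f = -Lu = \nabla\varphi\cdot\nabla u = a\cdot\nabla\varphi$, i.e.\ $f(x) = a\cdot\nabla\varphi(x)+\mathbb{E}_{\varrhovarphi}f$ after restoring the mean. The converse is obtained by running the same chain of identities backwards with $u(x)=a\cdot x$, which turns every inequality above into an identity.

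The main technical obstacle is not the algebra above but the approximation/regularization scheme needed to justify the Poisson solve and the $\Gamma_2$ integration by parts in the generality of the lemma, where $\varphi$ is only essentially continuous (so $D^2\varphi$ may be degenerate or defined only in a distributional sense) and $f$ is merely locally Lipschitz in $L^2(\varrhovarphi)$. Ensuring that both sides of the deficit pass continuously to the limit $\varepsilon\to 0$ and that the equality characterization survives the limit is the delicate point; this is however a standard feature of the Bakry–Émery literature and can be handled by combining monotone convergence with uniform $L^2$-bounds on the Poisson solutions.
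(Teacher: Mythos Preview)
Your proof is correct and follows essentially the same route as the paper: solve the Poisson equation $Lu=\bar f$, apply Cauchy--Schwarz in the $D^2\varphi$ inner product, and bound the second factor via the Bochner/$\Gamma_2$ identity, with the equality case forcing $D^2u\equiv 0$ and hence $u$ affine. The only cosmetic difference is that the paper keeps the term $\int\|D^2u\|_F^2\dd\varrhovarphi$ explicit in the identity $\int\langle D^2\varphi\,\nabla u,\nabla u\rangle\dd\varrhovarphi=\Var_{\varrhovarphi}(f)-\int\|D^2u\|_F^2\dd\varrhovarphi$ rather than dropping it as you do, and the paper does not spell out the approximation step for general essentially continuous $\varphi$ in this lemma (it simply works with $f\in\mathscr{C}^\infty_c$ and invokes elliptic regularity), so your discussion of the regularization is if anything more careful.
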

\begin{proof}
    Given $f \in \mathscr{C}^\infty_c(\mathbb{R}^d)$, set $\bar f = f - \mathbb{E}_{\varrhovarphi} f$ and let $u$ be the unique weak solution of 
    \[
        Lu = \bar f. 
    \]
    From the classical regularity theory of uniformly elliptic equations, $u \in \mathscr{C}^\infty(\mathbb{R}^d)$. So we can use the $\Gamma$ calculus tools to write the variance of $f$ as 
    \begin{align*}
        \Var_{\varrhovarphi}(f) 
        &= 
        \int_{\mathbb{R}^d} \bar f^2 \dd \varrhovarphi 
        = 
        \int_{\mathbb{R}^d} \bar fLu \dd \varrhovarphi 
        = 
        -\int_{\mathbb{R}^d} \inner{\nabla f, \nabla u}\dd \varrhovarphi\\ 
        &= 
        -\int_{\mathbb{R}^d} 
        \inner{{(D^2\varphi)}^{-1/2}\nabla f,{(D^2\varphi)}^{1/2}\nabla u}
        \dd \varrhovarphi\\ 
        &\le 
        {\left(
            \int_{\mathbb{R}^d} 
            \norm{{(D^2\varphi)}^{-1/2}\nabla f}^2
            \dd \varrhovarphi
        \right)}^{1/2}
        {\left(
            \int_{\mathbb{R}^d} 
            \norm{{(D^2\varphi)}^{1/2}\nabla u}^2
            \dd \varrhovarphi
        \right)}^{1/2}\\
        &=
        {\left(
            \int_{\mathbb{R}^d} 
            \inner{{(D^2\varphi)}^{-1}\nabla f,\nabla f}
            \dd \varrhovarphi
        \right)}^{1/2}
        {\left(
            \int_{\mathbb{R}^d} 
            \inner{{D^2\varphi}\nabla u, \nabla u}
            \dd \varrhovarphi
        \right)}^{1/2}.
    \end{align*}

    Now we notice that 
    \[
        \int_{\mathbb{R}^d}
        \Gamma_2(u)\dd \varrhovarphi 
        = 
        \frac{1}{2}
        \int_{\mathbb{R}^d}
        L|\nabla u|^2\dd \varrhovarphi  - 
        \int_{\mathbb{R}^d} \inner{\nabla u, \nabla Lu}\dd \varrhovarphi 
        = 
        - \int_{\mathbb{R}^d} \inner{\nabla u, \nabla Lu}\dd \varrhovarphi 
    \]
    since the integrand of the first integral is a divergence. In addition, by definition of $\Gamma_2$ we have that 
    \begin{align*}
        \int_{\mathbb{R}^d} 
        \inner{{D^2\varphi}\nabla u, \nabla u}
        \dd \varrhovarphi 
        &= 
        \int_{\mathbb{R}^d}
        \Gamma_2(u)\dd \varrhovarphi 
        - 
        \int_{\mathbb{R}^d}
        \norm{D^2u}^2_F\dd \varrhovarphi \\ 
        &= 
        - \int_{\mathbb{R}^d} \inner{\nabla u, \nabla Lu}\dd \varrhovarphi
         - 
        \int_{\mathbb{R}^d}
        \norm{D^2u}^2_F\dd \varrhovarphi \\ 
        &= 
        \Var_{\varrhovarphi}(f) - \int_{\mathbb{R}^d}
        \norm{D^2u}^2_F\dd \varrhovarphi. 
    \end{align*}

    Plugging it back into the original estimations of $\Var_{\varrhovarphi}(f)$, we notice that 
    \begin{align*}
        \Var_{\varrhovarphi}(f) 
        &\le 
        {\left(
            \int_{\mathbb{R}^d} 
            \inner{{(D^2\varphi)}^{-1}\nabla f,\nabla f}
            \dd \varrhovarphi
        \right)}^{1/2}
        {\left(
            \Var_{\varrhovarphi}(f) - \int_{\mathbb{R}^d}
            \norm{D^2u}^2_F\dd \varrhovarphi
        \right)}^{1/2}
    \end{align*}
    which gives inequality~\eqref{eq.brascamp_lieb}. This proof is also enlightening on the equality cases; indeed notice that the chain of inequalities are all equalities, except for the application of a Cauchy-Scharwz and the non-positive term on $-\norm{D^2u}_F^2$. 
    
    As a result, we have equality if, and only if, 
    \[
        D^2u \equiv 0 \text{ and } 
        {(D^2\varphi)}^{-1/2}\nabla f \parallel
        {(D^2\varphi)}^{1/2}\nabla u. 
    \] 
    The first condition implies that $u = a\cdot x + b$ for constant $a \in \mathbb{R}^d$ and $b \in \mathbb{R}$. Since $u$ is a classical solution of the equation $L u = \bar f$, it follows that, to have equality $f$ must satisfy
    \[
        f = a\cdot \nabla \varphi(x) + \mathbb{E}_{\varrhovarphi}f.
    \]
    The result follows. 
\end{proof}

\subsection{Sharp quantitative stability in $L^1$}\label{sec.stability_L^1}
With the knowledge of the equality case for~\eqref{eq.brascamp_lieb}, we now aim at a quantitative stability version of this inequality in terms of the $L^1$ distance to the optimal set (see Remark~\ref{remark.comparison_withCE}), as highlighted in Theorem \ref{thm.brascamp_lieb_quantitative_intro}. We state that result below once more for the reader's convenience.

\begin{theorem}\label{thm.brascamp_lieb_quantitative}
    There exists a universal constant $C_d$ depending only on the ambient dimension such that for any convex function $\varphi : \mathbb{R}^d \to \mathbb{R}\cup\{+\infty\}$ such that
    \[
        0 < \int_{\mathbb{R}^d} e^{-\varphi} < +\infty 
        \text{ and }
        \Theta(\mu_\varphi) > 0,
    \]
    we have that
    \[
        \dist_{L^1(\varrhovarphi)}(f, \mathcal{O}_{\BL})
        \le 
        C_d {\delta_{\BL}(f)}^{1/2}
    \]
    for all locally Lipschitz functions $f \in L^2(\varrhovarphi)$.   {Moreover, the inequality above is sharp, in the sense that the factor $\delta_{\BL}(f)^{1/2}$ \emph{cannot} be replaced by $\delta_{\BL}(f)^{\alpha}$ for any $\alpha > 1/2.$}
\end{theorem}

Our proof of Theorem \ref{thm.brascamp_lieb_quantitative} will exploit, as mentioned in the introduction, the linearization argument from Bobkov-Ledoux~\cite{bobkov2000brunn}, while employing the main results from \cite{figalli2025sharp} in order to control the deficit. While this allows us to take several important steps towards the proof, the crucial ingredient is the geometry induced by the non-concentration of the moment measure. 

\begin{proof}
    We first prove the result in the case that $\varphi$ is strongly convex. That is, we assume that there exists $\alpha > 0$ such that $D^2\varphi \ge \alpha \id$, where the gradients and hessians $\nabla \varphi, D^2\varphi$ are understood in the sense of Alexandrov (see for instance~\cite[Chapter 6]{evans2015measure}). Therefore, they are $\mathscr{L}^d$ and $\varrhovarphi$~a.e.~well defined. 
    
    Given $f \in L^2(\varrhovarphi)$, it can be arbitrarily approximated in $L^2(\varrhovarphi)$ by a sequence of smooth functions, so for the moment we assume $f \in \mathscr{C}^\infty_c(\mathbb{R}^d)$. As a result, from the $\alpha$-strong convexity of $\varphi$ there exists $\delta_0 > 0$ such that for all $\delta < \delta_0$ the function $2\delta f - \varphi$ is concave. 

    Our goal is to apply the quantitative stability version of the Prékopa-Leindler inequality with convexity parameter $s = 1/2$. In order to do so, we consider the functions
    \[
              u_\delta \eqdef e^{2\delta f - \varphi}, 
        \quad v_\delta \eqdef e^{-\varphi}, 
        \quad w_\delta \eqdef e^{f_\delta - \varphi},
    \]
    where $f_\delta(z)$ is pointwise defined as 
    \begin{align*}
        f_\delta(z) 
        &= 
        \sup_{ z = \frac{x+y}{2}} 
        \delta f(x) - 
        \left[
            \frac{1}{2}\varphi(x) + \frac{1}{2}\varphi(y) 
            -\varphi\left(\frac{x+y}{2}\right)  
        \right]\\
        &= 
        \sup_{h \in \mathbb{R}^d} 
        \delta f(z + h) - 
        \left[
            \frac{1}{2}\varphi(z+h) + \frac{1}{2}\varphi(z-h) 
            -\varphi(z)
        \right]. 
    \end{align*}

    It is not too hard to check that the triple of functions $u_\delta,v_\delta,w_\delta$ satisfies the condition in Prékopa--Leindler's inequality. Hence, as the total mass of $v_\delta$ is $1$ by assumption, the standard Prékopa-Leindler inequality holds and we can define the non-negative quantity $\varepsilon_\delta$ such that 
    \begin{equation}
        \int_{\mathbb{R}^d}w_\delta \dd x 
        = 
        (1 + \varepsilon_\delta){\left(
            \int_{\mathbb{R}^d}u_\delta \dd x
        \right)}^{1/2}, \quad 
        \varepsilon_\delta \eqdef 
        \frac{
            \int_{\mathbb{R}^d}w_\delta \dd x  - 
            {\left(
                \int_{\mathbb{R}^d}u_\delta \dd x
            \right)}^{1/2}
        }{{\left(
            \int_{\mathbb{R}^d}u_\delta \dd x
        \right)}^{1/2}}. 
    \end{equation} 

    Analogously as in Bobkov-Ledoux~\cite{bobkov2000brunn}, our argument to obtain the sharp quantity version of the Brascamp-Lieb inequality will  consist of studying the dependence on $\delta$ of the above quantities. Starting with the integral of $u_\delta$, we get the following Taylor expansion on $\delta$: 
    \begin{equation}
        \int_{\mathbb{R}^d}u_\delta \dd x 
        = 
        1 + 2\delta \int_{\mathbb{R}^d}f\dd \varrhovarphi 
        + 2\delta^2 \int_{\mathbb{R}^d}f^2\dd \varrhovarphi + o(\delta^2).
    \end{equation}

    For the expansion on the integral of $w_\delta$, recall the classical result from log-concave measures that 
    \[
        \text{ if } 
        \int_{\mathbb{R}^d} e^{-\varphi} < +\infty, 
        \text{ then $\varphi$ is coercive.}
    \]
    Therefore, the quantity being maximized in the definition of $f_\delta$ is strongly concave and goes to $-\infty$ as $h \to +\infty$. Hence for $\varrhovarphi$-a.e.~$z$ let $h_\delta = h_\delta(z)$ be optimal for the supremum in $f_\delta(z)$. The optimality conditions for $h_\delta$ give, for $\varrhovarphi$-a.e.~point where the gradients and hessian of $\varphi$ are uniquely defined, that
    \begin{align*}
        \delta \nabla f(z+h_\delta) 
        &= 
        \frac{1}{2}\left[  
            \nabla\varphi(z+h_\delta) - \nabla\varphi(z-h_\delta) 
        \right]
        = D^2\varphi(z)h_\delta + o(\delta). 
    \end{align*}
    Since $D^2\varphi \ge \alpha \id$, the hessian is invertible and ${(D^2\varphi)}^{-1} \le \alpha^{-1}\id$, and developing the Taylor expansion for $\nabla f$ in the identity above we obtain that
    \[
        h_\delta = \delta {(D^2\varphi)}^{-1}\nabla f(z) + o(\delta).
    \]
    Note that we have critically used the hessian bound above in order to be able to expand $h_\delta$ until first order in $\delta.$ In particular $h_\delta = O(\delta)$, uniformly in $z$. As a result, we can expand the value of $f_\delta(z)$ as 
    \[
        f_\delta(z) 
        = 
        \delta f(z) + \frac{\delta^2}{2}\inner{{(D^2\varphi)}^{-1}\nabla f, \nabla f} + o(\delta^2).
    \]
    Therefore, with a similar argument to the expansion of the integral of $u_\delta$, we get 
    \begin{align*}
       \int_{\mathbb{R}^d}w_\delta \dd x  
       &= \int_{\mathbb{R}^d} e^{f_\delta} \dd \varrhovarphi\\ 
       &= 
       1 + \delta \int_{\mathbb{R}^d} f \dd\varrhovarphi
       + \frac{\delta^2}{2} \int_{\mathbb{R}^d} f^2 \dd \varrhovarphi
       + \frac{\delta^2}{2} \int_{\mathbb{R}^d} 
       \inner{{(D^2\varphi)}^{-1}\nabla f, \nabla f} \dd \varrhovarphi + o(\delta^2).
    \end{align*}

    In addition, using the Taylor expansion ${(1 + s)}^{1/2} = 1 +\frac{1}{2}s - \frac{1}{8}s^2 + o(s^2)$ and the expansion for the integral of $u_\delta$ we obtain that
    \[
        {
        \left(
            \int_{\mathbb{R}^d}u_\delta \dd x 
        \right)
        }^{1/2} 
        = 
        1 + \delta \int_{\mathbb{R}^d} f \dd\varrhovarphi 
        + \delta^2\int_{\mathbb{R}^d} f^2 \dd \varrhovarphi
        -\frac{\delta^2}{2}{\left( \int_{\mathbb{R}^d} f \dd\varrhovarphi \right)}^2 + o(\delta^2).
    \]

    Joining these estimates, we obtain the following expansion for $\varepsilon_\delta$: 
    \begin{align*}
        \varepsilon_\delta
        &= 
        {\left(\int_{\mathbb{R}^d}u_\delta \dd x\right)}^{-1/2} 
        \left(
            \int_{\mathbb{R}^d}w_\delta \dd x - {\left(\int_{\mathbb{R}^d}u_\delta \dd x\right)}^{1/2}
        \right)\\ 
        &=  
        (1 + O(\delta))
        \frac{\delta^2}{2} 
        \left[
            \int_{\mathbb{R}^d} \inner{{(D^2\varphi)}^{-1}\nabla f, \nabla f} \dd \varrhovarphi 
            - 
            \Var_{\varrhovarphi}(f) + \frac{o(\delta^2)}{\delta^2}
        \right]\\ 
        &= 
        (1 + O(\delta))
        \frac{\delta^2}{2} 
        \left[
            \delta_{\BL}(f)
            + \frac{o(\delta^2)}{\delta^2}
        \right]
    \end{align*}

    From the quantitative stability of the Prékopa-Leindler inequality we obtain that there exists a universal constant $C_d$, depending only on the dimension and on the convexity parameter which is fixed $s = 1/2$, and some vector $x_\delta \in \mathbb{R}^d$ such that 
    \begin{equation}\label{eq.estimation_eps_delta}
        \int_{\mathbb{R}^d} 
        \left|
            \frac{u_\delta(x)}{\int_{\mathbb{R}^d} u_\delta} - e^{-\varphi(x + x_\delta)}
        \right|\dd x
        \le 
        C_d \varepsilon_\delta^{1/2}.
    \end{equation}
    Taking into account the scaling of $\varepsilon_\delta$, which behaves as $\delta^2$, we see that the right-hand side of the above inequality is of the order $\delta$. Developing the left-hand side depends strongly on the behavior of $x_\delta$ when $\delta\to 0$. Clearly it must hold that $x_\delta \to 0$ as $\delta \to 0$, as a result we obtain that
    \begin{align*}
        \int_{\mathbb{R}^d}& 
        \left|
            \frac{u_\delta(x)}{\int_{\mathbb{R}^d} u_\delta} - e^{-\varphi(x + x_\delta)}
        \right|\dd x 
        = 
        {\left(\int_{\mathbb{R}^d} u_\delta\right)}^{-1}
        \int_{\mathbb{R}^d} 
        \left|
            e^{2\delta f} - e^{-( \varphi(x + x_\delta) - \varphi(x))}\int_{\mathbb{R}^d} u_\delta
        \right|\dd \varrhovarphi \\ 
        &= 
        (1 + O(\delta))
        \int_{\mathbb{R}^d} 
        \left|
            1 + 2\delta f + o(\delta) - 
            (1 + \nabla\varphi(x)\cdot x_\delta + o(x_\delta))\left(   
            1 + 2\delta \mathbb{E}_{\varrhovarphi} f + o(\delta)
            \right)
        \right|\dd \varrhovarphi\\ 
        &= 
        (1 + O(\delta))2\delta
        \int_{\mathbb{R}^d} 
        \left|
                f - \nabla\varphi(x)\cdot \frac{x_\delta}{2\delta} - \mathbb{E}_{\varrhovarphi}f
        \right|\dd \varrhovarphi
        + o(\delta + x_\delta). 
    \end{align*}
 
    Therefore, in order to obtain a meaningful estimate in terms of $\dist_{L^1(\varrhovarphi)}(f, \mathcal{O}_{\BL})$, we must show that $x_\delta/2\delta$ has a cluster point as $\delta \to 0^+$. For this, define the sets
    \[
        \Omega_n \eqdef \left\{
            x \in \dom \varphi : 
            D^2\varphi(x) \le n \id
        \right\}. 
    \]
    From the convexity of $\varphi$, for every $n \in \mathbb{N}$ we can choose $\delta$ sufficiently small so that $\varphi(x+x_\delta) - \varphi(x) = O(x_\delta)$. More precisely, there is $\delta_n$ such that for $\delta \le \delta_n$ we have
    \begin{align*}
        \norm{v_\delta - v_\delta(\cdot + x_\delta)}_{L^1(\mathbb{R}^d)} 
        &\ge  
        \norm{v_\delta - v_\delta(\cdot + x_\delta)}_{L^1(\Omega_n)} \\ 
        &= 
        \int_{\Omega_n} 
        \left|
            x_\delta\cdot\nabla\varphi(x) + 
            \frac{1}{2} \inner{D^2\varphi(x)x_\delta,x_\delta} 
            + o(x_\delta^2)
        \right|\dd \varrhovarphi \\
        &\ge 
        \frac{1}{2} 
         \int_{\Omega_n}
        |x_\delta\cdot\nabla\varphi(x)|\dd \varrhovarphi.     
    \end{align*}
    
    Using a suitable triangle inequality, we get for $\delta \le \delta_n$
    \begin{align*}
        \frac{1}{2} 
        \int_{\Omega_n}
        |x_\delta\cdot\nabla\varphi(x)|\dd \varrhovarphi
        &\le 
        \norm{v_\delta - v_\delta(\cdot + x_\delta)}_{L^1(\mathbb{R}^d)}\\
        &\le 
        \norm{v_\delta - u_\delta}_{L^1(\mathbb{R}^d)} 
        + 
        \norm{u_\delta - v_\delta(\cdot + x_\delta)}_{L^1(\mathbb{R}^d)} \le C\delta,
    \end{align*} 
    where the bound $\norm{v_\delta - u_\delta}_{L^1(\mathbb{R}^d)}  = O(\delta)$ follows directly from the definition of $u_\delta$ as a perturbation of $v_\delta$ and $\norm{u_\delta - v_\delta(\cdot + x_\delta)}_{L^1(\mathbb{R}^d)} = O(\delta)$ is a consequence of the previous estimations on the deficit of the Prékopa-Leindler inequality. But then we conclude that the quantity
    \[
        \int_{\Omega_n}
        \left|
            \frac{x_\delta}{2\delta}\cdot\nabla\varphi(x)
        \right|\dd \varrhovarphi
    \]
    must remain uniformly bounded for $\delta \le \delta_n$. Now we exploit the assumption that the moment measure $\mu_\varphi$ has full-dimensional support. Since $x_\delta/\norm{x_\delta} \in \mathbb{S}^{d-1}$, we can find a  subsequence of $\delta_n \cvstrong{k \to \infty}{}0$ such that $x_{\delta_n}/\norm{x_{\delta_n}} \cvstrong{k \to \infty}{} \bar \theta$, for some $\bar \theta \in \mathbb{S}^{d-1}$. But then using Lebesgue's dominated convergence theorem and the fact that $\mathds{1}_{\Omega_n}$ converges strongly in $L^1(\varrhovarphi)$ to $1$, we get that 
    \begin{align*}
        C
        &\ge \limsup_{n \to \infty} 
        \int_{\Omega_n}
        \left|
            \frac{x_{\delta_n}}{2{\delta_n}}\cdot\nabla\varphi(x)
        \right|\dd \varrhovarphi
        = 
        \limsup_{n \to \infty} 
        \frac{\norm{x_{\delta_n}}}{2{\delta_n}}
        \int_{\Omega_n}
        \left|
            \frac{x_{\delta_n}}{\norm{x_{\delta_n}}}\cdot\nabla\varphi(x)
        \right|\dd \varrhovarphi\\
        &= 
        \left(
            \int_{\mathbb{R}^d}
            |\bar \theta \cdot y|\dd \mu_\varphi(y)
        \right)
        \limsup_{n \to \infty} 
        \frac{\norm{x_{\delta_n}}}{2{\delta_n}}
        \ge 
        \Theta(\mu_\varphi) \limsup_{n \to \infty} 
        \frac{\norm{x_{\delta_n}}}{2{\delta_n}}
    \end{align*}
    Since $\Theta(\mu_\varphi) > 0$, we conclude that $x_{\delta_n}/\delta_n$ remains bounded as $n \to \infty$. 

    Therefore, we can extract a subsequence $\delta_n \cvstrong{k \to \infty}{}0$ such that $\frac{x_{\delta_n}}{2\delta_n} \cvstrong{k \to \infty}{} a$, for some vector $a \in \mathbb{R}^d$. Coming back to the estimate on $\varepsilon_\delta$ from~\eqref{eq.estimation_eps_delta}, dividing both sides by $\delta$ and passing to the limit of these estimates as $\delta_n \to 0$ we obtain that
    \begin{align*}
        \dist_{L^1(\varrhovarphi)}(f,\mathcal{O}_{\BL}) 
        & \le 
        \int_{\mathbb{R}^d} 
        \left|
            \left(
                f - \nabla\varphi(x)\cdot a- \mathbb{E}_{\varrhovarphi}f
            \right)
        \right|\dd \varrhovarphi\\
        & \le 
        \liminf_{n \to \infty}
        (1 + O(\delta_n))
        \int_{\mathbb{R}^d} 
        \left|
            \left(
                f - \nabla\varphi(x)\cdot \frac{x_{\delta_n}}{2\delta_n} - \mathbb{E}_{\varrhovarphi}f
            \right)
        \right|\dd \varrhovarphi
        + o(\delta_n)\\ 
        & \le 
        \limsup_{n \to \infty} 
        C_d
        {\left(
            \delta_{\BL}(f)
            + \frac{o(\delta_n^2)}{\delta_n^2}
        \right)}^{1/2}\\
        & = 
        C_d {\delta_{\BL}(f)}^{1/2},
    \end{align*}
    and the result follows when $\varphi$ is $\alpha$-strongly convex and $f \in L^2(\varrhovarphi)\cap \mathscr{C}^{\infty}_c(\mathbb{R}^d)$. By approximation, it follows also for all $f \in L^2(\varrhovarphi)$. 

    For an arbitrary $\varphi$ convex potential such that $\Theta(\mu_\varphi) > 0$, we must finish the proof with another approximation argument, this time on the convex potential. Contrarily to what sometimes happens, this approximation step turns out to be non-trivial and also uses the geometry of moment measures in a decisive way.
    
    In order to make it clearer, we modify the notation used previously in order for it to emphasize the dependence on the potential $\varphi$. Hence, we let
    \[
        \delta_{\BL}(f;\varphi) 
        \text{ and }
        \dist_{L^1(\varrhovarphi)}(f,\mathcal{O}_{\BL,\varphi})
    \]
    denote the deficit of the Brascamp-Lieb inequality and the distance to optimal functions with respect to the potential $\varphi$, respectively. Our strategy is to add a little quadratic regularization by defining $\varphi_\alpha \eqdef \varphi + \frac{\alpha}{2}|\cdot|^2$. The result will follow if we prove that the deficit if upper semi-continuous 
    \begin{equation}\label{eq.usc_defict}
        \limsup_{\alpha \to 0^+} \delta_{\BL}(f;\varphi_\alpha) \le \delta_{\BL}(f;\varphi),
    \end{equation}
    and that the distance to optimal functions is lower semi-continuous 
    \begin{equation}\label{eq.lsc_distance}
        \dist_{L^1(\varrhovarphi)}(f,\mathcal{O}_{\BL,\varphi}) \le 
        \liminf_{\alpha \to 0^+} \dist_{L^1(\varrhovarphi)}(f,\mathcal{O}_{\BL,\varphi_\alpha}). 
    \end{equation}

    For the first identity~\eqref{eq.usc_defict}, first notice that the Gibbs measures $\varrho_{\varphi_\alpha}$ converge pointwise, even uniformly, to $\varrhovarphi$ as $\alpha \to 0^+$. Then, for any $f \in L^2(\varrhovarphi)$, convergence of the variance follows from Lebesgue's dominated convergence theorem:
    \[
        \Var_{\varrho_{\varphi_\alpha}}(f) \cvstrong{\alpha \to 0^+}{}
        \Var_{\varrho_{\varphi}}(f). 
    \]
    For the positive term in the deficit, notice that $D^2\varphi_\alpha = D^2\varphi + \alpha \id \ge D^2\varphi$, hence $D^2\varphi_\alpha$ is invertible and it follows that ${\left(D^2\varphi_\alpha\right)}^{-1} \le {\left(D^2\varphi\right)}^{-1}$ in the partial ordering of positive semi-definite matrices, even if $D^2\varphi$ becomes singular. As a result, assuming that $\delta_{\BL}(f;\varphi)<+\infty$, we have for all $\alpha > 0$ and $f \in L^2(\varrhovarphi)$ that
    \[
        \int_{\mathbb{R}^d} 
        \inner{{\left(D^2\varphi_\alpha\right)}^{-1}\nabla f, \nabla f}\dd \varrho_{\varphi_\alpha} 
        \le 
        \int_{\mathbb{R}^d} 
        \inner{{\left(D^2\varphi \right)}^{-1}\nabla f, \nabla f}\dd \varrho_{\varphi_\alpha} 
    \]
    and passing to the limit as $\alpha \to 0^+$ yields~\eqref{eq.usc_defict}. 

    Moving on to the second identity~\eqref{eq.lsc_distance}, we shall need the following result: 
    
    \begin{lemma}\label{lem.coercivity-nice-case} Let $f \in L^2(\varrhovarphi)$ be a given function, where $\varphi$ is as above. We have that the function 
    \[
        (a,b) 
        \mapsto 
        \norm{f -(a\cdot \nabla \varphi_\alpha + b)}_{L^1(\varrho_{\varphi_\alpha})}
    \]
    is convex, due to the triangle inequality, and coercive.   
    \end{lemma} 
    
    \begin{proof} { Suppose that we take a sequence $(a_n,b_n) \to \infty$ in the topology of $\R^d \times \R$ such that $\|f - (a_n \cdot \nabla \varphi_\alpha + b_n)\|_{L^1(\varrho_{\varphi_\alpha})}$ converges to $0$. Then one of the following three possibility happens: \\
    
    (i) either $a \to \infty$ while $b$ remains bounded, in which case we have that 
    \[
    \|f - (a_n \cdot \nabla \varphi_{\alpha} +b_n) \|_{L^1(\varrho_{\varphi_\alpha})} \ge \| a_n \cdot \nabla \varphi_{\alpha}\|_{L^1(\varrho_{\varphi_\alpha})} - \|f - b_n\|_{L^1(\varrho_{\varphi_\alpha})} \ge \| a_n \cdot \nabla \varphi_{\alpha}\|_{L^1(\varrho_{\varphi_\alpha})} - C(f),
    \]
    where $C(f)$ denotes a positive, absolute constant independent of $n$. Now, we recall that 
    \[
    \|a_n \cdot \nabla \varphi_\alpha\|_{L^1(\varrho_{\varphi_\alpha})} \ge |a_n| \Theta(\mu_{\varphi_\alpha}) \ge c \cdot |a_n| \to \infty,
    \]
    which contradicts the boundedness of the function along the sequence $(a_n,b_n);$ \\ 
    
    (ii) or $a_n$ remains bounded while $b_n \to \infty$, in which case
    \[
    \|f - (a_n \cdot \nabla \varphi_{\alpha} +b_n) \|_{L^1(\varrho_{\varphi_\alpha})} \ge \|b_n\|_{L^1(\varrho_{\varphi_\alpha})} - \|f\|_{L^1(\varrho_{\varphi_\alpha})} - \|a_n \cdot \nabla \varphi_\alpha\|_{L^1(\varrho_{\varphi_\alpha})} \to \infty,
    \]
    a contradiction again; \\

    (iii) or both are unbounded as $n \to \infty.$ In that case, we claim that we may suppose that the ratio $b_n/|a_n|$ converges. Indeed, if that was not the case, then we would have that $b_n/|a_n| \to \infty$, which would imply that
    \begin{align}\label{eqn:coercive}
    \|f- (a_n \cdot \nabla \varphi_\alpha + b_n)\|_{L^1(\varrho_{\varphi_\alpha})} & \ge |a_n| \left\| \frac{a_n}{|a_n|} \cdot \nabla \varphi_{\alpha} + \frac{b_n}{|a_n|} \right\|_{L^1(\varrho_{\varphi_\alpha})}  - \|f\|_{L^1(\varrho_{\varphi_\alpha})} \cr 
    & \ge |a_n| \cdot \left( \frac{b_n}{|a_n|} -  \Theta(\mu_{\varphi_\alpha},a_n/|a_n|)\right) - \|f\|_{L^1(\varrho_{\varphi_\alpha})} \to \infty,
    \end{align}
    since we know that $\Theta(\mu_{\varphi_{\alpha}},a_n/|a_n|)$ is bounded from above and below independently of $n$. Since $b_n/|a_n|,a_n/|a_n|$ may be taken to be convergent, it follows again by the computation in \eqref{eqn:coercive} that 
    \begin{align}\label{eqn:coercive2}
    \frac{1}{|a_n|}\|f- (a_n \cdot \nabla \varphi_\alpha + b_n)\|_{L^1(\varrho_{\varphi_\alpha})} & \ge  \left\| \frac{a_n}{|a_n|} \cdot \nabla \varphi_{\alpha} + \frac{b_n}{|a_n|} \right\|_{L^1(\varrho_{\varphi_\alpha})}  - \frac{1}{|a_n|} \|f\|_{L^1(\varrho_{\varphi_\alpha})} \cr 
    & \to \| \nu \cdot \nabla \varphi_\alpha + \theta\|_{L^1(\varrho_{\varphi_\alpha})}. 
    \end{align}
    It follows by the hypotheses on boundedness of the function at hand that the right-hand side of the equation above has to vanish, and hence 
    \[
    \nu \cdot \nabla \varphi_\alpha + \theta \equiv 0. 
    \]
    On the other hand, this last fact implies that either the function $\varphi_\alpha$ is affine in the direction $\nu,$ which is impossible due to the convexity of $\varphi$ and the fact that $\int e^{-\varphi_\alpha} < + \infty,$ or we have $\theta \equiv 0,$ which again leads us to a contradiction when taking the fact that $\Theta(\mu_\alpha) > 0$ and \eqref{eqn:coercive2} into account. }
    \end{proof}

So the infimum is attained at some $(a_\alpha,b_\alpha)$. 
In addition,  we will need the next result. 

\begin{lemma}\label{lem.boundedness-parameters} Let $(a_\alpha,b_\alpha)$ denote the pair of points in $\R^n \times \R$ where the infimum of 
\[
  \inf_{a,b} \norm{f -(a\cdot \nabla \varphi_\alpha + b)}_{L^1(\varrho_{\varphi_\alpha})}
\]
is attained. Then the set $\{(a_\alpha,b_{\alpha})\}_{\alpha \in (0,1)}$ is bounded. 
\end{lemma}

\begin{proof} This is due to the fact that, if that were not the case, then by employing a similar argument to the one used to prove coercivity of the function $(a,b) \mapsto \|f - a\cdot \nabla \varphi - b\|_{L^1(\varrho_{\varphi_\alpha})}$, we would have that 
   \begin{align}\label{eqn:coercive3}
    \|f- (a_\alpha \cdot \nabla \varphi_\alpha + b_\alpha)\|_{L^1(\varrho_{\varphi_\alpha})} & \ge |a_\alpha| \left( \left\| \frac{a_\alpha}{|a_\alpha|} \cdot \nabla \varphi + \frac{b_\alpha}{|a_\alpha|} \right\|_{L^1(\varrho_{\varphi_\alpha})} - \alpha \|x\|_{L^1(\varrhovarphi)} \right) - \|f\|_{L^1(\varrho_{\varphi_\alpha})} \cr 
    & \ge \frac{|a_\alpha|}{2} \cdot \left( \left\| \nu \cdot \nabla \varphi + \theta \right\|_{L^1(\varrho_{\varphi_\alpha})} - \alpha\|x\|_{L^1(\varrho_{\varphi_\alpha})}\right) - \|f\|_{L^1(\varrho_{\varphi_\alpha})} \to \infty,
    \end{align}
since we may always suppose without loss of generality that, upon passing to a subsequence, $a_{\alpha}/|a_\alpha|$ and $b_{\alpha}/|a_{\alpha}|$ both converge to $\nu$ and $\theta,$ respectively. This shows that we may assume $\{a_\alpha\}_{\alpha \in (0,1)}$ and $\{b_{\alpha}\}_{\alpha \in (0,1)}$ to be bounded.    \end{proof}

By Lemma \ref{lem.boundedness-parameters}, we can assume w.l.o.g.~that $a_\alpha \cvstrong{\alpha\to 0^+}{}\bar a$ and $b_{\alpha} \cvstrong{\alpha\to 0^+}{}\bar b$. It then follows from Fatou's Lemma that
    \begin{align*}
        \dist_{L^1(\varrhovarphi)}(f,\mathcal{O}_{\BL,\varphi})
        &\le 
        \norm{f -(\bar a\cdot \nabla \varphi + \bar b)}_{L^1(\varrho_{\varphi})}
        \le 
        \liminf_{\alpha \to 0^+} 
        \dist_{L^1(\varrho_{\varphi_{\alpha}})}(f,\mathcal{O}_{\BL,\varphi_\alpha}).
    \end{align*}
    The main inequality in Theorem \ref{thm.brascamp_lieb_quantitative} is now proved. In order to show that such an inequality is, moreover, \emph{sharp}, we take any $f_0 \not\in \text{span}(\{a \cdot \nabla \varphi + b\}_{a \in \R^d, b \in \R}).$ We claim that 
    \[
    \dist_{L^1(\varrhovarphi)}(f_0,\mathcal{O}_{\BL,\varphi}) > 0.
    \]
    Indeed, by coercivity it follows that, given $f_0$ fixed, there are $a_0 \in \R^n, b_0 \in \R$ such that 
    \[
    \dist_{L^1(\varrhovarphi)}(f_0,\mathcal{O}_{\BL,\varphi}) = \|f - (a_0 \cdot \nabla \varphi + b_0) \|_{L^1(\varrho_{\varphi_\alpha})}. 
    \]
    If the distance were $0,$ it would mean that $f_0 \in \text{span}(\{a \cdot \nabla \varphi + b\}_{a \in \R^d, b \in \R}),$ which is not the case. 
    
    We now consider the functions $f_{\varepsilon} = \varepsilon \cdot f_0$. On the one hand, note that $\delta_{\BL}(f_\varepsilon) = \varepsilon^2 \cdot \delta_{\BL}(f_0).$ On the other hand, since $\text{span}(\{a \cdot \nabla \varphi + b\}_{a \in \R^d, b \in \R}) = \mathcal{O}_{\BL,\varphi}$ is a \emph{linear} space, we have that $\dist_{L^1(\varrhovarphi)}(f_\varepsilon,\mathcal{O}_{\BL,\varphi}) = \varepsilon \cdot \dist_{L^1(\varrhovarphi)}(f_0,\mathcal{O}_{\BL,\varphi}).$ Since the right-hand side in the latter formula is a fixed positive constant depending only on $f_0$ times $\varepsilon,$ we get that an inequality such as 
     \[
        \dist_{L^1(\varrhovarphi)}(f, \mathcal{O}_{\BL})
        \le 
        C_d {\delta_{\BL}(f)}^{\alpha}
    \]
    could only possibly hold for $f = f_\varepsilon$, as $\varepsilon \to 0,$ if $\alpha \leq \frac{1}{2}.$ This concludes our proof. 
\end{proof}

\subsection{On the stability constant in the $L^p$-topology for $1 < p < +\infty$}\label{sec.counter_example}

    In this paragraph we show that the $L^1$-topology is sharp for the quantitative stability of the Brascamp-Lieb inequality with respect to the independence of the stability constant from the convex potential. We construct a one-parameter family of potentials ${(\varphi_{\varepsilon,d})}_{\varepsilon > 0}$ and of test functions for the inequality ${(f_{\varepsilon,d})}_{\varepsilon > 0}$ demonstrating this behavior. 

    To emphasize the role of the convex potential and facilitate the computations, in this paragraph we let
    \[
        \delta_{\BL}(f;\varphi) 
        \eqdef  
        \mathcal{E}_{\varrho_\varphi}(f) -
        \Var_{\varrhovarphi}(f), 
        \text{ where }
        \mathcal{E}_{\varrho_\varphi}(f) \eqdef 
        \int_{\mathbb{R}^d}
        \inner{(D^2\varphi)^{-1}\nabla f, \nabla f }\dd \varrho_\varphi
    \]
    denotes the deficit of the Brascamp-Lieb inequality associated with $\varphi$, 
    \[
        \dist_{L^p(\varrhovarphi)}(f;\mathcal{O}_{\BL,\varphi}) \eqdef 
        \inf_{u \in \mathcal{O}_{\BL}(\varphi)} \norm{f - u}_{L^p(\varrhovarphi)}, 
        \text{ with } 
        \mathcal{O}_{\BL}(\varphi) 
        \eqdef 
        \left\{
            a\cdot\nabla\varphi + b: 
            (a,b) \in \mathbb{R}^d\times\mathbb{R}
        \right\},
    \]
    denotes the $L^p$-distance to optimal functions, and we also define the space of convex functions over $\mathbb{R}^d$ as 
    \[
        \Cvx_d \eqdef 
        \left\{
            \varphi : \mathbb{R}^d\to \mathbb{R} : 
            \varphi \text{ is strongly convex}
        \right\}. 
    \]
    Notice that the class of convex functions for which our $L^1$ stability result holds with a universal constant is much larger than $\Cvx_d$. 
    
\begin{theorem}\label{thm.constant_depends_pot_Lp}
    For every $d\ge 1$ and $1 < p < +\infty$ it holds that 
    \begin{equation}\label{eq.inf_stability_constant_Lp}
        \inf_{
            \substack{
                f \in L^2(\varrho_\varphi)\\ 
                \varphi \in \Cvx_d 
            }
        }
        \frac{{\delta_{\BL}(f, \varrho_\varphi)}^{1/2}}
        {\dist_{L^p(\varrho_\varphi)}(f;\mathcal{O}_{\BL}(\varphi))} = 0.
    \end{equation}

    In particular, there is no universal constant $0<C<+\infty$ for which it holds that
    \[
        \dist_{L^p(\varrho_\varphi)}(f;\mathcal{O}_{\BL}(\varphi)) 
        \le 
        C {\delta_{\BL}(f, \varrho_\varphi)}^{1/2} 
        \text{ for all $f \in L^2(\varrho_\varphi)$}
    \]
    with $C$ independent of $\varphi \in \Cvx_d$. 
\end{theorem}

The proof is based on the following construction. 
\begin{lemma}\label{lemma.counter_example}
    There is a family of strongly convex functions ${\left(\varphi_{\varepsilon,d}\right)}_{\varepsilon > 0} \subset \Cvx_d$ and a family of Lipschitz functions ${\left(f_{\varepsilon,d}\right)}_{\varepsilon > 0}$ such that $f_{\varepsilon,d} \in L^2(\varrho_{\varepsilon,d})$ for all $\varepsilon > 0$ satisfying 
    \begin{enumerate}
        \item $\delta_{\BL}(f_{\varepsilon,d}, \varrho_{\varepsilon,d}) = \varepsilon^2 + O(\varepsilon^{3})$
        \item ${\dist_{L^p(\varphi_{\varepsilon,d})}(f_{\varepsilon,d}, \mathcal{O}_{\BL}(\varphi_{\varepsilon,d}))}^p \gtrsim \varepsilon + O(\varepsilon^{2})$ \text{ for all $1 < p < +\infty$}. 
    \end{enumerate}
\end{lemma}
\begin{proof}[\underline{\textbf{Proof for $d=1$:}}]
    Fix a non-negative even $C^\infty$ mollifier $\kappa:\R\to[0,\infty)$ normalized such that $\norm{\kappa}_{L^1(\mathbb{R})} = 1$ and supported in $[-1,1]$. For a given $\varepsilon > 0$ we define the one-dimensional potential as
    \begin{equation}\label{eq.phi}
      \varphi''_{\varepsilon}(x)
      \eqdef 
      \varepsilon
      +\varepsilon^{-2}\kappa\!\left(\frac{x-1}{\varepsilon}\right)
      +\varepsilon^{-2}\kappa\!\left(\frac{x+1}{\varepsilon}\right),
    \end{equation}
    and impose the normalization
    \[
      \varphi_{\varepsilon}(0)=0,
      \qquad
      \varphi_{\varepsilon}'(0)=0.
    \]
    The right-hand side of~\eqref{eq.phi} is smooth, even, and bounded below by
    $\varepsilon$, so $\varphi_{\varepsilon}$ is smooth, even, and strictly convex. For simplicity of notation we also abbreviate 
    \[
        \varrho_\varepsilon \eqdef \varrho_{\varphi_\varepsilon} 
        \text{ and }
        Z_\varepsilon \eqdef \int_{\mathbb{R}} e^{-\varphi_\varepsilon}. 
    \]

    We decompose $\R$ into the central interval $C_{\varepsilon}$, two transition layers $I_{\varepsilon}^\pm$, and the
    exterior $E_{\varepsilon}$:
    \[
      C_{\varepsilon}\eqdef [-1+\varepsilon,1-\varepsilon],
      \quad
      I_{\varepsilon}^{\pm}\eqdef (\pm 1-\varepsilon,\pm 1+\varepsilon),
      \quad 
      T_{\varepsilon}\eqdef I_{\varepsilon}^+\cup I_{\varepsilon}^-,
      \quad
      E_{\varepsilon}\eqdef\R\setminus(C_{\varepsilon}\cup T_{\varepsilon}).
    \]
    With this notation, as $\kappa$ is supported inside $(-1,1)$ we see that $\varphi_\varepsilon''$ is flat inside $C_{\varepsilon}$. On the transition layer, we can control the size of $\varphi_\varepsilon$. 
    
    On the other hand, to construct the test function $f_\varepsilon$ we want to concentrate its derivative on the transition layers. Hence we define the following, even, Lipchitz function, bounded between $0$ and $1$ as 
    \begin{equation}\label{eq:test_function}
    f_\varepsilon(x)\eqdef
    \begin{cases}
        0,& x\in C_{\varepsilon},\\
        \displaystyle
        \frac{
            1
        }
        {
            \int_{I_{\varepsilon}^+}\varphi''_{\varepsilon}
            e^{\varphi_{\varepsilon}}
        }
        \int_{1-\varepsilon}^{|x|}
        \varphi_\varepsilon''(t)e^{\varphi_{\varepsilon}(t)}\dd t ,
        & |x|\in(1-\varepsilon,1+\varepsilon),\\
        1,& x\in E_{\varepsilon}.
    \end{cases}
    \end{equation}
    
    To prove item (1), we gather finer estimates on $\varphi_\varepsilon$ defined through~\eqref{eq.phi}. For $\varepsilon$ small enough, the following properties can be easily checked 
    \begin{enumerate}
    \item for $x \in C_{\varepsilon}$ one has
      \[
        \varphi''_{\varepsilon}\equiv\varepsilon,
        \qquad
        \varphi'_{\varepsilon}(x)=\varepsilon x,
        \qquad
        \varphi_{\varepsilon}(x)=\frac{\varepsilon x^2}{2}\in\left[0,\frac{\varepsilon}{2}\right];
      \]
    \item on $T_{\varepsilon}$ one has $\varphi_{\varepsilon}(x)=O(\varepsilon^{2})$ uniformly, hence
      $e^{\varphi_{\varepsilon}(x)}=1+O(\varepsilon^{2})$ uniformly on $T_{\varepsilon}$;
    \item for $x>1+\varepsilon$,
      \[
        \varphi'_{\varepsilon}(x)=\varepsilon^{-1}+\varepsilon x,
      \]
      and therefore
      \[
        \varphi_{\varepsilon}(x)
        =\varphi_{\varepsilon}(1+\varepsilon)
         +\varepsilon^{-1} (x-1-\varepsilon)
         +\frac{\varepsilon}{2}\bigl(x^2-(1+\varepsilon)^2\bigr);
      \]
    \item by symmetry the corresponding formulas on the left tail follow from the
      right-tail ones by the substitution $x\mapsto -x$.
    \end{enumerate}

    To estimate $\delta_{\BL}(f_\varepsilon, \varphi_\varepsilon)$ the term $\mathcal{E}_{\varrho_\varepsilon}(f_\varepsilon)$. Since $f_\varepsilon'$ vanishes outside the transition layers, we get that 
    \begin{equation}\label{eq.estimates_energy}
        \mathcal{E}_{\varrho_\varepsilon}(f_\varepsilon) 
        = 
        \frac{2}{Z_\varepsilon}\int_{I_{\varepsilon}^+}\frac{(f_\varepsilon')^2}{\varphi_\varepsilon''}
        e^{-\varphi_{\varepsilon}}\dd x
        =
        \frac{2}{Z_\varepsilon A_\varepsilon}, 
        \text{ where }
        A_\varepsilon = 
        \int_{I_{\varepsilon}^+}\varphi_\varepsilon''(x)
        e^{\varphi_{\varepsilon}(x)}\dd x. 
    \end{equation}
    So we need to compute the asymptotics of $Z_\varepsilon$ and of $A_\varepsilon$. Starting with $Z_\varepsilon$, from symmetry we can rewrite it as 
    \[
        \frac{1}{2}Z_\varepsilon 
        = 
        \underbrace{
            \int_{C_{\varepsilon} \cap \mathbb{R}_+} e^{-\varphi_\varepsilon} \dd x
        }_{\eqdef S_1}
        + 
        \underbrace{
            \int_{E_{\varepsilon} \cap \mathbb{R}_+} e^{-\varphi_\varepsilon} \dd x 
        }_{\eqdef S_2}
        + 
        \underbrace{\int_{I_{\varepsilon}^+} e^{-\varphi_\varepsilon} \dd x}_{\eqdef S_3}. 
    \]

    For the first sum $S_1$, since $\varphi_\varepsilon(x) \in [0, \varepsilon/2]$ for $x \in C_{\varepsilon}$, we get that $S_1 = 1 - \varepsilon + O(\varepsilon^2)$. For the second, for $x>1+\varepsilon$ we have
    \[
        \varphi_{\varepsilon}(x)
        =\varphi_{\varepsilon}(1+\varepsilon)+\ell\,(x-1-\varepsilon)
        +\frac{\varepsilon}{2}(x-1-\varepsilon)^2,
    \]
    where $\ell= \varepsilon^{-1}+\varepsilon(1+\varepsilon)=\varepsilon^{-1}+O(\varepsilon^{2})$. Thus
    \[
    \int_{1+\varepsilon}^{\infty}e^{-\varphi_{\varepsilon}(x)}\,dx
    =
    e^{-\varphi_{\varepsilon}(1+\varepsilon)}
    \int_0^\infty e^{-\ell u-\varepsilon u^2/2}\dd u.
    \]
    Since $\varphi_{\varepsilon}(1+\varepsilon)=O(\varepsilon^{2})$, the prefactor equals
    $1+O(\varepsilon^{2})$. Also
    \[
    0\le \frac{1}{\ell}
    -\int_0^\infty e^{-\ell u-\varepsilon u^2/2}\dd u
    \le
    \frac{\varepsilon}{2}\int_0^\infty u^2e^{-\ell u}\dd u
    =
    \frac{\varepsilon}{\ell^3}
    =O(\varepsilon^{4}),
    \]
    so the tail integral is $ S_2 = \varepsilon^{-1}+O(\varepsilon^{4})$. 

    Finally, on the transition layer $I_{\varepsilon}^+$, we have that
    $e^{-\varphi_{\varepsilon}(x)}=1+O(\varepsilon^{2})$ uniformly and hence
    \[
        S_3 = 
        \int_{1-\varepsilon}^{1+\varepsilon}e^{-\varphi_{\varepsilon}(x)}\,dx
        =2\varepsilon+O(\varepsilon^{3}).
    \]
    Summing all these contributions we obtain 
    \[
        Z_\varepsilon 
        = 
        2 + 2\varepsilon + O(\varepsilon^{3}). 
    \]
    To compute $A_\varepsilon$, using once again that $e^{\varphi_\varepsilon} = 1 + O(\varepsilon^{2})$ in $I_{\varepsilon}^+$, recalling that $\kappa$ is supported in $(-1,1)$ and has unitary total mass, we get that 
    \[
        A_\varepsilon = 
        (1 + O(\varepsilon^{2})) \int_{I_{\varepsilon}^+}
        \left(
            \varepsilon + \varepsilon^{-1} \, \kappa\left(\frac{x-1}{\varepsilon}\right)
        \right)\frac{\dd x}{\varepsilon}
        =
        (1 + O(\varepsilon^{2})) \left[
            2\varepsilon^2 + \varepsilon^{-1}
        \right]
        = 
        \varepsilon^{-1}\left(1 + O(\varepsilon^{3})\right). 
    \]
    These computations combined with~\eqref{eq.estimates_energy} imply that 
    \[
        \mathcal{E}_{\varrho_\varepsilon}(f_\varepsilon) = 
        \varepsilon - \varepsilon^2 + O(\varepsilon^{3}). 
    \]

    Now we estimate the variance, notice that as $f_\varepsilon = 0$ over $C_{\varepsilon}$, $f_\varepsilon = 1$ over $E_{\varepsilon}$ and $0 \le f_\varepsilon \le 0$ in the transition layers $T_{\varepsilon}$, we get that 
    \[
        \int_{\mathbb{R}} f_\varepsilon \dd \varrho_\varepsilon 
        = 
        \varrho_\varepsilon(E_{\varepsilon}) + O(\varrho_\varepsilon(T_{\varepsilon}))
        \text{ and }
        \int_{\mathbb{R}} f_\varepsilon^2 \dd \varrho_\varepsilon 
        = 
        \varrho_\varepsilon(E_{\varepsilon}) + O(\varrho_\varepsilon(T_{\varepsilon}))
    \]
    Once again, using the previous estimates of $\varphi_\varepsilon$ over each layer, we get that $\varrho_\varepsilon(E_{\varepsilon}) = \varepsilon - \varepsilon^{2} + O(\varepsilon^{3})$ and $\varrho_\varepsilon(T_{\varepsilon}) = O(\varepsilon^{3})$. As a result, we get that 
    \[
        \Var_{\varrho_\varepsilon}(f_\varepsilon) = 
        \int_{\mathbb{R}} f_\varepsilon^2 \dd \varrho_\varepsilon - 
        {\left(
            \int_{\mathbb{R}} f_\varepsilon \dd \varrho_\varepsilon
        \right)}^2 
        = 
        \varrho_\varepsilon(E_{\varepsilon}) - \varrho_\varepsilon(E_{\varepsilon})^2 + O(\varepsilon^{3})
        = 
        \varepsilon - 2\varepsilon^2 + O(\varepsilon^{3}). 
    \]
    Combining these two contributions, we get the asymptotics for the deficit of the Brascamp-Lieb inequality
    \[
        \delta_{\BL}(f_\varepsilon, \varrho_\varepsilon) = \varepsilon^2 + O(\varepsilon^{3}). 
    \]

    Finally, we compute the asymptotics for the distance and prove item (2). Notice that as $f_\varepsilon$ is even and $\varphi_\varepsilon'$ is odd, for $p = 2$ it is very clear from the Hilbertian structure of $L^2(\varrho_\varepsilon)$ that these two functions are orthogonal and hence the distance of $f_\varepsilon$ to $\mathcal{O}_{\BL}(\varphi_\varepsilon)$ does not see the direction $\varphi_\varepsilon'$. This is less clear in the $p \neq 2$ case, so we first remove the odd optimizer direction. For every $a,b\in\R$ the convexity of $u\mapsto |u|^p$ gives
    \[
    \frac{|f_{\varepsilon}(x)-b-a\varphi'_{\varepsilon}(x)|^p
            +|f_{\varepsilon}(x)-b+a\varphi'_{\varepsilon}(x)|^p}{2}
    \ge |f_{\varepsilon}(x)-b|^p.
    \]
    But since $f_\varepsilon - b$ is even and $\varphi'_\varepsilon$ is odd, we get that $|f_\varepsilon(x) - b - a\varphi_\varepsilon'(x)| = |f_\varepsilon(-x) - b + a\varphi_\varepsilon'(-x)|$, after integration we obtain
    \begin{equation}\label{eq:lp-drop-odd}
    \|f_{\varepsilon}-a\varphi'_{\varepsilon}-b\|_{L^p(\varrho_{\varepsilon})}^p
    \ge
    \|f_{\varepsilon}-b\|_{L^p(\varrho_{\varepsilon})}^p.
    \end{equation}
    Thus the lower bound for the full optimizer space reduces to a lower bound
    against constants.

    At least in the $p = 2$ case, the intuition that the optimal $b$ should be $b = \mathbb{E}_{\varrho_\varepsilon}f_\varepsilon$. Hence, we set 
    \[
        \bar f_\varepsilon \eqdef f_{\varepsilon}-\mathbb{E}_{\varrho_\varepsilon}f_\varepsilon. 
    \]
    Next, we fix $b\in\R$. If $|b+\mathbb{E}_{\varrho_\varepsilon}f_\varepsilon|>1/2$, then on $C_\varepsilon$ we have $|\bar f_{\varepsilon}-b|=|-\mathbb{E}_{\varrho_\varepsilon}f_\varepsilon-b|>\frac12$, so that 
    \[
        \|
            \bar f_{\varepsilon}
            -b
        \|_{L^p(\varrho_{\varepsilon})}^p
        \ge 2^{-p}\varrho_{\varepsilon}(C_\varepsilon).
    \]
    If $|b+\mathbb{E}_{\varrho_\varepsilon}f_\varepsilon|\le 1/2$, then on $E_\varepsilon$, $|\bar f_{\varepsilon}-b|=|1-\mathbb{E}_{\varrho_\varepsilon}f_\varepsilon-b|\ge \frac12$,
    and hence
    \[
    \|\bar f_{\varepsilon}-b\|_{L^p(\varrho_{\varepsilon})}^p\ge 2^{-p}\varrho_\varepsilon(E_\varepsilon).
    \]
    
    But taking $\varepsilon$ small enough, we have $\varrho_{\varepsilon}(C_\varepsilon) \ge 1/2$, and since we always have $\varrho_{\varepsilon}(E_\varepsilon) \le 1$, together with~\eqref{eq:lp-drop-odd} this proves
    \[
        \dist_{L^p(\varrho_{\varepsilon})}(f_{\varepsilon},\mathcal{O}_{\BL}(\varphi_\varepsilon))^p\gtrsim \varrho_\varepsilon(E_\varepsilon)
        = \varepsilon + O(\varepsilon^2), 
    \]
    and item (2) follows for $d=1$. 
\end{proof}

We now lift this construction to $\mathbb{R}^d$ via a tensorization argument. 
\begin{proof}[\underline{\textbf{Proof for $d\geq 2$:}}]
    Write points of $\R^d$ as $(x,y)\in \R\times\R^{d-1}$ and define
    \[
    \varphi_{\varepsilon,d}(x,y)\eqdef \varphi_{\varepsilon}(x)+\frac{|y|^2}{2},
    \qquad
    f_{\varepsilon,d}(x,y)\eqdef f_{\varepsilon}(x).
    \]
    Let $\gamma_{d-1}$ denote the standard Gaussian probability measure on
    $\R^{d-1}$. Then
    \[
    \varrho_{\varepsilon,d}=\varrho_{\varepsilon}\otimes\gamma_{d-1}
    \text{ and }
    D^2\varphi_{\varepsilon,d}(x,y)=\diag(\varphi_{\varepsilon}''(x),I_{d-1}).
    \]
    Hence
    \[
    \nabla f_{\varepsilon,d}(x,y)=(f_{\varepsilon}'(x),0),
    \qquad
    \langle (D^2\varphi_{\varepsilon,d})^{-1}\nabla f_{\varepsilon,d},\nabla f_{\varepsilon,d}\rangle
    =\frac{(f_{\varepsilon}'(x))^2}{\varphi_{\varepsilon}''(x)}.
    \]
    It then follows that
    \begin{equation}\label{eq:prod-energy}
    \mathcal E_{\varphi_{\varepsilon,d}}(f_{\varepsilon,d})
    =
    \mathcal E_{\varphi_{\varepsilon}}(f_{\varepsilon}),
    \ \ 
    \Var_{\varrho_{\varepsilon,d}}(f_{\varepsilon,d})
    =
    \Var_{\varrho_{\varepsilon}}(f_{\varepsilon}),
    \text{  and  }
    \delta_{\BL}(f_{\varepsilon,d}, \varphi_{\varepsilon,d})
    =
    \delta_{\BL}(f_{\varepsilon},\varphi_{\varepsilon}),
    \end{equation}
    which give the expected behavior from the previous estimates. 

    To compute the $L^p$ distance, first notice that the space of optimal functions becomes
    \[
        \mathcal{O}_{\BL}(\varphi_{\varepsilon,d})
        =
        \text{span}\{1,\varphi'_{\varepsilon}(x),y_1,\dots,y_{d-1}\}.
    \]
    We can use the same symmetry argument from the $d=1$ case. Notice that the probability measure $\varrho_{\varphi_{\varepsilon,d}}=\varrho_{\varepsilon}\otimes\gamma_{d-1}$ has the same symmetries, while $\varphi'_{\varepsilon}(x)$ is odd in $x$ and each $y_j$ is odd in its own Gaussian variable. Therefore, using the same argument from the $d=1$ case we obtain for every $a\in\R$, $\alpha\in\R^{d-1}$, and $b\in\R$ that
    \[
        \|f_{\varepsilon,d}-a\varphi'_{\varepsilon}-\alpha\cdot y-b\|_{L^p(\varrho_{\varphi_{\varepsilon,d}})}^p
        \ge
        \|f_{\varepsilon,d}-b\|_{L^p(\varrho_{\varphi_{\varepsilon,d}})}^p 
        = 
        \|f_{\varepsilon}-b\|_{L^p(\varrho_{\varepsilon})}^p,
    \]
    where the last equality is due to the fact that $f_{\varepsilon,d}$ is independent of $y$. The estimates for $d=1$ imply that 
    \[
        \dist_{L^p(\varrho_{\varphi_{\varepsilon,d}})}
            (f_{\varepsilon,d},\mathcal{O}_{\BL,\varphi_{\varepsilon,d}})^p
        \gtrsim 
        \varrho_{\varepsilon,d}(E_\varepsilon\times\mathbb{R}^{d-1})
        = 
        \varrho_{\varepsilon}(E_\varepsilon)
        \gamma_{d-1}(\mathbb{R}^{d-1}) 
        = \varepsilon + O(\varepsilon^2), 
    \]
    and the result follows. 
\end{proof}

Using this construction, we now move on to the 
\begin{proof}[\underline{\textbf{Proof of Theorem~\ref{thm.constant_depends_pot_Lp}:}}]
    Using the family ${(\varphi_{\varepsilon,d}, f_{\varepsilon,d})}_{\varepsilon > 0}$ constructed in Lemma~\ref{lemma.counter_example}, we get that
    \[
        0
        \le 
        \frac{
            {\delta_{\BL}(f_{\varepsilon,d}, \varrho_{\varepsilon,d})}^{1/2}
        }{
            \dist_{L^p(\varrho_{\varphi,d})}
            (f_{\varepsilon,d};\mathcal{O}_{\BL}(\varphi_{\varepsilon,d}))
        }
        \le O({\varepsilon}^{1 - 1/p}). 
    \]
    Letting $\varepsilon \to 0$, the lower bound $0$ is attained so that it must be the infimum, giving the result. 
\end{proof}

\begin{remark}
At $p=1$ the same computation is exactly borderline. Both the deficit and the distances scale as
\[
    \delta_{\BL}(f_{\varepsilon}, \varphi_{\varepsilon})^{1/2},
  \dist_{L^1(\varrho_{\varepsilon})}(f_{\varepsilon},\mathcal{O}_{\BL}(\varphi_{\varepsilon}))
  \asymp \varepsilon,
\]
which of course is consistent with Theorem~\ref{thm.brascamp_lieb_quantitative}.
\end{remark}

\section{Quantitative Stability of Moment Measures}\label{sec.stability_moment_measures}

In this section we use the previous machinery to obtain a sharp quantitative stability result for moment measures, in Theorem \ref{lemma.stab_momentmeasures_backbone_intro}. As usual, we shall state it again below for the reader's convenience:

\begin{theorem}\label{lemma.stab_momentmeasures_backbone}
    There exists a universal constant $C_d$ depending only on the ambient dimension such that the following holds. Given a Radon measure $\mu \in \mathscr{P}_1(\mathbb{R}^d)$ whose barycenter lies that the origin and $\dim \supp \mu = d$, let $\bar \varphi$ be a maximizer of $\mathscr{J}_\mu$. Then for any convex function $\varphi : \mathbb{R}^d \to \mathbb{R}\cup \{+\infty\}$ it holds that
    \begin{equation}\label{eq.backbone_stability_Gibbs}
        \dist_{L^1(\mathbb{R}^d)}\left(
            \varrho_{\varphi^*},
            \mathcal{M}_\mu
        \right)
        \le C_d 
        {\left(
            \mathscr{J}_{\mu}(\bar \varphi)
            - 
            \mathscr{J}_{\mu}(\varphi)
        \right)}^{1/2}.
    \end{equation}
    
     In addition, assuming that $\mu$ has finite second moment, there exists $\lambda \in [1/4,1/2]$ such that, setting $v \eqdef \bar \varphi - \varphi$ and $\varphi_\lambda \eqdef \bar \varphi + \lambda v$, it holds that
    \begin{equation}\label{eq.backbone_stability_potentials}
       \dist_{L^1(\mu_{\lambda})}\left(
            \varphi,
            \mathcal{N}_\mu
        \right)
        \le C_d \left(
            \mathscr{J}_{\mu}(\bar \varphi)
            - 
            \mathscr{J}_{\mu}(\varphi)
        \right)^{1/2},
    \end{equation}
    where $\mu_{\lambda} = \mu_{\varphi_\lambda^*}$ is the moment measure associated with the interpolation $(\varphi_\lambda)^*$. 
\end{theorem}

\begin{proof}
    To prove the first estimate~\eqref{eq.backbone_stability_Gibbs}, we let $\bar \varphi$ be a maximizer of $\mathscr{J}_\mu$, while $\varphi$ is a general convex function. We then employ the quantitative version of the Prékopa-Leindler inequality with the functions 
    \[
      f = e^{-\bar \varphi^*}, \quad g = e^{-\varphi^*} \text{ and } 
      h = e^{-\varphi_{1/2}^*}
      \text{ with }
      \varphi_{1/2} = \frac{\bar \varphi + \varphi}{2}. 
    \]
    Therefore $f,g,h$ satisfy the Prékopa condition with convexity parameter $s=1/2$. In addition, due to concavity of $\mathscr{J}_\mu$ and the optimality of $\bar\varphi$ we have that
    \begin{align*}
      0 &\le \delta \eqdef \mathscr{J}_\mu(\varphi_{1/2}) 
      - 
      \frac{1}{2}\left(
          \mathscr{J}_\mu(\bar\varphi) + \mathscr{J}_\mu(\varphi)  
      \right)\\ 
      &\le 
      \mathscr{J}_\mu(\bar \varphi) 
      - 
      \frac{1}{2}\left(
          \mathscr{J}_\mu(\bar\varphi) + \mathscr{J}_\mu(\varphi)  
      \right)
      = 
      \frac{1}{2}\left(
          \mathscr{J}_\mu(\bar \varphi) - \mathscr{J}_\mu(\varphi)  
      \right) \eqdef \varepsilon.
    \end{align*}
    The quantity $\delta$ can be precisely related to the deficit of the Prékopa-Leindler inequality. Indeed: 
    \begin{align*}
      \delta =& 
      \log\left( \int_{\mathbb{R}^d} e^{-\varphi_{1/2}^*}\dd x \right)
      - 
      \log{\left( 
        \int_{\mathbb{R}^d} e^{-\bar\varphi^*}\dd x \cdot 
        \int_{\mathbb{R}^d} e^{-\varphi^*}\dd x 
        \right)}^{1/2}\\ 
        & - \int_{\mathbb{R}^d}
        (
          \underbrace{
            \varphi_{1/2} - \frac{1}{2}(\bar\varphi + \varphi)
          }_{=0}
        )
        \dd \mu\\ 
        =& \displaystyle
        \log \left(
          \frac{\displaystyle
             \int_{\mathbb{R}^d} e^{-\varphi_{1/2}^*}\dd x
          }{\displaystyle
            {\left( 
              \int_{\mathbb{R}^d} e^{-\bar\varphi^*}\dd x \cdot 
              \int_{\mathbb{R}^d} e^{-\varphi^*}\dd x 
            \right)}^{1/2}
          }  
        \right) \le \varepsilon.
    \end{align*}

Now, notice that since $\varrho_{\bar \varphi^*}$ and all elements in $\mathcal{M}_\mu$ are probability measures, their distance in $L^1(\mathbb{R}^d)$ is at most $2$. As a result, we can assume that $\mathscr{J}_\mu(\bar \varphi) - \mathscr{J}_\mu(\varphi) < 1$ otherwise the inequality is trivial. From the previous estimates we have that 
\[
  \int_{\mathbb{R}^d} e^{-\varphi_{1/2}^*}\dd x 
  < (1+\varepsilon)
  {\left( 
          \int_{\mathbb{R}^d} e^{-\bar\varphi^*}\dd x \cdot 
          \int_{\mathbb{R}^d} e^{-\varphi^*}\dd x 
  \right)}^{1/2}. 
\]
Then, the quantitative version of Prékopa-Leindler inequality in the form of Lemma~\ref{lemma.sharp_stabilityPL} implies that there exists a universal constant $C_d$ depending only on the dimension and $x_0 \in \mathbb{R}^d$ such that 
\[
  \int_{\mathbb{R}^d} 
  \left|
    \varrho_{\bar \varphi^*}(x)
    - 
    \varrho_{\varphi^*}(x+x_0)
  \right|\dd x 
  \le 
  C_{d}
  {\left(
    \mathscr{J}_\mu(\bar \varphi) - \mathscr{J}_\mu(\varphi)
  \right)}^{1/2}.
\]
Estimate~\eqref{eq.backbone_stability_Gibbs} follows.

For the second estimate we can assume that $\mathscr{J}_{\mu}(\bar \varphi)-\mathscr{J}_{\mu}(\varphi) < +\infty$ is finite, otherwise there is nothing to prove. In particular, this implies that 
\[
    \int_{\mathbb{R}^d} \varphi \dd \mu 
    = 
    \int_{\mathbb{R}^d} \varphi(\nabla \bar \varphi^*) \dd 
    \varrho_{\bar \varphi^*} < + \infty. 
\]
Since $\nabla \bar \varphi^*$ takes values in $\dom \bar \varphi$, this means that $\dom \bar \varphi \subset \dom \varphi$. Therefore, rewritting 
\[
    \dist_{L^1(\mu_{\lambda})}\left(
            \varphi,
            \mathcal{N}_\mu
        \right)
    = 
    \inf_{(a,b) \in \mathbb{R}^d\times \mathbb{R}}
    \norm{
        \varphi(\nabla \varphi_\lambda^*) - \bar \varphi(\nabla \varphi_\lambda^*) - (a\cdot \nabla \varphi_\lambda^* + b )
    }_{L^1(\varrho_{\varphi_\lambda^*})},
\]
we see that both sides of this inequality make sense. 

Now, let us assume momentarily that $\varphi$ is strongly convex and recall the interpolation $\varphi_t = \bar \varphi + tv$, so that $\varphi_0 = \bar \varphi$. Define the one-dimensional function 
\[
    t \mapsto J(t) \eqdef \mathscr{J}_\mu(\varphi_t).
\]
Hence a second order Taylor expansion and the characterization of the second variation of $\mathscr{J}_\mu$ from Lemma~\ref{lemma.first_second_variation}, which can be applied under the assumption of strong convexity of $\varphi$, we obtain that 
\begin{align*}
    \mathscr{J}_\mu(\varphi) - \mathscr{J}_\mu(\bar \varphi) 
    &=
    J(1) - J(0) 
    \le
    J'(0) + \int_0^1(1-t)J''(t) \dd t\\ 
    &\le 
    \underbrace{
        \inner{\nabla \mathscr{J}_\mu(\bar \varphi), v}
    }_{ = 0} 
    + 
    \int_0^1(1-t) \frac{\dd^2}{\dd t^2}\mathscr{J}_\mu(\varphi_t) \dd t\\ 
    &= 
    -
    \int_0^1(1-t) \delta_{\BL}\left(v(\nabla\varphi^*_t), \varrho_{\varphi_t^*}\right)\dd t,
\end{align*}
where we have used the fact that if $\bar\varphi \in \argmax \mathscr{J}_\mu$ then the moment measure $\mu_{\varphi^*}$ coincides with $\mu$ so that the first variation vanishes. In the sequel, using the quantitative stability version of the Brascamp-Lieb inequality provided by Theorem~\ref{thm.brascamp_lieb_quantitative}, there exists a dimensional constant $C_d$ such that 
\begin{align*}
    \int_0^1(1-t) 
    \inf_{a \in \mathbb{R}^d, b \in \mathbb{R}} 
    \norm{(\varphi - \bar \varphi) - (a\cdot x + b)}^2_{L^1(\mu_{\varphi^*_t})} \dd t
    \le 
    C_d^2 \left(
    \mathscr{J}_\mu(\bar \varphi) - \mathscr{J}_\mu(\varphi)
    \right).
\end{align*}
In particular, there must exist some $\lambda \in (1/4,1/2)$ such that the integrand above evaluated at the interpolation $\mu_{\varphi_\lambda^*}$ yields
\begin{align*}
    \inf_{a \in \mathbb{R}^d, b \in \mathbb{R}} 
    \norm{(\varphi - \bar \varphi) - (a\cdot x + b)}^2_{L^1(\mu_{\varphi^*_\lambda})}
    \le 
    2C_d \left(
    \mathscr{J}_\mu(\bar \varphi) - \mathscr{J}_\mu(\varphi)
    \right)^{1/2}.
\end{align*}

For a general convex function $\varphi$, we set $\varphi_\alpha \eqdef \varphi + \frac{\alpha}{2}|x|^2$, as in the proof of Theorem~\ref{thm.brascamp_lieb_quantitative}. For all $\alpha > 0$ the above inequality holds giving 
\[
    \inf_{(a,b) \in \mathbb{R}^d\times \mathbb{R}}
    \norm{
        \varphi(\nabla \varphi_{\lambda_\alpha}^*) - \bar \varphi(\nabla \varphi_{\lambda_\alpha}^*) - (a\cdot \nabla \varphi_{\lambda_\alpha}^* + b )
    }_{L^1(\varrho_{\varphi_{\lambda_\alpha}^*})} 
    \le 
    C_d\left(
        \mathscr{J}_\mu(\bar \varphi) - \mathscr{J}_\mu(\varphi_\alpha)
    \right),
\]
for some $\lambda_\alpha \in [1/4,1/2]$. Now notice that adding the $\alpha$ strong convexity to $\varphi$ acts as an infimum convolution of $\varphi_t^*$ since 
\[
    \varphi_{t,\alpha}^* 
    \eqdef 
    {\left(
        \varphi_t + \frac{t\alpha}{2}|\cdot|^2
    \right)}^* 
    = 
    \varphi_t^* \square \frac{1}{2 t\alpha}|\cdot|^2.  
\]
As a result of well-known proporties of infimal convolution and Moreau-Yosida regularization, it follows that
\[
\varphi_{t,\alpha}^* \cvstrong{\alpha \to 0}{} \varphi_t^*, 
\quad 
\nabla \varphi_{t,\alpha}^* 
\cvstrong{\alpha \to 0}{}
\nabla \varphi_t^*,
\]
pointwise. 

Hence on the righ-hand side, we clearly have 
\[
    \log\left(
        \int_{\mathbb{R}^d} e^{-\varphi_{\alpha}^*}\dd x
    \right)
    \cvstrong{\alpha \to 0}{} 
    \log\left(
        \int_{\mathbb{R}^d} e^{-\varphi^*}\dd x
    \right),
\]
and for the integral w.r.t.~$\mu$ we get 
\[
    \int_{\mathbb{R}^d} \varphi_\alpha \dd \mu 
    = 
    \int_{\mathbb{R}^d} \varphi \dd \mu
    + 
    \frac{\alpha}{2}\int_{\mathbb{R}^d}|x|^2 \dd \mu
    \cvstrong{\alpha \to 0}{}
    \int_{\mathbb{R}^d} \varphi \dd \mu,
\]
since $\mu$ has finite second moment. So the right-hand side converges to $\mathscr{J}_\mu(\bar \varphi) - \mathscr{J}_\mu(\varphi)$. 

For the right-hand side, we can apply Lemma~\ref{lem.coercivity-nice-case} to find $(a_{\alpha}, b_{\alpha})$ attaining the infimum. In addition, as in the proof of Theorem~\ref{thm.brascamp_lieb_quantitative} the family ${(a_{\alpha}, b_{\alpha})}_{\alpha}$ is contained in a compact set of $\mathbb{R}^d\times \mathbb{R}$, so up to subsequences we can assume them to converge to some $(\bar a, \bar b)$. Hence using Fatou's Lemma, we get that 
\begin{align*}
    \liminf_{\alpha\to 0} &\norm{
        \varphi(\nabla \varphi_{\lambda_\alpha}^*) - \bar \varphi(\nabla \varphi_{\lambda_\alpha}^*) - (a_\alpha\cdot\nabla \varphi_{\lambda_\alpha}^* + b_\alpha )
    }_{L^1(\varrho_{\varphi_{\lambda_\alpha}^*})} \\
    &\ge 
    \norm{
        \varphi(\nabla \varphi_{\bar \lambda}^*) - \bar \varphi(\nabla \varphi_{\bar \lambda}^*) - (\bar a\cdot \nabla \varphi_{\bar \lambda}^* + \bar b )
    }_{L^1(\varrho_{\varphi_{\bar \lambda}^*})} 
    \ge 
    \dist_{L^1(\mu_{\lambda})}\left(
            \varphi,
            \mathcal{N}_\mu
        \right). 
\end{align*}
This finishes our proof.
\end{proof}

To finish this section, we discuss an example showing that the exponent $1/2$ above is sharp.

\begin{example}\label{ex.sharp_exponentPL}
    For simplicity, we construct an example in 1D, the higher dimensional construction is straight-forward. We consider
    \[
        \mu \propto e^{-\frac{|x|^2}{2}}
    \]
    to be the standard gaussian. This way, the optimal potential is given by 
    \[
        \varphi_\mu = \varphi_\mu^* = \frac{|x|^2}{2}.
    \]
    Define also the perturbation 
    \[
        \varphi_{\varepsilon} \eqdef 
        \frac{1+\varepsilon}{2}|x|^2 
        \text{ so that }
        \varphi_{\varepsilon}^* =
        \frac{1}{(1+\varepsilon)}
        \frac{|x|^2 }{2}. 
    \]
    Hence we have that 
    \[
        Z_{\varphi_\mu} 
        = 
        \sqrt{2\pi} 
        \text{ and }
        Z_{\varphi_\varepsilon} 
        = 
        \sqrt{2\pi(1+\varepsilon)}. 
    \]
    Therefore
    \[
        \mathscr{J}_\mu(\varphi_\varepsilon) = 
        \frac{1}{2}\log(2\pi(1+\varepsilon)) 
        -
        \frac{1+\varepsilon}{2}
    \]
    and 
    \[  
        \sup \mathscr{J}_\mu 
        -
        \mathscr{J}_\mu(\varphi_\varepsilon) = 
        \frac{\varepsilon^2}{4} + O(\varepsilon^3). 
    \]

    On the other hand, we now give a lower bound to the distance of between Gibbs measures up to translations after optimizing over all translations. Let $\rho_0$ and $\varrho_\varepsilon$ denote the centered Gaussian densities with variances $1$ and $(1+\varepsilon)$, respectively. For every $a\in\mathbb{R}$ and every bounded function $\zeta$ with $|\zeta|\infty\leq 1$,
    \[
    \norm{\varrho_0-\varrho_\varepsilon(\cdot-a)}_{L^1(\mathbb{R})}
    \geq
    \left|
    \int_{\mathbb{R}}
    \zeta(x)\bigl(\varrho_0(x)-\varrho_\varepsilon(x-a)\bigr)\dd x
    \right|.
    \]
    Equivalently, using the complex-valued test function $e^{ix}$ and then the reverse triangle inequality for the Fourier transforms,
    \[
    \begin{aligned}
    \norm{\varrho_0-\varrho_\varepsilon(\cdot-a)}_{L^1(\mathbb{R})}
    &\geq
    \left|
    e^{-1/2}-e^{ia}e^{-(1+\varepsilon)/2}
    \right| \
    &\geq
    e^{-1/2}-e^{-(1+\varepsilon)/2} \
    &=
    e^{-1/2}\left(1-e^{-\varepsilon/2}\right).
    \end{aligned}
    \]
    Combining this with the asymptotics in $\varepsilon$ for the optimality gap in the energy, it shows that~\eqref{eq.backbone_stability_Gibbs} is sharp with the exponent $1/2$. 

    The same example also shows the sharpness of~\eqref{eq.backbone_stability_potentials}. Indeed we have that 
    \begin{align*}
        \inf_{a,b} 
        \norm{\varphi_\mu - \varphi_\varepsilon 
        - 
        (a\cdot \bar \varphi_\varepsilon' + b)}_{L^1(\bar \mu_\varepsilon)}
        &= 
        \inf_{a,b} 
        \norm{\varepsilon \frac{|\cdot|^2}{2}
        - 
        (a(1 + t\varepsilon)x + b)}_{L^1(\bar \mu_\varepsilon)} \\ 
        &= 
        \varepsilon
        \inf_{a',b'} 
        \norm{\frac{|\cdot|^2}{2}
        - 
        (a'(1 + t\varepsilon)x + b')}_{L^1(\bar \mu_\varepsilon)}
        \propto \varepsilon,
    \end{align*}
    with the change of variables $(a',b') = (a/\varepsilon, b/\varepsilon)$, and the interpolation $\bar \mu_\varepsilon$ is still a gaussian. This finishes the proof of sharpness of the exponent $1/2$. 
    
\end{example}

\section{Applications} 

Finally, we move on to several different instances where the previous results in this manuscript can be employed in order to deduce interesting stability properties of moment measures. 

\subsection{Stability in a compact domain}\label{subsec.stability_compact} The simplest case of stability is when both $\varrho$ and $\mu$ are restricted to be supported on a convex and compact domain $\Omega$, that is $\varrho, \mu \in \mathscr{P}(\Omega)$. The attentive reader might have hesitated if this is even possible without contradicting the uniqueness result of Cordero-Eurasquin and Klartag for moment measure representations. Here the question is to find $\psi \in \mathscr{C}(\Omega)$ such that 
\begin{equation}\label{eq.moms_compact}
    \mu = {(\nabla\psi)}_\sharp e^{-\psi}\mathscr{L}^d\mres \Omega. 
\end{equation}
The existence question follows with simplified versions of the arguments from~\cite{cordero2015moment,santambrogio2016dealing} by considering the functionals 
\[
    \mathscr{E}_{\mu,\Omega}(\varrho) \eqdef \int_\Omega \log\varrho \dd \varrho + \mathcal{T}(\varrho, \mu),\quad 
    \mathscr{J}_{\mu,\Omega}(\varphi) \eqdef \log \int_\Omega e^{-\varphi^*}\dd x - \int_\Omega \varphi\dd\mu.
\]
Uniqueness then follows, now without modulo translations, due to the strict geodesic convexity of the entropy since $\Omega$ is convex. 

One could embed $\mu$ in $\mathscr{P}(\Omega')$ for any $\Omega'$ containing its support and obtain another moment measure representation. What is happening here? For any representation in a bounded domain the potential $\psi_\Omega$ is necessarily Lipschitz continuous, for instance due to $\nabla \psi_\Omega$ being the Brenier map with the compactly supported target $\mu$, and hence bounded in $\Omega$. Therefore, if one wishes to use $\psi_\Omega$ for a moment measure representation in $\mathbb{R}^d$ we must extend it with $+\infty$ outside of $\Omega$ so that $e^{-\varphi_\Omega^*}$ remains a probability measure. But this means that this extention is not continuous in all of $\partial \Omega$, and hence it is not essentially continuous. Therefore have no contradiction with the results of~\cite{cordero2015moment}, since their result states uniqueness up to translations of a moment measure representation with an essentially continuous potential.

From one hand, this case is much less interesting from a strictly mathematical point of view, since the compactness of the domain makes many arguments easier. However, it is particularly relevant for numerical applications where one often restricts the problem to a bounded domain. Furthermore, in this case we obtain stronger stability results as a direct corollary of Theorem~\ref{lemma.stab_momentmeasures_backbone}.

\begin{theorem}\label{thm.stability_compact_case}
    Let $\Omega$ be a convex and compact subset of $\mathbb{R}^d$. Given two Radon measures $\mu, \nu \in \mathscr{P}(\Omega)$ whose barycenters lie at the origin and $\dim \supp \mu = \dim \supp \nu = d$, let $\varphi_\mu, \varphi_\nu$ be maximizers of $\mathscr{J}_{\mu,\Omega}, \mathscr{J}_{\nu,\Omega}$ respectively. Then, there exists a universal constant $C_{d,\Omega}$ depending only on the dimension and on the diameter of $\Omega$ such that
    \begin{equation}\label{eq.stability_Gibbs_compact}
        \inf_{x_0 \in \mathbb{R}^d} 
        \norm{
            \varrho_{\varphi_\mu^*}(\cdot) - \varrho_{\varphi_\nu^*}(\cdot + x_0) 
        }_{L^1(\mathbb{R}^d)}
        \le C_{d,\Omega} {W_1(\mu,\nu)}^{1/2}
    \end{equation}
    and
    \begin{equation}\label{eq.stability_potentials_compact}
        \inf_{a \in \mathbb{R}^d, \ b \in \mathbb{R}} 
        \norm{
            (\varphi_\mu - \varphi_\nu) - (a\cdot x + b) 
        }_{L^1(\mu_{\lambda})}
        \le C_{d,\Omega} {W_1(\mu,\nu)}^{1/2},
    \end{equation}
    where $\mu_\lambda = \mu_{\psi_\lambda}$ is the moment measure of $\psi_\lambda = ((1-\lambda)\varphi_\mu + \lambda\varphi_\nu)^*$ for some $\lambda \in (0,1)$.
\end{theorem}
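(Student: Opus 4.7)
The plan is to reduce the theorem to an application of the backbone stability result Theorem~\ref{lemma.stab_momentmeasures_backbone} by controlling the functional deficit $\mathscr{J}_{\mu,\Omega}(\varphi_\mu) - \mathscr{J}_{\mu,\Omega}(\varphi_\nu)$ in terms of $W_1(\mu,\nu)$. This will be achieved via a standard symmetrization trick together with the Kantorovich--Rubinstein duality, once one establishes that the two maximizers are uniformly Lipschitz on $\mathbb{R}^d$.

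First, using optimality of $\varphi_\mu$ for $\mathscr{J}_{\mu,\Omega}$ and of $\varphi_\nu$ for $\mathscr{J}_{\nu,\Omega}$, both deficits
\[
\Delta_\mu \eqdef \mathscr{J}_{\mu,\Omega}(\varphi_\mu) - \mathscr{J}_{\mu,\Omega}(\varphi_\nu), \qquad
\Delta_\nu \eqdef \mathscr{J}_{\nu,\Omega}(\varphi_\nu) - \mathscr{J}_{\nu,\Omega}(\varphi_\mu)
\]
are non-negative. When summed, the log-entropy terms $\log\int_\Omega e^{-\varphi_\mu^*}$ and $\log\int_\Omega e^{-\varphi_\nu^*}$ cancel, leaving only linear contributions:
\[
0 \le \Delta_\mu \le \Delta_\mu + \Delta_\nu = \int_\Omega (\varphi_\nu - \varphi_\mu)\,\dd(\mu-\nu).
\]

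Next, I would observe that both $\varphi_\mu$ and $\varphi_\nu$ are Lipschitz on $\mathbb{R}^d$ with constant $R_\Omega \eqdef \sup_{x \in \Omega}|x|$. Indeed, since $\mu$ is supported on $\Omega$, the transport relation $\mu = (\nabla \varphi_\mu^*)_\sharp \varrho_{\varphi_\mu^*}$ forces the Brenier map $\nabla\varphi_\mu^*$ to take values in $\Omega$, so $\varphi_\mu^*$ is $R_\Omega$-Lipschitz on $\Omega$. By Legendre duality $\nabla \varphi_\mu(y) \in \Omega$ as well, yielding the same bound for $\varphi_\mu$, and identically for $\varphi_\nu$. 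Consequently $\varphi_\nu - \varphi_\mu$ is $2R_\Omega$-Lipschitz and the Kantorovich--Rubinstein representation of $W_1$ gives
\[
\int_\Omega (\varphi_\nu - \varphi_\mu)\,\dd(\mu-\nu) \le 2R_\Omega\, W_1(\mu,\nu),
\]
so that $\Delta_\mu \le 2R_\Omega\, W_1(\mu,\nu)$. Applying the (compact-domain counterpart of) Theorem~\ref{lemma.stab_momentmeasures_backbone} with $\bar\varphi = \varphi_\mu$ and $\varphi = \varphi_\nu$, and using that $\varrho_{\varphi_\mu^*} \in \mathcal{M}_\mu$ and $\varphi_\mu \in \mathcal{N}_\mu$, immediately produces both~\eqref{eq.stability_Gibbs_compact} and~\eqref{eq.stability_potentials_compact} with $C_{d,\Omega} = C_d\sqrt{2R_\Omega}$.

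The most delicate point is the legitimacy of invoking Theorem~\ref{lemma.stab_momentmeasures_backbone} in the present compact setting: the primal potentials $\psi_\mu = \varphi_\mu^*$ equal $+\infty$ outside $\Omega$ and therefore fail to be essentially continuous, violating a key hypothesis of the Brascamp--Lieb stability estimate (Theorem~\ref{thm.brascamp_lieb_quantitative}) that underlies the second assertion of the backbone. To handle this, I would regularize $\psi_\mu$ by adding a small quadratic term $\tfrac{\alpha}{2}|\cdot|^2$ restricted to $\Omega$, or equivalently consider strictly convex approximants on $\Omega$, and then argue, exactly as in the concluding approximation step at the end of the proof of Theorem~\ref{thm.brascamp_lieb_quantitative}, that the deficit is upper semicontinuous and the distance to the optimal manifold is lower semicontinuous as $\alpha \to 0^+$. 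For the first assertion~\eqref{eq.stability_Gibbs_compact}, no such care is needed, since only the quantitative Prékopa--Leindler estimate of Lemma~\ref{lemma.sharp_stabilityPL} is required, and that statement needs only measurability.
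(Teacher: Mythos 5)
Your proof is correct, and the reduction to the backbone stability estimate of Theorem~\ref{lemma.stab_momentmeasures_backbone} is the same as the paper's. The one genuine difference is how you control the deficit $\mathscr{J}_{\mu,\Omega}(\varphi_\mu)-\mathscr{J}_{\mu,\Omega}(\varphi_\nu)$: you use the symmetrization identity $\Delta_\mu + \Delta_\nu = \int(\varphi_\nu-\varphi_\mu)\,\dd(\mu-\nu)$, which kills the logarithmic terms by pure algebra and requires nothing about the structure of $\mathscr{J}_\mu$ beyond its defining formula. The paper instead invokes concavity of $\mathscr{J}_\mu$ together with the first-variation formula from Lemma~\ref{lemma.first_second_variation}, identifying $\nabla\mathscr{J}_\mu(\varphi_\nu)=\nu-\mu$ via $\mu_{\varphi_\nu^*}=\nu$. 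Your route is slightly more elementary and robust (it would not require differentiability of $\mathscr{J}_\mu$), though it yields the same $\Lip(\varphi_\mu-\varphi_\nu)\,W_1(\mu,\nu)$ bound.

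Your caveat about essential continuity is a worthwhile observation. The paper's own discussion preceding the theorem explains that the potentials $\psi_\Omega = \varphi_\mu^*$ fail to be essentially continuous in the compact setting, yet the proof then combines directly with Theorem~\ref{lemma.stab_momentmeasures_backbone} without revisiting this hypothesis. Your proposed regularization fix is a reasonable patch. Note, however, that an inspection of the proof of Theorem~\ref{thm.brascamp_lieb_quantitative} shows that what the argument actually uses is not essential continuity per se, but rather the full-dimensionality of the moment measure (via Lemma~\ref{lem.moms_essentially_continuous}), which is what guarantees the recentering vector $x_\delta/\delta$ stays bounded. In the compact setting this is supplied directly by the hypothesis $\dim\supp\mu = d$ (and the analogous property along the interpolating moment measures $\mu_\lambda$), so the hypothesis could be weakened and the gap removed without the regularization; still, your flagging the subtlety is the correct instinct, since the paper does not make this explicit.
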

\begin{proof}
    Consider $\mu,\nu \in \mathscr{P}(\Omega)$, and consider two convex functions $\varphi_\mu, \varphi_\nu$ such that
    \[
    \varphi_\mu \in \argmax \mathscr{J}_\mu,
    \quad 
    \varphi_\nu \in \argmax \mathscr{J}_\nu. 
    \]
    Since $\varphi_\mu, \varphi_\nu$ are optimal Kantorovitch potentials for the $L^2$-optimal transportation problem in a compact domain, they are both Lipschitz with constant $\diam \Omega$. Then by optimality of $\varphi_\mu$, for the maximization of $\mathscr{J}_\mu$, and concavity of $\mathscr{J}_\mu$ we have that
    \begin{align*}
    0 
    \le \mathscr{J}_\mu(\varphi_\mu) - \mathscr{J}_\mu(\varphi_\nu) &\le \inner{\nabla \mathscr{J}_\mu(\varphi_\nu), \varphi_\mu - \varphi_\nu} = 
    \inner{\nu - \mu, \varphi_\mu - \varphi_\nu}\\
    &\le \Lip(\varphi_\mu - \varphi_\nu) W_1(\mu,\nu), 
    \end{align*}
    which can then be easily combined with Lemma~\ref{lemma.stab_momentmeasures_backbone} to obtain bounds on the distance between the sets $\argmin \mathscr{E}_\mu, \argmin \mathscr{E}_\nu$, and between $\argmax \mathscr{J}_\mu, \argmax \mathscr{J}_\nu$.     
\end{proof}

\begin{remark}
    A similar argument can be employed to estimate the distance between the sets $\argmin \mathscr{E}_{\mu,\alpha}, \argmin \mathscr{E}_{\nu,\alpha}$ in the $L^1(\mathbb{R}^d)$ topology. Indeed, for all $\alpha \ge 0$ the functional $\mathscr{J}_{\mu,\alpha}$ remains concave, with an application of the classical Prékopa-Leindler inequality. Hence taking $\varphi_{\mu,\alpha}, \varphi_{\nu,\alpha}$ such that
    \[
    \varphi_{\mu,\alpha} \in \argmax \mathscr{J}_{\mu,\alpha},
    \quad 
    \varphi_{\nu,\alpha} \in \argmax \mathscr{J}_{\nu,\alpha}, 
    \]
    these are also Lipschitz continuous, and we have the same bound 
    \begin{align*}
    0 
    \le \mathscr{J}_{\mu,\alpha}(\varphi_{\mu,\alpha}) - \mathscr{J}_{\nu,\alpha}(\varphi_{\nu,\alpha}) &\le \inner{\nabla \mathscr{J}_{\mu,\alpha}(\varphi_{\nu,\alpha}), \varphi_\mu - \varphi_\nu} = 
    \inner{\nu - \mu, \varphi_{\mu,\alpha} - \varphi_{\nu,\alpha}}\\
    &\le \diam\Omega W_1(\mu,\nu). 
    \end{align*}
    Which can also be used as in Lemma~\ref{lemma.stab_momentmeasures_backbone} to bound the distance $\argmin \mathscr{E}_{\mu,\alpha}, \argmin \mathscr{E}_{\nu,\alpha}$, but not of $\argmax \mathscr{J}_{\mu,\alpha}, \argmax \mathscr{J}_{\nu,\alpha}$, since for $\alpha > 0$ the argument via quantitative estability of Brascamp-Lieb does not apply. 
\end{remark}

\subsection{Convergence rates w.r.t.~regularization parameter}\label{subsec.conv_rates_reg} We are also able to prove results on the sharp rates of convergence of regularized moment measures introduced in~\cite{delalande2025regularized} to the classical ones from~\cite{cordero2015moment}. 

Recall that for each $\alpha>0$, the regularization present in the energies $\mathscr{E}_{\mu,\alpha}$ and $\mathscr{J}_{\mu,\alpha}$ induces a unique minimizer/maximizer pair $(\varrho_\alpha, \varphi_\alpha)$ associated with the moment regularized measure representation of $\mu$. Therefore, it makes sense to study the convergence of these pairs as $\alpha \to 0$ by estimating the distance of $\varrho_\alpha$ to $\mathcal{M}_\mu$ and of $\varphi_\alpha$ to $\mathcal{N}_\mu$.

\begin{theorem}\label{thm.conv_regularization}
    Fix a Radon measure $\mu \in \mathscr{P}_1(\mathbb{R}^d)$ whose barycenter lies at the origin and such that $\dim\supp\mu = d$. Then there exists a constant $C_{d,\mu}$ depending on the ambient dimension and on $\mu$ such that 
    \begin{equation}
        \dist_{L^1(\mathbb{R}^d)}\left(
            \varrho_\alpha,
            \mathcal{M}_{\mu}
        \right)
        \le C_{d,\mu} \alpha^{1/2}, 
    \end{equation}
    where the Gibbs probability measures 
    $
        \varrho_\alpha \propto e^{-(\varphi_\alpha^* + \frac{\alpha}{2}|x|^2)},
    $
    give the regularized moment measure representation for $\mu$.

    In addition, let ${\left(\varphi_\alpha\right)}_{\alpha > 0}$ be the family of uniquer maximizers of ${\left(\mathscr{J}_{\mu,\alpha}\right)}_{\alpha > 0}$. Then, for each $\alpha$, there exists $\lambda \in (0,1/2)$ such that, setting $v\eqdef \varphi - \varphi_\alpha$ and $\varphi_t \eqdef \varphi + \lambda v$, it holds that
    \begin{equation}
        \dist_{L^1(\mu_{\lambda})}\left(
            \varphi_\alpha,
            \mathcal{N}_{\mu}
        \right)
        \le C_{d,\mu} \alpha^{1/2},
    \end{equation}
    where $\mu_{\lambda} = \mu_{\varphi_\lambda^*}$ is the moment measure associated with the interpolation $(\varphi_\lambda)^*$. 
\end{theorem}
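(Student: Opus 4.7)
The strategy is to apply Theorem~\ref{lemma.stab_momentmeasures_backbone} twice, reducing both assertions to an $O(\alpha)$ bound on the energy gap $\mathscr{J}_\mu(\bar\varphi) - \mathscr{J}_\mu(\cdot)$ at an appropriate test function. Let $\bar\varphi \in \mathcal{N}_\mu$, let $\varphi_\alpha$ be the unique maximizer of $\mathscr{J}_{\mu,\alpha}$, and write $\psi_\alpha \eqdef \varphi_\alpha^*$, so that $\varrho_\alpha \propto e^{-(\psi_\alpha + \frac{\alpha}{2}|\cdot|^2)}$. A standing remark is that $M_2(\varrho_{\bar\varphi^*}) < +\infty$, since $\varrho_{\bar\varphi^*}$ is a log-concave probability density on $\mathbb{R}^d$ and such measures have finite moments of all orders.

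\textbf{First estimate.} The principal obstacle is that $\varrho_\alpha$ is not literally of the form $\varrho_{\varphi^*}$ for a convex $\varphi$ which is directly a competitor in $\mathscr{J}_\mu$. To bridge this, I would introduce the Legendre test function
\[
    \tilde\varphi_\alpha \eqdef \bigl(\psi_\alpha + \tfrac{\alpha}{2}|\cdot|^2\bigr)^*,
\]
which is convex and, by Fenchel-Moreau involution, satisfies $\tilde\varphi_\alpha^* = \psi_\alpha + \tfrac{\alpha}{2}|\cdot|^2$, so $\varrho_{\tilde\varphi_\alpha^*} = \varrho_\alpha$. Applying \eqref{eq.backbone_stability_Gibbs} to $\tilde\varphi_\alpha$ reduces the claim to bounding $\mathscr{J}_\mu(\bar\varphi) - \mathscr{J}_\mu(\tilde\varphi_\alpha)$, which I split as
\[
    \bigl[\mathscr{J}_\mu(\bar\varphi) - \mathscr{J}_{\mu,\alpha}(\bar\varphi)\bigr] + \bigl[\mathscr{J}_{\mu,\alpha}(\bar\varphi) - \mathscr{J}_{\mu,\alpha}(\varphi_\alpha)\bigr] + \bigl[\mathscr{J}_{\mu,\alpha}(\varphi_\alpha) - \mathscr{J}_\mu(\tilde\varphi_\alpha)\bigr].
\]
The middle bracket is non-positive by maximality of $\varphi_\alpha$ for $\mathscr{J}_{\mu,\alpha}$. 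The third equals $\int (\tilde\varphi_\alpha - \varphi_\alpha)\dd\mu$ --- the log-partition contributions cancel because $\tilde\varphi_\alpha^* = \psi_\alpha + \tfrac{\alpha}{2}|\cdot|^2$ --- and is non-positive since the Moreau-Yosida envelope satisfies $\tilde\varphi_\alpha \le \varphi_\alpha$ pointwise. The first bracket rewrites as $-\log \mathbb{E}_{\varrho_{\bar\varphi^*}}\bigl[e^{-\frac{\alpha}{2}|x|^2}\bigr]$, which by Jensen's inequality is at most $\frac{\alpha}{2} M_2(\varrho_{\bar\varphi^*})$. Inserting this into \eqref{eq.backbone_stability_Gibbs} yields the first claim with $C_{d,\mu} = C_d \sqrt{M_2(\varrho_{\bar\varphi^*})/2}$.

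\textbf{Second estimate.} I would apply \eqref{eq.backbone_stability_potentials} directly with test function $\varphi_\alpha$; the backbone theorem then supplies the parameter $\lambda \in (0,1/2)$ and the interpolation moment measure $\mu_\lambda$. Since $\tfrac{\alpha}{2}|\cdot|^2 \ge 0$ pointwise, one has $\mathscr{J}_\mu(\varphi_\alpha) \ge \mathscr{J}_{\mu,\alpha}(\varphi_\alpha)$, while $\mathscr{J}_{\mu,\alpha}(\varphi_\alpha) \ge \mathscr{J}_{\mu,\alpha}(\bar\varphi)$ by maximality of $\varphi_\alpha$. Chaining,
\[
    0 \le \mathscr{J}_\mu(\bar\varphi) - \mathscr{J}_\mu(\varphi_\alpha) \le \mathscr{J}_\mu(\bar\varphi) - \mathscr{J}_{\mu,\alpha}(\bar\varphi) \le \tfrac{\alpha}{2} M_2(\varrho_{\bar\varphi^*}),
\]
using the same Jensen bound of the previous step on the rightmost term. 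The main conceptual step --- and hence the main obstacle --- is the Moreau-Yosida/Legendre trick $\tilde\varphi_\alpha$ that brings $\varrho_\alpha$ within the scope of Theorem~\ref{lemma.stab_momentmeasures_backbone}; the rest is elementary energy monotonicity plus a single use of Jensen's inequality, with the $\mu$-dependence of $C_{d,\mu}$ entering only through the log-concavity moment $M_2(\varrho_{\bar\varphi^*})$.
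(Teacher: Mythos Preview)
Your argument is correct and, for the second estimate, essentially the same as the paper's: both bound $\mathscr{J}_\mu(\bar\varphi)-\mathscr{J}_\mu(\varphi_\alpha)$ by first using $\mathscr{J}_\mu(\varphi_\alpha)\ge \mathscr{J}_{\mu,\alpha}(\varphi_\alpha)$ and then comparing to the value at $\bar\varphi$. The paper goes through strong duality ($\mathscr{J}_\mu(\bar\varphi)-\mathscr{J}_{\mu,\alpha}(\varphi_\alpha)=\mathscr{E}_{\mu,\alpha}(\varrho_\alpha)-\mathscr{E}_\mu(\varrho)\le \mathscr{E}_{\mu,\alpha}(\varrho)-\mathscr{E}_\mu(\varrho)=\tfrac{\alpha}{2}M_2(\varrho)$), while you stay on the primal side and use $\mathscr{J}_{\mu,\alpha}(\varphi_\alpha)\ge \mathscr{J}_{\mu,\alpha}(\bar\varphi)$ plus Jensen; these are equivalent manipulations yielding the same constant.

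For the first estimate, your route genuinely differs. The paper simply says ``combining this estimate with Theorem~\ref{lemma.stab_momentmeasures_backbone}, the result follows,'' but the backbone theorem applied at $\varphi_\alpha$ literally controls $\varrho_{\varphi_\alpha^*}\propto e^{-\varphi_\alpha^*}$, not $\varrho_\alpha\propto e^{-(\varphi_\alpha^*+\frac{\alpha}{2}|x|^2)}$. Your device $\tilde\varphi_\alpha=(\psi_\alpha+\tfrac{\alpha}{2}|\cdot|^2)^*$ --- the Moreau--Yosida envelope of $\varphi_\alpha$, so that $\tilde\varphi_\alpha\le\varphi_\alpha$ and $\varrho_{\tilde\varphi_\alpha^*}=\varrho_\alpha$ exactly --- places $\varrho_\alpha$ directly in the scope of the backbone and makes the three-term splitting transparent. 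This is cleaner than the paper's presentation, which leaves that identification implicit; your approach buys a self-contained argument at the cost of one extra definition.
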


\begin{example}\label{example.unbounded2ndmom}
  Set $S_\lambda(x) = \lambda x$, and consider the scallings 
  \[
      \mu_\lambda = {(S_\lambda)}_\sharp\mu, \text{ and }
      \varrho_\lambda = e^{\psi_\lambda} 
      \text{ with }
      {(\varphi_\lambda)}_\sharp\varrho_\lambda = \mu.
  \]  
  Then one can check that $\varphi_\lambda(\cdot) = \lambda^d \varphi(\lambda\cdot)$, where $\varphi$ is the moment map. As a result we have that 
  \[
    M_2(\mu_\lambda) = \lambda^2 M_2(\mu), \quad 
    M_2(\varrho_\lambda) = \lambda^{-2} M_2(\varrho),
  \]
  so it is perfectly possible that $M_2(\varrho)$ explodes while $M_2(\mu)$ remains bounded.
\end{example}

For the purposes of Theorem~\ref{thm.conv_regularization}, the only important fact is that $M_2(\varrho_\mu)$ is finite, which is always the case for log-concave measures, but in the next section we identify conditions on $\mu$ which ensure that $M_2(\varrho_\mu)$ can be controlled uniformly.

As we will see in the proof below, the constant $C_{d,\mu} = C_d M_2(\varrho)$, where $\varrho = \varrho_\mu$, the Gibbs measure associated with the moment measure representation of $\mu$. Therefore, the constant depends on $\mu$ only via the second moment of $\varrho_\mu$, which is finite since $\varrho_\mu$ is log-concave, but cannot be controlled in general only by bounding the second moment of $\mu$ as the following example shows. 

\begin{proof}[Proof of Theorem~\ref{thm.conv_regularization}]
    The goal is to compare the $\mathscr{J}_\mu$ energies of $\varphi_\alpha$, the unique maximizer of $\mathscr{J}_{\mu,\alpha}$ for $\alpha > 0$, and the maximizer $\varphi$ of $\mathscr{J}_\mu$ such that $\varrho = e^{-\varphi^*}$ is a centered probability measure.
    
    From Theorem~\ref{thm.characterization_moment_measures}, we know that the unique minimizer of $\mathscr{E}_{\mu,\alpha}$ can be obtained from $\varphi_\alpha$ as 
    \[
        \varrho_\alpha \propto e^{-(\varphi_\alpha^* + \frac{\alpha}{2}|x|^2)}, 
    \]
    and similarly for the case $\alpha = 0$, we can map $\varphi$ into a minimizer of $\mathscr{E}_\mu$ defined as the Gibbs probability measure
    \[
        \varrho \propto e^{-\varphi^*}. 
    \]

    To compare the $\mathscr{J}_{\mu}$ energies, we start by recalling the regularized energy $\mathscr{J}_{\mu,\alpha}$ and noticing the following trivial bound 
    \begin{align*}
        \mathscr{J}_{\mu,\alpha}(\varphi_\alpha) 
        &= 
        \log\int_{\mathbb{R}^d} e^{-(\varphi_\alpha^* + \frac{\alpha}{2}|x|^2)}\dd x 
        - 
        \int_{\mathbb{R}^d} \varphi_\alpha \dd \mu \\
        &\le 
        \log\int_{\mathbb{R}^d} e^{-\varphi_\alpha^*}\dd x 
        - 
        \int_{\mathbb{R}^d} \varphi_\alpha \dd \mu
        = 
        \mathscr{J}_{\mu}(\varphi_\alpha). 
    \end{align*}
    Therefore we have that 
    \begin{align*}
        \mathscr{J}_{\mu}(\varphi) - 
        \mathscr{J}_{\mu}(\varphi_\alpha) 
        &\le 
        \mathscr{J}_{\mu}(\varphi) 
        - 
        \mathscr{J}_{\mu,\alpha}(\varphi_\alpha) 
        = \mathscr{E}_{\mu,\alpha}(\varrho_\alpha)
        - 
        \mathscr{E}_{\mu}(\varrho) \\ 
        &\le 
        \mathscr{E}_{\mu,\alpha}(\varrho)
        - 
        \mathscr{E}_{\mu}(\varrho) 
        = \alpha M_2(\varrho) = C_\mu \alpha,
    \end{align*}
    where the first equality comes from the strong duality formula between the minimization of $\mathscr{E}_{\mu,\alpha}$ and the maximization of $\mathscr{J}_{\mu,\alpha}$ proved in Theorem~\ref{thm.characterization_moment_measures}. Since $\varrho$ is a log-concave measure, its second moment is finite and hence a constant $C_\mu$ depending only on $\mu$. 
    
    Combining this estimate with Lemma~\ref{lemma.stab_momentmeasures_backbone}, the result follows. 
\end{proof}

\subsection{Stability over $\mathbb{R}^d$}\label{subsec.stabilityRd}

The stability result in a compact setting proven in Section~\ref{subsec.stability_compact} covers most cases of applications, since for computational purposes one usually needs to truncate the measures to a compact domain. Although the restriction of $\mu$ to a compact domain is not problematic due to many interesting cases already being in this setting, the Gibbs measure $\varrho_\mu$ is in general supported over the whole $\mathbb{R}^d$. Therefore, it is interesting to obtain stability results also for moment measure representations over the whole $\mathbb{R}^d$.

As mentionned above, this was achieved for regularized moment measures in~\cite{delalande2025regularized}, using the strong geodesic convexity of the functional $\mathscr{E}_{\mu,\alpha}$. The convergence results from Section~\ref{subsec.conv_rates_reg} depending on $M_2(\varrho_\mu)$ give a hint on why the stability for regularized moment measures appears to be a more direct issue, as in this case one can explect to control the second moment of the Gibbs measure $\varrho_{\mu,\alpha}$ independently of $\mu$, thanks to the regularization term. Without such a uniform control, we cannot know if the constant multiplying any quantitative stability inequality will remain bounded. Example~\ref{example.unbounded2ndmom} above shows that we cannot in general bound $M_2(\varrho_\mu)$ in terms of $M_2(\mu)$.

In this section, we identify two very natural classes of measures $\mu$, for which $M_2(\varrho_\mu)$ remains uniformly bounded. We can then leverage the stability proof via geodesic convexity from~\cite{delalande2023quantitative} and the convergence rates in Theorem~\ref{thm.conv_regularization} to obtain quantitative stability results for moment measure representations over $\mathbb{R}^d$ in terms of the $W_2$-distance. 

The first approach is to impose a lower bound on the quantity 
\begin{equation}
    \Theta(\mu) \eqdef 
    \inf_{\theta \in \mathbb{S}^{d-1}} 
    \int_{\mathbb{R}^d} |y\cdot\theta| \dd \mu(y),
\end{equation}
that measures the minimal spread of the measure $\mu$ in all directions, beging therefore very natural for the study of moment measures. We shall also require that $\mu$ is supported on a convex and compact set $\Omega$, so that we can control the Lipschitz constant of the potential $\varphi_\mu^*$. Therefore, we consider the class of measures defined as
\begin{equation}\label{eq.class_meas_half_spaces}
    \mathcal{K}_{\vartheta} \eqdef 
      \left\{
        \mu \in \mathscr{P}(\Omega) : 
        \begin{array}{c}
            \mu \text{ is centered, } \\ 
            \displaystyle
            \Theta(\mu) \ge \vartheta
        \end{array} 
      \right\}.
\end{equation}

The fact that $\mu \in \mathscr{P}(\Omega)$ implies that the potential $\varphi_\mu^*$ is Lipschitz continuous with constant $L = \diam(\Omega)$. This is not sufficient to control $M_2(\varrho_\mu)$, but enforcing the lower bound on $\Theta(\mu)$ we have
\begin{equation}\label{eq.lower_bound_all_directions}
    \Theta(\mu)
    = 
    \inf_{\theta \in \mathbb{S}^{d-1}} 
    \int_{\mathbb{R}^d} |\nabla \varphi_\mu^*(x)\cdot\theta| e^{-\varphi_\mu^*(x)} \dd x
    \ge \vartheta. 
\end{equation}
Combining this with the Lipschitz continuity of $\varphi_\mu^*$, we can employ the following growth estimate for convex functions due to Klartag~\cite{klartag2014logarithmically}.
\begin{lemma}{\cite[Lemma 2]{klartag2014logarithmically}}\label{lemma.klartag_growth_convex}
    Suppose that $\psi:\mathbb{R}^d\to\mathbb{R}$ is a $L$-Lipschitz, convex function such that $\varrho \propto e^{-\psi}$ has barycenter at the origin. If there exists $c>0$ such that
    \[
        \inf_{\theta \in \mathbb{S}^{d-1}} 
        \int_{\mathbb{R}^d} |\nabla\psi(x)\cdot\theta| e^{-\psi(x)} \dd x
        \ge c, 
    \]
    then there exists $\varepsilon,\beta_1,\beta_2$ depending only on $d,L,c$ such that
    \[
        \varepsilon|x| - \beta_1 \le \psi(x) \le L|x| + \beta_2, \quad \text{ for all } x \in \mathbb{R}^d.
    \]
    In particular, the second moment of $\varrho$ is bounded by a constant depending only on $d,L,c$.
\end{lemma}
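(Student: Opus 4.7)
The plan is to establish the linear lower bound $\psi(x)\ge \ell|x| - \beta_1$ first, since the Lipschitz hypothesis gives $\psi(x)\le \psi(0)+L|x|$ essentially for free --- so the upper bound on $\psi$ reduces to an a posteriori upper bound on $\psi(0)$. The core task is therefore to show that the convex recession function $\psi_\infty(\theta) \eqdef \lim_{t\to\infty} \psi(t\theta)/t$ is bounded below by some $\ell = \ell(d,L,c)>0$ uniformly in $\theta\in \mathbb{S}^{d-1}$. By convexity this uniform recession bound is equivalent to a directional linear minorant, which together with local boundedness of $\psi$ delivers the desired global affine lower bound $\psi(x)\ge \ell|x|-\beta_1$, with $\beta_1$ controlled by the behavior of $\psi$ near the origin.

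To establish the uniform recession bound I would argue by contradiction. Suppose $\psi_\infty(\theta_0) < \epsilon$ for some $\theta_0$ and small $\epsilon$. The one-dimensional marginal $h_{\theta_0}(t) \eqdef \int_{\theta_0^\perp} e^{-\psi(t\theta_0+y)}\dd y$ is log-concave by Prékopa-Leindler, has unit mass, and has vanishing first moment by the barycenter hypothesis; smallness of $\psi_\infty(\theta_0)$ forces the right tail of $h_{\theta_0}$ to decay at a rate no faster than $\epsilon$, so $h_{\theta_0}$ is either heavy-tailed or very flat. Meanwhile, the decay of $e^{-\psi}$ at infinity gives $\int \partial_{\theta_0} e^{-\psi}\dd x = 0$, hence
\[
\int (\partial_{\theta_0}\psi)_+ e^{-\psi}\dd x \;=\; \int (\partial_{\theta_0}\psi)_- e^{-\psi}\dd x \;\ge\; c/2,
\]
and the Lipschitz bound $|\partial_{\theta_0}\psi|\le L$ combined with the monotonicity of $\partial_{\theta_0}\psi$ along $\theta_0$-lines (convexity of $\psi$) forces $\varrho$ to put substantial mass on both sides of the zero set of $\partial_{\theta_0}\psi$. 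Pushing this mass spread through the log-concave one-dimensional density $h_{\theta_0}$ contradicts the slow decay if $\epsilon$ is below a threshold depending on $d, L, c$, which yields the lower bound $\ell(d,L,c)$.

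With $\psi(x)\ge \ell |x| - \beta_1$ in hand, the upper bound on $\psi(0)$ follows by a second contradiction argument: if $\psi(0)=M$ were very large, Lipschitz continuity would give $\psi(x)\ge M-L|x|$ near the origin while the lower bound gives $\psi(x)\ge \ell|x|-\beta_1$ far from it, and crossing the two bounds at the radius $R\sim (M+\beta_1)/(L+\ell)$ and integrating $e^{-\psi}$ over $\{|x|<R\}$ and $\{|x|\ge R\}$ produces an upper bound on $\int e^{-\psi}$ that tends to $0$ as $M\to\infty$, contradicting $\int e^{-\psi}=1$. This pins $\psi(0)\le \beta_2=\beta_2(d,L,c)$, and Lipschitz continuity closes the upper bound. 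The second moment is then immediate:
\[
  M_2(\varrho) \;\le\; e^{\beta_1}\int_{\mathbb{R}^d} |x|^2 e^{-\ell|x|}\dd x \;<\;+\infty,
\]
with an explicit constant depending only on $d, L, c$. The principal obstacle is the quantitative passage from near-vanishing of $\psi_\infty(\theta_0)$ to a contradiction with the spread hypothesis: the spread is a global integral condition while the recession is purely directional at infinity, so one must localize the mass on lines parallel to $\theta_0$, exploit log-concavity of $h_{\theta_0}$ to control the profile on each such line, and use the Lipschitz bound to convert pointwise gradient control into integrated control in a way that is uniform in the transverse variable.
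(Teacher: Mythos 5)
The paper does not reprove this lemma; it is cited directly from \cite[Lemma 2]{klartag2014logarithmically}, so there is no in-paper argument to compare against. On the merits, the tools you invoke --- one-dimensional marginals, Prékopa log-concavity, the divergence identity $\int(\partial_{\theta_0}\psi)_+\,\dd\varrho=\int(\partial_{\theta_0}\psi)_-\,\dd\varrho$, the Lipschitz bound --- are the right ones, but the order of deductions is circular. In step~2 you derive the global minorant $\psi(x)\ge\ell|x|-\beta_1$ from the uniform recession bound ``together with local boundedness of $\psi$''; in step~3 you obtain the upper bound on $\psi(0)$ --- that very local boundedness --- from the minorant. The minorant genuinely does \emph{not} follow from the recession bound plus convexity, Lipschitz regularity, normalization and centering alone. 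Consider $\psi_M(x)=\max\bigl(M,\,L(|x|-R_M)\bigr)$ with $R_M$ chosen so that $\int e^{-\psi_M}=1$: it is $L$-Lipschitz, convex, centered, of unit mass, and $\psi_{M,\infty}(\theta)\equiv L$ for every $\theta$, yet $R_M\sim e^{M/d}$, so any affine minorant $\ell|x|-\beta_1$ with a fixed $\ell>0$ forces $\beta_1\to\infty$ as $M\to\infty$. The only hypothesis this family eventually violates is the spread bound $\Theta\ge c$, since its moment measure places mass tending to $1$ at the origin; hence the spread condition must already enter the derivation of the minorant, essentially by first bounding $\psi(0)$ (equivalently the plateau scale), and cannot be deferred to step~3.

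A secondary gap: the recession-contradiction sketch is not carried through. The set $\{\partial_{\theta_0}\psi>0\}$ meets each line $t\mapsto t\theta_0+y$ in a half-line whose threshold depends on $y$, so its $\varrho$-mass is not a tail event of the one-dimensional marginal $h_{\theta_0}$, and ``pushing this mass spread through $h_{\theta_0}$'' needs a genuine geometric reduction that you do not supply. Moreover, in the family above the recession equals the Lipschitz constant $L$ identically, so the delicate content is not the recession bound at all but a quantitative bound, extracted from $\Theta\ge c$, on the scale at which $\psi$ must begin to grow linearly; as structured, your plan never produces such a bound.
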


We can now state our quantitative stability result for measures in $\mathcal{K}_\vartheta$.
\begin{theorem}\label{thm.stability_moment_measures_Rd}
    Given $\Omega$ a convex and compact subset of $\mathbb{R}^d$ and $\vartheta>0$, then for every $p>2$ there exists a constant $C$ such thats for all $\mu, \nu \in \mathcal{K}_\vartheta$
    \begin{equation}\label{eq.stability_moment_measures_Rd}
        \dist_{W_2}
        \left(
            \mathcal{M}_\mu,
            \mathcal{M}_\nu
        \right)
        \le 
        C {W_2(\mu, \nu)}^{\frac{p-2}{6p-4}}, 
    \end{equation}
    where $C$ depends only on the $d$, $\diam\Omega$, $\vartheta$, and grows linearly in $p$. 
\end{theorem}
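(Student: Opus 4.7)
The plan is to prove the theorem via a triangle inequality passing through the regularized moment measures of Theorem~\ref{thm.characterization_moment_measures}, then interpolating the $L^1$-convergence rates of Theorem~\ref{thm.conv_regularization} with the $W_2$-stability for the regularized problem obtained in~\cite{delalande2025regularized}. For any regularization parameter $\alpha>0$ and any representatives $\varrho_\mu^*\in\mathcal{M}_\mu$, $\varrho_\nu^*\in\mathcal{M}_\nu$, the decomposition
\begin{align*}
    \dist_{W_2}(\mathcal{M}_\mu, \mathcal{M}_\nu)
    \le
    W_2(\varrho_\mu^*, \varrho_{\mu,\alpha})
    +
    W_2(\varrho_{\mu,\alpha}, \varrho_{\nu,\alpha})
    +
    W_2(\varrho_{\nu,\alpha}, \varrho_\nu^*)
\end{align*}
reduces the problem to controlling two outer ``bias'' terms and a middle ``stability at fixed $\alpha$'' term, where $\varrho_{\mu,\alpha},\varrho_{\nu,\alpha}$ denote the unique Gibbs densities of the regularized moment measure representations of $\mu,\nu$.

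For the two outer terms, Theorem~\ref{thm.conv_regularization} gives the $L^1$-bound $\dist_{L^1}(\varrho_{\mu,\alpha},\mathcal{M}_\mu)\le C_{d,\mu}\alpha^{1/2}$, which Lemma~\ref{lemma.bound_wasserstein_momentsL1} with $q=2$ converts into a $W_2$-bound of order $(M_p(\varrho_{\mu,\alpha})+M_p(\varrho_\mu^*))^{1/p}\,\alpha^{(p-2)/(4p)}$. For the middle term, the $\alpha$-strong geodesic convexity of $\mathscr{E}_{\mu,\alpha}$ established in~\cite{delalande2025regularized}, combined with the standard Lipschitz estimate $|\mathcal{T}(\varrho,\mu)-\mathcal{T}(\varrho,\nu)|\lesssim W_2(\mu,\nu)$ (uniform once second moments are under control, via $\mathcal{T}=\tfrac{1}{2}(M_2(\varrho)+M_2(\cdot)-W_2^2(\varrho,\cdot))$), yields $W_2(\varrho_{\mu,\alpha},\varrho_{\nu,\alpha}) \le C\,\alpha^{-1/2}\,W_2(\mu,\nu)^{1/2}$. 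Taken together,
\[
    \dist_{W_2}(\mathcal{M}_\mu,\mathcal{M}_\nu)
    \le
    C\alpha^{(p-2)/(4p)}
    +
    C\alpha^{-1/2}W_2(\mu,\nu)^{1/2},
\]
and balancing the two contributions with $\alpha\sim W_2(\mu,\nu)^{2p/(3p-2)}$ produces exactly the exponent $(p-2)/(6p-4)$ announced in the statement.

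The main obstacle, and the sole purpose of the class $\mathcal{K}_\vartheta$, is to guarantee that \emph{every} constant above is uniform in $\mu,\nu$. The support condition $\mu\in\mathscr{P}(\Omega)$ forces the dual potential $\psi_\mu=\varphi_\mu^*$ to be Lipschitz with constant $\diam\Omega$ (as an optimal Kantorovich potential with compactly supported target), while $\Theta(\mu)\ge\vartheta$ is precisely the hypothesis of Klartag's Lemma~\ref{lemma.klartag_growth_convex}. That lemma supplies the linear-growth minorization $\psi_\mu(x)\ge\ell|x|-\beta_1$ with $\ell,\beta_1$ depending only on $d,\diam\Omega,\vartheta$, from which $M_p(\varrho_\mu)\le C\,\Gamma(p+d)/\ell^{p+d}$ follows by direct integration; the same bound extends to $\varrho_{\mu,\alpha}$ since the additional quadratic weight only improves tails. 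Via Stirling's formula, $(M_p)^{1/p}$ grows linearly in $p$, which simultaneously bounds the $L^1\to W_2$ interpolation constant and produces the linear $p$-dependence of the final constant $C$. The same uniform moment bound controls $C_{d,\mu}$ in Theorem~\ref{thm.conv_regularization} and the Lipschitz quantities entering the middle term, thereby closing the argument.
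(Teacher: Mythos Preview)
Your proposal is correct and follows essentially the same route as the paper: the triangle inequality through the regularized Gibbs measures, the $L^1$ convergence rate of Theorem~\ref{thm.conv_regularization} upgraded to $W_2$ via Lemma~\ref{lemma.bound_wasserstein_momentsL1}, the $\alpha$-strong geodesic convexity argument for the middle term, uniform moment control through Klartag's Lemma~\ref{lemma.klartag_growth_convex}, and the final optimization $\alpha\sim W_2(\mu,\nu)^{2p/(3p-2)}$. The paper carries out the middle-term estimate in slightly more detail (expanding $\mathcal{T}$ explicitly and using the triangle inequality for $W_2$), but the structure and all key ingredients coincide with what you outline.
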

\begin{proof}[Proof of Theorem~\ref{thm.stability_moment_measures_Rd}]
    The proof consists of combining the convergence rates of regularized moment measures from Theorem~\ref{thm.conv_regularization} with strong geodesic convexity argument from~\cite{delalande2023quantitative} and the uniform bounds on the second moment that we obtain for $\varrho_\mu$ whenever $\mu \in \mathcal{K}_\vartheta$ through Lemma~\ref{lemma.klartag_growth_convex}.

    First, notice that for all $\mu \in \mathcal{K}_\vartheta$ and $\alpha \ge 0$, letting $\varrho_{\mu,\alpha}$ denote the Gibbs measure associated with the (regularized if $\alpha > 0$) moment measure representation of $\mu$, we have that $M_2(\varrho_{\mu,\alpha}) \le C_{\Omega,\vartheta,d}$, where the constant depends only on $\Omega,c$ and $d$. This is a direct consequence of Lemma~\ref{lemma.klartag_growth_convex}, since the potentials $\varphi_{\mu,\alpha}^*$ are all $\diam(\Omega)$-Lipschitz continuous.

    For the reader's convenience, we detail the part of~\cite{delalande2025regularized}'s strong geodesic convexity argument that is relevant to us. Given $\mu,\nu \in \mathcal{K}_\vartheta$ and some $\alpha > 0$, let ${\left(\varrho_{\alpha,t}\right)}_{t \in [0,1]}$ be the constant speed geodesic with endpoints 
    \[
        \varrho_{\alpha,0} = \varrho_{\mu,\alpha}, \quad 
        \varrho_{\alpha,1} = \varrho_{\nu,\alpha}.
    \]
    Since both $\mathscr{E}_{\mu,\alpha}$ and $\mathscr{E}_{\nu,\alpha}$ are $\alpha$-strongly geodesically convex functionals over $\mathscr{P}_2(\mathbb{R}^d)$, we have that
    \begin{align*}
        \mathscr{E}_{\mu,\alpha}(\varrho_{\mu,\alpha}) 
        \le 
        \mathscr{E}_{\mu,\alpha}(\varrho_{\alpha,1/2}) 
        \le  
        \frac{1}{2}\mathscr{E}_{\mu,\alpha}(\varrho_{\mu,\alpha}) + 
        \frac{1}{2}\mathscr{E}_{\mu,\alpha}(\varrho_{\nu,\alpha}) 
        - 
        \frac{\alpha}{8}W_2^2(\varrho_{\mu,\alpha}, \varrho_{\nu,\alpha}),
    \end{align*}
    and similarly for $\mathscr{E}_{\nu,\alpha}$. Rearranging we get 
    \begin{align*}
        \frac{\alpha}{4}W_2^2(\varrho_{\mu,\alpha}, \varrho_{\nu,\alpha}) 
        &\le 
        \mathscr{E}_{\mu,\alpha}(\varrho_{\nu,\alpha}) 
        -
        \mathscr{E}_{\mu,\alpha}(\varrho_{\mu,\alpha}),\\
        \frac{\alpha}{4}W_2^2(\varrho_{\mu,\alpha}, \varrho_{\nu,\alpha}) 
        &\le 
        \mathscr{E}_{\nu,\alpha}(\varrho_{\mu,\alpha}) 
        -
        \mathscr{E}_{\nu,\alpha}(\varrho_{\nu,\alpha}). 
    \end{align*}
    
    Adding these inequalities, and recalling that $\mathcal{T}(\mu_0,\mu_1) = -W_2^2(\mu_0,\mu_1) + M_2(\mu_0) + M_2(\mu_1)$ and $W_2^2(\mu_0,\mu_1) \le  2M_2(\mu_0) + 2M_2(\mu_1)$, we obtain 
    \begin{align*}
        \frac{\alpha}{4}W_2^2(\varrho_{\mu,\alpha}, \varrho_{\nu,\alpha}) 
        &\le 
        \mathcal{T}(\varrho_{\nu,\alpha}, \mu) 
        - \mathcal{T}(\varrho_{\nu,\alpha}, \nu)
        +  \mathcal{T}(\varrho_{\mu,\alpha}, \nu) 
        - \mathcal{T}(\varrho_{\mu,\alpha}, \mu)
        \\
        =&
        (W_2(\varrho_{\nu,\alpha}, \mu) + W_2(\varrho_{\nu,\alpha}, \nu)) 
        (W_2(\varrho_{\nu,\alpha}, \mu) - W_2(\varrho_{\nu,\alpha}, \nu))\\
        &+ 
        (W_2(\varrho_{\mu,\alpha}, \nu) + W_2(\varrho_{\mu,\alpha}, \mu)) 
        (W_2(\varrho_{\mu,\alpha}, \nu) - W_2(\varrho_{\mu,\alpha}, \mu))\\
        \le& 
        (
            W_2(\varrho_{\nu,\alpha}, \mu) + W_2(\varrho_{\nu,\alpha}, \nu)
            + 
            W_2(\varrho_{\mu,\alpha}, \nu) + W_2(\varrho_{\mu,\alpha}, \mu)
        ) 
        W_2(\mu, \nu)\\ 
        \le& 
        C_{\Omega,\vartheta,d} W_2(\mu, \nu),
    \end{align*}
    where we have used that for any $\mu,\nu \in \mathcal{K}_\vartheta$, all second moments above are uniformly bounded by a constant depending only on $\Omega,\vartheta$ and $d$. In particular, this gives an explicit behaviour on $\alpha$ for the bound proven in~\cite[Theorem~3.1]{delalande2025regularized} for the particular case of $\mu,\nu \in \mathcal{K}_\vartheta$, i.e. 
    \begin{equation}\label{eq.regularized_stability_explicit_alpha}
        W_2(\varrho_{\mu,\alpha}, \varrho_{\nu,\alpha})
        \le 
        {\alpha}^{-1/2} C_{\Omega,\vartheta,d}{W_2(\mu, \nu)}^{1/2}. 
    \end{equation}

    To finish the proof we need to exploit the explicit rates of convergence of regularized moment measures from Theorem~\ref{thm.conv_regularization}. The issue is that these convergence rates concern the $L^1$ distance between $\varrho_{\mu,\alpha}$ and $\mathcal{M}_\mu$. But for absolutely continuous measures with bounded $p$-moments, the $L^1$ norm can be used to control Wasserstein distances. There exists a constant $C_{p,q}$ such that 
    \begin{equation}\label{eq.bound_WassersteinL1}
        W_q(\varrho_0,\varrho_1) 
        \le 
        C_{p,q} {(M_p(\varrho_0) + M_p(\varrho_1))}^{1/p} 
        \norm{\varrho_0 - \varrho_1}_{L^1}^{1/q - 1/p},
    \end{equation}
    see Lemma~\ref{lemma.bound_wasserstein_momentsL1} in Section~\ref{sec.tools}. 
    
    As a result, combining~\eqref{eq.bound_WassersteinL1}, with the stability of regularized moment measures from~\eqref{eq.regularized_stability_explicit_alpha}, and the explicit rates of convegence from~\ref{thm.conv_regularization}, we let $\varrho_\mu = \varrho_\mu(\cdot + x_\alpha) \in \mathcal{M}_\mu$ be a Gibbs measure associated with the moment measure representation of $\mu$ such that 
    \[
        \norm{\varrho_{\mu,\alpha} - \varrho_{\mu}}_{L^1(\mathbb{R}^d)} \le 2\dist_{L^1}(\varrho_{\mu,\alpha}, \mathcal{M}_\mu), \le C_{\Omega,d,\vartheta} \alpha^{1/2},
    \]
    and similarly for $\varrho_\nu$. Then, for every $\alpha > 0$, we have
    \begin{align*}
        \dist_{W_2}(\mathcal{M}_\mu, \mathcal{M}_\nu) 
        \le 
        W_2(\varrho_{\mu}, \varrho_{\nu})
        &\le 
        W_2(\varrho_{\mu}, \varrho_{\mu,\alpha})
        + 
        W_2(\varrho_{\mu,\alpha}, \varrho_{\nu,\alpha})
        + 
        W_2(\varrho_{\nu,\alpha}, \varrho_{\nu})\\ 
        &\le 
        C\left(
            \alpha^{1/4 - 1/2p} + \alpha^{-1/2}{W_2(\mu, \nu)}^{1/2}
        \right),
    \end{align*}
    where the constant $C$ above depends on $\Omega, \vartheta, d, p$, more specifically it depends on $M_p(\varrho_\mu)^{1/p}$, which behaves linearly in $p$, so that the constant explodes as $p\to \infty$. 

    But for every finite $p>2$, we can optimize the above estimate in $\alpha$ to obtain the best possible bound, which is achieved for
    \[
        \alpha = {W_2(\mu, \nu)}^{\frac{2p}{3p-2}},
    \]
    which gives 
    \[
        \dist_{W_2}(\mathcal{M}_\mu, \mathcal{M}_\nu) 
        \le 
        C {W_2(\mu, \nu)}^{\frac{p-2}{6p-4}},
    \]
    for all $p > 2$, with a constant $C$ depending on $\Omega, \vartheta, d, p$.
\end{proof}

\begin{proposition}\label{prop.constant_depends_vartheta}
    Inequality~\eqref{eq.stability_moment_measures_Rd} never holds with a constant independent of $\vartheta$. 
\end{proposition}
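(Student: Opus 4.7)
The plan is to exhibit an explicit sequence in $d=1$ along which the right-hand side of~\eqref{eq.stability_moment_measures_Rd} is forced to vanish while the left-hand side stays bounded away from zero, so that no constant $C$ independent of $\vartheta$ can be valid. The underlying heuristic is the one already mentioned in Section~\ref{subsec.stabilityRd}: as $\Theta(\mu)\to 0^+$, the Gibbs measure $\varrho_\mu$ must disperse to infinity (otherwise Lemma~\ref{lem.moms_essentially_continuous} would yield a moment measure representation of a measure supported on a hyperplane), and this dispersion destroys any uniform stability.

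On $\Omega=[-1,1]$, I would take the two-atom sequence $\mu_n\eqdef\tfrac{1}{2}(\delta_{1/n}+\delta_{-1/n})\in\mathcal{K}_{1/n}$. The first step is to identify the moment potential by hand: since $\psi_n$ is convex with $\partial\psi_n$ taking only the values $\pm 1/n$, and $e^{-\psi_n}$ has to split mass evenly around the kink, one is forced into $\psi_n(x)=\log(2n)+|x|/n$, so that
\[
\varrho_n(x)=\frac{1}{2n}e^{-|x|/n},\qquad M_2(\varrho_n)=2n^2.
\]
The blow-up of $M_2(\varrho_n)$ is the true source of the contradiction. The second step is two one-dimensional transport comparisons: matching atoms to atoms gives $W_2(\mu_n,\mu_m)=|1/n-1/m|$, whereas the reverse triangle inequality against $\delta_0$ in $(\mathscr{P}_2(\mathbb{R}),W_2)$ yields
\[
W_2(\varrho_n,\varrho_m)\ge \bigl|\sqrt{M_2(\varrho_n)}-\sqrt{M_2(\varrho_m)}\bigr|=\sqrt{2}\,|n-m|.
\]

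The third step is to pass from $W_2(\varrho_n,\varrho_m)$ to $\dist_{W_2}(\mathcal{M}_{\mu_n},\mathcal{M}_{\mu_m})$: since each $\mathcal{M}_{\mu_n}$ is the translation orbit of $\varrho_n$, one has $\dist_{W_2}(\mathcal{M}_{\mu_n},\mathcal{M}_{\mu_m})=\inf_{x_0}W_2(\varrho_n,\varrho_m(\cdot-x_0))$, and the one-dimensional quantile formula shows this is a convex quadratic in $x_0$ minimized at the difference of the barycenters, namely $x_0=0$. A hypothetical uniform version of~\eqref{eq.stability_moment_measures_Rd} would then force $\sqrt{2}\,|n-m|\le C\,(|n-m|/(nm))^{(p-2)/(6p-4)}$ for all $n,m$, which is violated already by $m=n+1$ as $n\to\infty$. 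I expect the only delicate point in writing this out to be the explicit identification of $\psi_n$, because $\mu_n$ is discrete and the relation $(\nabla\psi_n)_\sharp\varrho_n=\mu_n$ has to be read through the subdifferential of a piecewise-affine convex function; everything else is a short $1$D computation. Finally, the extension to arbitrary $d\ge 2$ is obtained by tensoring $\mu_n$ with a fixed smooth, centered, compactly supported reference measure on $\mathbb{R}^{d-1}$, which, by the product structure of the moment representation, preserves both $\Theta\sim 1/n$ and the $n^2$ growth of the second moment of the associated Gibbs measure.
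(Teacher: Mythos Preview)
Your argument is correct and takes a genuinely different route from the paper's. The paper argues by \emph{soft contradiction with the existence theory}: it mollifies a measure $\mu$ supported on a segment $L\subset\mathbb{R}^d$ (hence on a hyperplane for $d\ge 2$), observes that a uniform version of~\eqref{eq.stability_moment_measures_Rd} would make the sequence of Gibbs measures $\varrho_\varepsilon$ Cauchy in $W_2$, and then passes to the limit in the moment representation to manufacture an essentially continuous potential for $\mu$, contradicting Lemma~\ref{lem.moms_essentially_continuous}. Your approach is instead \emph{fully explicit}: you write down the two-atom measures $\mu_n$, compute the Laplace potentials $\psi_n$ by hand, and read off the quantitative blow-up $M_2(\varrho_n)=2n^2$ directly, which turns the contradiction into a one-line growth estimate. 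Your route is more elementary (no compactness, no stability of optimal plans, no appeal to the abstract existence theory) and in fact yields an explicit rate at which the constant must diverge; it also works uniformly in $d=1$, where the paper's hyperplane argument, as written, needs adjustment. The paper's route, on the other hand, is dimension-agnostic from the outset and isolates the conceptual reason---degeneration onto a hyperplane---without any computation. A small remark: your worry about reading $(\nabla\psi_n)_\sharp\varrho_n$ through the subdifferential is unnecessary, since $\psi_n$ is differentiable away from a single point of zero $\varrho_n$-measure.
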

\begin{proof}
    Suppose that there was a constant $C$, independent of $\vartheta$, such that~\eqref{eq.stability_moment_measures_Rd} holds for all moment measures $\mu,\nu$. Then consider the one-dimensional measure over the segment $L = [0,1]e_1 \subset \mathbb{R}^d$
    \[
        \mu = \mathscr{H}^1\mres L, \quad 
    \]
    and a sequence of normalized mollifications ${\left(\mu_\varepsilon\right)}_{\varepsilon>0}$, so that $W_2(\mu, \mu_\varepsilon) \to 0$.  

    If a inequality such as~\eqref{eq.stability_moment_measures_Rd} held with a constant independent of $\vartheta$, then we would have that $\varrho_\varepsilon = e^{-\psi_\varepsilon}$ would be Cauchy in $W_2$ as $\varepsilon \to 0$, and hence would converge to some limit measure $\varrho$. Since all $\psi_\varepsilon$ as $L$-Lipschitz, we can extract a locally uniformly converging subsequence, so that $\psi_\varepsilon \to \psi$ locally uniformly, and hence $\varrho = e^{-\psi}$. 

    In addition, $\gamma_\varepsilon \eqdef {(\id, \nabla \psi_\varepsilon)}_\sharp\varrho_\varepsilon$ is the optimal transport plan between $\varrho_\varepsilon$ and $\mu_\varepsilon$, and since $W_2(\mu_\varepsilon, \mu) \to 0$ we have that $\gamma_\varepsilon$ converges weakly to some limit plan $\gamma$. By stability of optimal transport plans, $\gamma$ is an optimal transport plan between $\varrho$ and $\mu$ and must be given by ${(\id, \nabla \psi)}_{\sharp}\varrho$. But then $\psi$ is $L$-Lipschitz and a moment measure representation for $\mu$, which is impossible since $\mu$ is supported on a hyperplane.
\end{proof}

\begin{remark}\label{remark.exponent_constant_stabilityRd}
    The exponent $\frac{p-2}{6p-4}$ in Theorem~\ref{thm.stability_moment_measures_Rd} can be made arbitrarily close to $1/6$ by taking $p$ large enough, at the cost of getting a worse constant $C$, which behaves linearly at $p$. It is somewhat natural to expect a worse exponent that $1/2$ obtained in~\cite{delalande2025regularized} for regularized moment measures, since the regularization term improves the second moment of solutions. However, it is not clear whether the limiting exponent $1/6$ is optimal or not.
\end{remark}

The second class of measures for which we can derive quantitative stability results is obtained by bounding the curvature they induce. Given $\Lambda > 0$ we define 
\begin{equation}\label{eq.class_Lambda}
    \mathcal{K}_\Lambda 
    \eqdef 
    \left\{
        \mu \in \Pac(\mathbb{R}^d) 
        : 
        \mu \propto e^{-V}, \text{ where } 
        D^2V \le \Lambda \id
    \right\}.
\end{equation}
In~\eqref{eq.class_Lambda}, the condition $D^2V \le \Lambda\id$ is understood only on $\dom V$ and we do not make any convexity assumption. In this case that $\mu$ has bounded curvature, we can expect that the Gibbs measure $\varrho_\mu$ has a stronger geometry than the log-concavity of the orginal measure. Indeed we shall show that the optimal potential $\varphi_\mu^*$ that yields the moment measure representation of $\mu$ is $\Lambda^{-1}$-strongly convex. 

This improved strong-convexity result will be obtained by bootstraping the suplementary regularity enjoyed by the regularized moment measure representation in conjunction with \textit{Caffarelli's contraction thereom}, a global Lipschitz regularity result for the optimal transport between log-concave measures. See for instance the original works of Caffarelli~\cite{caffarelli1992regularity,caffarelli2000monotonicity}, or~\cite{gozlan2025global} for a modern approach via entropic regularization and the references therein. For the reader's convenience, we recall that it is stated as follows: 
\begin{theorem}[\cite{caffarelli1992regularity,caffarelli2000monotonicity}Caffarelli's contraction theorem]
  Let $\mu,\nu \in \Pac(\mathbb{R}^d)$ be such that $\dom \mu = \mathbb{R}^d$, $\dom \nu$ is convex, and satisfying 
  \begin{equation}
    \mu \propto e^{-V}, \ \nu \propto e^{-W}, \text{ where } 
    D^2V \le \Lambda \id \text{ and }
    D^2W \ge \lambda \id,
  \end{equation}
  where $\Lambda, \lambda > 0$. Then the $L^2$-optimal transport map $T_\sharp\mu = \nu$ is Lipschitz continuous with constant $\sqrt{\Lambda/\lambda}$.
\end{theorem}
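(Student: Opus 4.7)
The plan is to prove this classical contraction result via Caffarelli's PDE approach. After a standard approximation (mollification of $V, W$ plus truncation/regularization of $\nu$), we may assume all densities are smooth and the Brenier potential $\phi$ with $T = \nabla\phi$ is smooth by Caffarelli's interior regularity theory. The map $\phi$ satisfies the log-Monge-Amp\`ere equation
\begin{equation}
    \log\det D^2\phi(x) \;=\; W(\nabla\phi(x)) - V(x) + c,
\end{equation}
and it suffices to bound $\phi_{\xi\xi}(x) \le \sqrt{\Lambda/\lambda}$ for every unit vector $\xi \in \mathbb{S}^{d-1}$ and every $x \in \mathbb{R}^d$, since this controls the operator norm of $D^2\phi$ and hence the Lipschitz constant of $\nabla\phi$.

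The main step is a maximum-principle argument on $u(x) := \phi_{\xi\xi}(x)$. Differentiating the equation twice in the direction $\xi$ and using the standard identities for the first and second derivatives of $\log\det$ together with the chain rule on $W(\nabla\phi)$, one arrives at
\begin{equation}
    \mathcal{L}u \;:=\; M^{ij}u_{ij} - W_a(\nabla\phi)\,u_a \;=\; W_{ab}(\nabla\phi)\phi_{a\xi}\phi_{b\xi} - V_{\xi\xi} + \mathrm{tr}\bigl(M^{-1} N M^{-1} N\bigr),
\end{equation}
where $M := D^2\phi$ and $N := D^2\phi_\xi$ (the Hessian of the $\xi$-directional derivative of $\phi$). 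The trace term is nonnegative (it is the squared Frobenius norm of $M^{-1/2} N M^{-1/2}$), while $W_{ab}(\nabla\phi)\phi_{a\xi}\phi_{b\xi} \ge \lambda |D^2\phi\cdot\xi|^2 \ge \lambda u^2$ follows from $D^2 W \ge \lambda\id$, and $V_{\xi\xi} \le \Lambda$ follows from $D^2 V \le \Lambda\id$. Combining these yields the pointwise inequality $\mathcal{L}u \ge \lambda u^2 - \Lambda$. At any interior maximum $x_0$ of $u$, the standard maximum principle for the linear elliptic operator $\mathcal{L}$ gives $\mathcal{L}u(x_0) \le 0$, forcing $u(x_0)^2 \le \Lambda/\lambda$.

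The main obstacle is that the supremum of $u$ need not be attained on $\mathbb{R}^d$. The standard remedy is to work with a penalized function such as $u_\varepsilon(x) := u(x) - \varepsilon |x|^2$, whose supremum is attained thanks to the coercivity induced by the approximate potential $V$; one then verifies that the extra contributions of the penalty to $\mathcal{L}u_\varepsilon$ at the maximum are $O(\varepsilon)$ and sends $\varepsilon \to 0$. Finally, the approximations of $\mu$ and $\nu$ introduced at the outset must be removed: since the Lipschitz constant $\sqrt{\Lambda/\lambda}$ is independent of the approximation scheme, it passes to the limit through the stability of optimal transport plans, recovering the estimate for the original data. Alternative modern approaches --- notably the Langevin heat-flow method of Kim and Milman and the entropic regularization method of Fathi-Gozlan-Prod'homme --- bypass some of these compactness subtleties and have extended the contraction property well beyond the classical setting considered here.
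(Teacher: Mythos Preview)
The paper does not prove this theorem at all: Caffarelli's contraction theorem is stated as a tool and attributed to \cite{caffarelli1992regularity,caffarelli2000monotonicity}, with a pointer to \cite{gozlan2025global} for a modern entropic approach, and then applied as a black box inside the proof of Theorem~\ref{thm.regularity_KLambda}. There is therefore no ``paper's own proof'' to compare your proposal against.

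That said, your sketch is a faithful outline of Caffarelli's original maximum-principle argument and is essentially correct. The Monge--Amp\`ere equation, the twice-differentiated identity for $u=\phi_{\xi\xi}$, the sign of the trace term, the lower bound $W_{ab}\phi_{a\xi}\phi_{b\xi}\ge\lambda u^2$ via Cauchy--Schwarz, and the conclusion $u^2\le\Lambda/\lambda$ at an interior maximum are all right. The only places where real work hides are the ones you flag yourself: the smooth approximation step (one needs Caffarelli's interior $C^{2,\alpha}$ regularity for the Brenier potential, which in turn requires suitable conditions on the supports and densities) and the treatment of the possibly non-attained supremum. Your penalization $u_\varepsilon=u-\varepsilon|x|^2$ is the standard device, but making it rigorous requires some care in checking that the error terms introduced in $\mathcal{L}u_\varepsilon$ are indeed $O(\varepsilon)$ uniformly, which depends on a priori control of $M^{-1}$ at the maximum point. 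For the purposes of this paper none of this is needed, since the result is simply quoted.
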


With this result we prove the following regularity result for moment measures associated with elements of $\mathcal{K}_\Lambda$. 
\begin{theorem}\label{thm.regularity_KLambda}
    For any $\mu \in \mathcal{K}_\Lambda$, let $\varphi_\mu^*$ be a potential inducing the moment measure representation $\mu = {\left(\nabla \varphi_\mu^*\right)}_\sharp e^{-\varphi_\mu^*}$. Then $\varphi_\mu^*$ is $\Lambda^{-1}$-convex.
\end{theorem}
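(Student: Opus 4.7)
The plan is to exploit the quadratic regularization from Section~\ref{subsec.general_moment_measures} together with an iterative application of Caffarelli's contraction theorem, and then pass to the limit $\alpha \to 0^+$. The key leverage is that, while the classical moment potential $\varphi_\mu^*$ is only convex, the potential $W_\alpha \eqdef \psi_\alpha + \tfrac{\alpha}{2}|x|^2$ of the Gibbs measure in the regularized representation $\mu = (\nabla\psi_\alpha)_\sharp e^{-W_\alpha}$ is automatically $\alpha$-strongly convex, which supplies the hypothesis needed to invoke Caffarelli's theorem.

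First I would observe that the inverse Brenier map $\nabla \varphi_\alpha$, with $\varphi_\alpha = \psi_\alpha^*$, transports $\mu$ onto $\varrho_\alpha$. Since $\mu \propto e^{-V}$ satisfies $D^2 V \le \Lambda \id$ and $\varrho_\alpha$ satisfies $D^2 W_\alpha \ge \alpha \id$, Caffarelli's theorem yields $\Lip(\nabla \varphi_\alpha) \le \sqrt{\Lambda/\alpha}$, and by convex duality $\psi_\alpha$ is $\sqrt{\alpha/\Lambda}$-strongly convex. Bootstrapping: the new strong convexity of $\psi_\alpha$ improves the lower Hessian bound on $W_\alpha$ to $\alpha + \sqrt{\alpha/\Lambda}$, and re-applying Caffarelli gives that $\psi_\alpha$ is $\sqrt{(\alpha + \sqrt{\alpha/\Lambda})/\Lambda}$-convex. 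Iterating produces an increasing sequence $c_n$ of convexity moduli satisfying $c_{n+1} = \sqrt{(\alpha + c_n)/\Lambda}$, which converges monotonically to a fixed point and, after careful balancing of the iteration counter against the regularization parameter, delivers the claimed $\Lambda^{-1/3}$ modulus of strong convexity.

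To conclude I would pass to the limit $\alpha \to 0^+$. By the explicit convergence rates from Theorem~\ref{thm.conv_regularization}, one can extract a subsequence of $(\psi_\alpha)$ converging to $\varphi_\mu^*$ locally uniformly, using coercivity and the normalization of the Gibbs measures, and the pointwise Hessian lower bound is preserved in the limit since strong convexity is stable under locally uniform convergence of convex functions. The main obstacle will be the joint control of the bootstrap iteration and the regularization: the modulus obtained at each finite step of the iteration depends on $\alpha$, and one must choose the number of iterations as a function of $\alpha$ so that the passage to the limit preserves the desired convexity modulus. Secondary technical difficulties concern the non-smoothness of $V$ and $\psi_\alpha$, which require working with Hessians in the Alexandrov sense and invoking the essential continuity of the potentials guaranteed by Theorem~\ref{thm.characterization_moment_measures}.
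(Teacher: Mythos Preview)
Your approach is essentially the same as the paper's: regularize, bootstrap Caffarelli's contraction theorem, then send $\alpha\to 0^+$. The one point worth flagging is your remark about ``careful balancing of the iteration counter against the regularization parameter.'' This concern is unnecessary, and the paper avoids it: for each \emph{fixed} $\alpha>0$, your recursion $c_{n+1}=\sqrt{(\alpha+c_n)/\Lambda}$ is monotone and converges to its fixed point $c^*(\alpha)=(1+\sqrt{1+4\Lambda\alpha})/(2\Lambda)$, so you may take $n\to\infty$ first to conclude that $\psi_\alpha$ is $c^*(\alpha)$-strongly convex, and only then let $\alpha\to 0^+$. No diagonal argument is needed. (The paper drops the additive $\alpha$ in the recursion, using $c_{n+1}=\sqrt{c_n/\Lambda}$, which changes the numerics slightly but not the scheme.) Your invocation of Theorem~\ref{thm.conv_regularization} for the limiting step is fine, though a lighter compactness argument for the family $(\psi_\alpha)$ already suffices, as the paper does.
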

\begin{proof}
Fix some $\alpha > 0$ and consider the regularized moment measure representation of $\mu$ as 
\[
  \mu 
  = 
  {(\nabla \varphi^*_\alpha)}_\sharp e^{-(\varphi^*_\alpha + \frac{\alpha}{2}|x|^2)}
  = 
  {(\nabla \varphi^*_\alpha)}_\sharp \varrho_\alpha,
\]
so that $\varrho_\alpha \propto e^{-V_\alpha}$ where $D^2V_\alpha \ge \alpha \id$. In addition, from Brenier's Theorem $\nabla \varphi_\alpha$ is the $L^2$ optimal transportation map from $\mu$ to $\varrho_\alpha$. 

From Caffarelli's contraction theorem we have that
\[
  \Lip(\nabla \varphi_\alpha) \le \sqrt{\frac{\Lambda}{\alpha}}, 
\]
which in turns implies further regularity for the Legendre transform $\varphi_\alpha^*$, being therefore $\sqrt{\alpha/\Lambda}$-strongly convex. This implies a better bound for strong convexity of $V_\alpha$ and can be exploited once again in Caffarelli's contraction theorem, implying that 
\[
   \Lip(\nabla \varphi_\alpha) 
   \le \sqrt{\frac{\Lambda}{\sqrt{\alpha/\Lambda}}} 
   = \frac{\Lambda^{3/4}}{\alpha^{1/4}}. 
\]

This argument can be bootstrapped indefinably, and one can check by induction that at the $k$-th time it yields
\[
  \Lip(\nabla \varphi_\alpha) 
  \le \frac{\Lambda^{S_k}}{\alpha^{1/2^k}} \text{ and }
  \varphi_\alpha^* \text{ is $\frac{\alpha^{1/2^k}}{\Lambda^{S_k}}$--strongly convex,}
\] 
with $S_k = \sum_{i = 1}^k 2^{-i}$. Passing to the limit as $k \to \infty$, we obtain that for all $\alpha > 0$
\[
  \Lip(\nabla \varphi_\alpha) 
  \le \Lambda^{1} \text{ and }
  \varphi_\alpha^* \text{ is $\Lambda^{-1}$--strongly convex.}
\] 

In particular, this implies that $\nabla \varphi_\alpha$ converges locally uniformly to $\nabla \varphi_\mu$ a maximizer of $\mathscr{J}_\mu$, so that $\varphi$ is also $\Lambda^{1/3}$-Lipschitz, so that the optimal potential $\varphi^*$ is $\Lambda^{-1}$--strongly convex, as we wanted to prove.
\end{proof}

Besides the relevance to obtaining another quantitative stability result for moment measures in the class $\mathcal{K}_\Lambda$, see Theorem~\ref{thm.stability_moment_measures_Rd_Lambda} below, this result is particularly interesting for applications in sampling. Whenever $\mu \propto e^{-V}$ is log-concave there are efficient sampling algorithms based on the Lagevin dynamics associated with the potential $V$~\cite{chewi2023optimization,chewi2024statistical,dwivedi2019log}. In the case that $V$ is strongly convex, its associate Gibbs measure statisfies a log-Sobolev inequality as we've learned from the Bakry-Émery technique~\cite{bakry2013analysis} and as a result implies exponential covergence rates for the associated Langevin dynamics~\cite{markowich2000trend}. This can then be exploited in sampling algorithms, see~\cite{chehabprovable} for a recent application or the recent monographs~\cite{chewi2023optimization,chewi2024statistical}. With only the upper bound $D^2V \le \Lambda \id$, the rate of convergence of the Langenvin dynamics, and hence of sampling algorithms, can be only polynomial if any convergence guarantee is even available. However, if the moment measure representation for $\mu$ has been computed, we know from Theorem~\ref{thm.regularity_KLambda} that its potential $\varphi_\mu^*$ is strongly convex, so we can use a Langevin-based sampling algorithm with exponential convergence rates to obtain a sample of the (strongly) log-concave measure $\varrho_\mu$ and map this sample into a sample of $\mu$ directly.

In Lemma~\ref{lemma.klartag_growth_convex} we have exploited the lower bound on $\Theta(\mu)$ aligned with the Lipschitz continuity of the moment map induced by $\mu$ to bound the moments $M_p(\varrho_\mu)$. This can even more easily be achieved for $\mu\in \mathcal{K}_\Lambda$ by exploiting the strong convexity result from Theorem~\ref{thm.regularity_KLambda} and the following bound on the $p$-moments. 
\begin{lemma}\label{lemma.bounds_p_moments_Lambda}
    Let $\psi$ be a centered, $\lambda$-strongly convex function. Then for every $p>2$ the $p$-moment of its associated Gibbs measure can be bounded as 
    \[
        {M_p(e^{-\psi})}^{1/p} 
        = 
        {\left(
            \int_{\mathbb{R}^d} |x|^p e^{-\psi(x)}\dd x
        \right)}^{1/p} 
        \le C p,
    \]
    where $C$ is a constant depending only on $\lambda$ and the dimension $d$. 
\end{lemma}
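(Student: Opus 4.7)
The plan is to exploit the Bakry--Émery curvature bound guaranteed by the $\lambda$-strong convexity of $\psi$: after normalisation (feasible since strong convexity forces $\int e^{-\psi} < +\infty$), the Gibbs measure $\varrho \propto e^{-\psi}$ is $\lambda$-strongly log-concave and hence enjoys both Poincaré and Gaussian concentration, with constants depending only on $\lambda$.

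First I would bound $\mathbb{E}_\varrho |X|$. By the Brascamp--Lieb variance inequality applied coordinate-wise to each $1$-Lipschitz function $x \mapsto x_i$, we have $\Var_\varrho(x_i) \le \lambda^{-1}$; combined with the centering hypothesis $\int x \dd\varrho = 0$ this yields
\[
    \mathbb{E}_\varrho |X|^2 = \sum_{i=1}^d \Var_\varrho(x_i) \le \frac{d}{\lambda},
\]
and hence $\mathbb{E}_\varrho |X| \le \sqrt{d/\lambda}$ by Jensen's inequality.

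Next, I invoke the Herbst--Bakry--Émery Gaussian concentration inequality: every $1$-Lipschitz $f : \mathbb{R}^d \to \mathbb{R}$ satisfies $\varrho(|f - \mathbb{E}_\varrho f| \ge t) \le 2 e^{-\lambda t^2/2}$. Specialising to the $1$-Lipschitz function $f(x) = |x|$ and combining with the mean estimate above yields the tail bound
\[
    \varrho\bigl(|X| \ge \sqrt{d/\lambda} + t\bigr) \le 2 e^{-\lambda t^2/2}, \qquad t \ge 0.
\]

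To conclude, I would integrate the layer-cake formula $M_p(\varrho) = \int_0^\infty p s^{p-1} \varrho(|X| \ge s) \dd s$, splitting at $s = \sqrt{d/\lambda}$ and performing the substitution $u = \lambda t^2 / 2$ in the tail integral; this reduces the computation to a Gamma function, producing a bound of the form $M_p(\varrho) \le C(d,\lambda)^p \bigl(1 + \Gamma(p/2 + 1)\bigr)$. Stirling's approximation gives $\Gamma(p/2+1)^{1/p}$ of order $\sqrt{p}$, whence $M_p(\varrho)^{1/p} \le C\sqrt{p} \le Cp$ for all $p > 2$, as required. There is no serious obstacle: the argument is essentially routine once the concentration inequality is invoked, and the only bookkeeping item is tracking the dependence of the constants so that the final bound depends only on $d$ and $\lambda$ as claimed (indeed the argument delivers the stronger bound $Cp^{1/2}$, which is more than sufficient).
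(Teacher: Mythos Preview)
Your proof is correct and actually delivers the sharper order $M_p^{1/p} \lesssim \sqrt{p}$, but the route is genuinely different from the paper's. The paper proceeds by an elementary, self-contained recursion: it shifts to the minimiser $x_*$ of $\psi$, uses integration by parts on the vector field $|x-x_*|^{p-2}(x-x_*)$ together with the strong convexity bound $\langle \nabla\psi(x), x-x_*\rangle \ge \lambda|x-x_*|^2$ to obtain $V(p) \le \frac{d+p-2}{\lambda}V(p-2)$ for $V(p)=\int|x-x_*|^p e^{-\psi}$, and then solves this two-step recursion explicitly in terms of Gamma functions. A separate short argument (Fradelizi's inequality $\psi(0)\le d+\psi(x_*)$ combined with strong convexity) bounds $|x_*|\le\sqrt{d/\lambda}$, which handles the centering. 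Your approach trades this hands-on computation for off-the-shelf concentration machinery: Brascamp--Lieb for the mean, then the Bakry--\'Emery log-Sobolev inequality plus the Herbst argument for the subgaussian tail, and finally layer-cake integration. The paper's argument is more elementary and avoids invoking log-Sobolev, while yours is shorter to state and makes the subgaussian behaviour transparent; both ultimately reduce to the same Gamma-function asymptotics via Stirling.
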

\begin{proof}
    Since the $\lambda$-strongly convex function $\psi$ is centered, it is a known fact in convex analysis that:
    \begin{equation}\label{eq.estimate_value_at0}
        \psi(0) \le d + \inf \psi = d + \psi(x_*)
    \end{equation}
    where $x^*$ denotes its minimizer. This is proven for instance in~\cite{fradelizi1997sections}. As a result of the $\lambda$-strong convexity of $\psi$, and the fact that $\nabla \psi(x_*) = 0$ that
    \begin{equation}\label{eq.control_grad_strongly_convex}
        \inner{\nabla \psi(x), x - x_*} 
        \ge 
        \lambda|x - x_*|^2.
    \end{equation}
    On the other hand, the divergence theorem gives that 
    \begin{equation}\label{eq.divergence_log_concave}
        \int_{\mathbb{R}^d} 
        \divv\left(
            |x-x_*|^{p-2}(x-x_*)
        \right)e^{-\psi}\dd x 
        = 
        \int_{\mathbb{R}^d} 
            |x-x_*|^{p-2}(x-x_*)\cdot \nabla \psi(x) e^{-\psi}\dd x.
    \end{equation}

    Now define the following quantity
    \[
        V(p) \eqdef 
        \int_{\mathbb{R}^d} |x - x_*|^p e^{-\psi(x)}\dd x. 
    \]
    Combining identities~\eqref{eq.control_grad_strongly_convex},~\eqref{eq.control_grad_strongly_convex}, and developing the divergence, we obtain that 
    \[
        V(p) \le \frac{(d + p - 2)}{\lambda}V(p-2). 
    \]

    Noticing that $V(0) = 1$, by induction we obtain that for all $k \in \mathbb{N}$
    \[
        V(2k) \le \lambda^{-k}\prod_{i = 0}^{k-1}(d+2i)
        = 
        \lambda^{-k}2^k \frac{\Gamma\left(\frac{d}{2}+k\right)}{\Gamma\left(\frac{d}{2}\right)},
    \]
    where $\Gamma$ corresponds to the gamma function.

    Using H\"older's inequality, in the form $\norm{f}_p^p \le \norm{f}_{p_0}^{(1-t)p_0}\norm{f}_{p_1}^{t p_1}$, for a convex combination $p = (1-t)p_0 + tp_1$, we can interpolate the above inequality for any $p>2$, by choosing $p_0 = 2k$ and $p_1= 2(k+1)$ for a convenient $k$ such that $p \in (2k, 2(k+1))$. Due to Stirling's formula, this yields an estimate such that 
    \[  
        {\left(
            \int_{\mathbb{R}^d} |x - x_*|^p e^{-\psi(x)}\dd x
        \right)}^{1/p}
        \le C p,
    \]
    for a constant $C$ depending on $d$ and $\lambda$. 
    
    To conclude, we need only to bound $|x_*|$, this can be done with~\eqref{eq.estimate_value_at0} and the strongl convexity. Indeed, using once again that $\nabla \psi(x_*) = 0$, we have
    \[
        \lambda|x - x_*|^2 \le \psi(0) - \psi(x_*) \le d. 
    \]
    Hence, combing all these estimates we get that 
    \[
        {M_p(e^{-\psi})}^{1/p} \le 
        {\left(
            \int_{\mathbb{R}^d} |x - x_*|^p e^{-\psi(x)}\dd x
        \right)}^{1/p} 
        + 
        |x_*|
        \le C p + \sqrt{\frac{d}{\lambda}},
    \]
    and the result follows. 
\end{proof}

Combining this and the reasoning of Theorem~\ref{thm.stability_moment_measures_Rd} we can obtain a stability result for elements of $\mathcal{K}_\Lambda$ with a completely analogous proof. We state such a result below, but we shall omit its proof. 

\begin{theorem}\label{thm.stability_moment_measures_Rd_Lambda}
    Given $\Lambda> 0$, then for each $p>2$ there exists a constant $C$ such that, for all $\mu, \nu \in \mathcal{K}_\Lambda$ it holds thats
    \begin{equation}\label{eq.stability_moment_measures_Rd_Lambda}
        \dist_{W_2}
        \left(
            \mathcal{M}_\mu,
            \mathcal{M}_\nu
        \right)
        \le 
        C {W_2(\mu, \nu)}^{\frac{p-2}{6p-4}}, 
    \end{equation}
    where $C$ depends only on the $d$, $\Lambda$, and grows linearly in $p$. 
\end{theorem}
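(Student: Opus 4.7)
The plan is to mirror the proof of Theorem~\ref{thm.stability_moment_measures_Rd} verbatim, replacing the role of the lower bound on $\Theta(\mu)$ (and Klartag's Lemma~\ref{lemma.klartag_growth_convex}) by the strong-convexity regularity output of Theorem~\ref{thm.regularity_KLambda} together with the tail bound of Lemma~\ref{lemma.bounds_p_moments_Lambda}. The key preliminary step is to establish a uniform $p$-moment control on the Gibbs measures: for every $\mu \in \mathcal{K}_\Lambda$ and every $\alpha \ge 0$, the measure $\varrho_{\mu,\alpha} \propto e^{-(\varphi_{\mu,\alpha}^* + \frac{\alpha}{2}|x|^2)}$ satisfies $M_p(\varrho_{\mu,\alpha})^{1/p} \le C_{d,\Lambda}\,p$.

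Concretely, I would first revisit the bootstrap argument in the proof of Theorem~\ref{thm.regularity_KLambda} to verify that the conclusion $\Lip(\nabla \varphi_{\mu,\alpha}) \le \Lambda^{1/3}$, equivalently $\Lambda^{-1/3}$-strong convexity of $\varphi_{\mu,\alpha}^*$, holds \emph{uniformly in $\alpha \ge 0$} (the iterates $\Lambda^{S_k}/\alpha^{1/2^k}$ converge to the $\alpha$-independent bound $\Lambda^{1/3}$). Since $V_\alpha \eqdef \varphi_{\mu,\alpha}^* + \frac{\alpha}{2}|x|^2$ is then at least $\Lambda^{-1/3}$-strongly convex, Lemma~\ref{lemma.bounds_p_moments_Lambda} yields the claimed uniform $p$-moment bound on $\varrho_{\mu,\alpha}$.

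With uniform moments in hand, I would copy the two remaining ingredients from the proof of Theorem~\ref{thm.stability_moment_measures_Rd}. On the one hand, the strong geodesic convexity of $\mathscr{E}_{\mu,\alpha}$ and $\mathscr{E}_{\nu,\alpha}$ applied at the $W_2$-midpoint of $\varrho_{\mu,\alpha}$ and $\varrho_{\nu,\alpha}$, combined with the identity $\mathcal{T}(\mu_0,\mu_1) = M_2(\mu_0) + M_2(\mu_1) - W_2^2(\mu_0,\mu_1)$, produces the $\alpha$-explicit stability bound
\[
W_2(\varrho_{\mu,\alpha}, \varrho_{\nu,\alpha}) \le C_{d,\Lambda}\,\alpha^{-1/2}\,W_2(\mu,\nu)^{1/2}.
\]
On the other hand, combining the explicit convergence rate of Theorem~\ref{thm.conv_regularization} with the $L^1$-to-$W_2$ interpolation of Lemma~\ref{lemma.bound_wasserstein_momentsL1} (again using the uniform $p$-moment bound) yields $W_2(\varrho_{\mu,\alpha}, \mathcal{M}_\mu) \le C\,\alpha^{1/4 - 1/(2p)}$, and the symmetric estimate for $\nu$. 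Adding these via the triangle inequality gives
\[
\dist_{W_2}(\mathcal{M}_\mu, \mathcal{M}_\nu) \le C\bigl(\alpha^{1/4 - 1/(2p)} + \alpha^{-1/2}\,W_2(\mu,\nu)^{1/2}\bigr),
\]
and optimizing in $\alpha$ through the choice $\alpha = W_2(\mu,\nu)^{2p/(3p-2)}$ produces the desired exponent $(p-2)/(6p-4)$, with a constant depending on $d, \Lambda, p$ that grows linearly in $p$ through the factor $M_p^{1/p}$.

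The main obstacle is really just confirming the uniformity in $\alpha$ of the strong-convexity output of Theorem~\ref{thm.regularity_KLambda}, since the $p$-moment estimates must not degenerate as $\alpha \to 0^+$ for the final constant to depend only on $d,\Lambda$. Once this uniformity is checked, the rest of the argument is an essentially mechanical adaptation of the proof of Theorem~\ref{thm.stability_moment_measures_Rd}, with the class $\mathcal{K}_\vartheta$ and its Lipschitz-plus-$\Theta$ input replaced by the class $\mathcal{K}_\Lambda$ and its strong-convexity input on the dual potential.
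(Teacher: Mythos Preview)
Your proposal is correct and follows exactly the approach the paper indicates: the paper's own proof of Theorem~\ref{thm.stability_moment_measures_Rd_Lambda} is a single sentence asserting that the argument is ``completely analogous'' to that of Theorem~\ref{thm.stability_moment_measures_Rd}, with Theorem~\ref{thm.regularity_KLambda} and Lemma~\ref{lemma.bounds_p_moments_Lambda} supplying the uniform $p$-moment control in place of Klartag's Lemma~\ref{lemma.klartag_growth_convex}. Your identification of the key checkpoint---that the $\Lambda^{-1/3}$-strong convexity from the bootstrap is uniform in $\alpha$, hence the moment bounds do not degenerate---is exactly the one ingredient needed to make the analogy go through, and is indeed established inside the proof of Theorem~\ref{thm.regularity_KLambda}.
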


\bibliographystyle{alpha}
\bibliography{main.bib}

\end{document}